\numberwithin{equation}{section}
\theoremstyle{plain}
\newtheorem{theorem}{Theorem}[section]
\newtheorem{lemma}[theorem]{Lemma}
\newtheorem{proposition}[theorem]{Proposition}
\newtheorem{corollary}[theorem]{Corollary}
\newtheorem{definitiontheorem}[theorem]{Definition-Theorem}
\theoremstyle{definition}
\newtheorem{definition}[theorem]{Definition}
\newtheorem{example}[theorem]{Example}
\newtheorem{notation}[theorem]{Notation}
\newtheorem{remark}[theorem]{Remark}
\newcommand{\stkout}[1]{\ifmmode\text{\sout{\ensuremath{#1}}}\else\sout{#1}\fi}
\let\c@equation\c@theorem  
\newcommand{\C}{\mathcal{C}}
\newcommand{\M}{\mathcal{M}}
\newcommand{\id}{\textnormal{id}}
\newcommand{\kk}{\Bbbk}
\newcommand{\vecgw}{{\sf Vec}_G^\omega}
\newcommand{\unit}{\mathbbm{1}}
\newcommand{\Hom}{\textnormal{Hom}}
\definecolor{forest}{rgb}{0.0, 0.5, 0.0}
\definecolor{brandeisblue}{rgb}{0.0, 0.2, 1.0}
\definecolor{blue-violet}{rgb}{0.54, 0.17, 0.89}
\definecolor{cyan(process)}{rgb}{0.0, 0.6, 1.0}
\begin{document}

\title[Algebraic structures in group-theoretical fusion categories]{Algebraic structures in group-theoretical fusion categories}
\author[Morales et al.]{Yiby Morales, Monique M\"{u}ller, Julia Plavnik, Ana Ros Camacho, Angela Tabiri, and Chelsea Walton$^*$}

\address{Morales: Departamento de Matem\'{a}ticas, Universidad de los Andes, Cra. 1 \#18a 12, Bogot\'{a}, Colombia}
\email{yk.morales964@uniandes.edu.co}

\address{M\"{u}ller: Departamento de Matem\'atica e Estat\'istica, Universidade Federal de S\~ao Jo\~ao del-Rei, Pra\c ca Frei Orlando, 170, Centro, S\~ao Jo\~ao del-Rei, Minas Gerais, Brazil, CEP: 36307-352 }
\email{monique@ufsj.edu.br}

\address{Plavnik: Department of Mathematics, Indiana University, Bloomington, IN 47405 USA}
\email{jplavnik@iu.edu}

\address{Ros Camacho: School of Mathematics, Cardiff University, Senghennydd Road, CF24 4AG Cardiff, Wales}
\email{roscamachoa@cardiff.ac.uk}

\address{Tabiri: African Institute for Mathematical Sciences Ghana, Summerhill Estates, GPS: GK- 0647-1372, Accra, Ghana}
\email{angela@aims.edu.gh}

\address{Walton: Department of Mathematics, Rice University, Houston, TX 77005 USA}
\email{notlaw@rice.edu}

\bibliographystyle{abbrv}

\begin{abstract}
It was shown by Ostrik (2003) and Natale (2017) that a collection of twisted group algebras in a pointed fusion category serve as explicit Morita equivalence class representatives of indecomposable, separable algebras in such categories. We generalize this result by constructing explicit Morita equivalence class representatives of indecomposable, separable algebras in group-theoretical fusion categories. This is achieved by providing the free functor $\Phi$ from  fusion category to a category of bimodules in the original category with a (Frobenius) monoidal structure. Our algebras of interest are then constructed as the image of twisted group algebras under $\Phi$. We also show that twisted group algebras admit the structure of Frobenius algebras in a pointed fusion category, and as a consequence, our algebras are Frobenius algebras in a group-theoretical fusion category. They also enjoy several good algebraic properties. 
\end{abstract}
\subjclass[2010]{18D10, 16D90, 16H05}

\keywords{free functor, Frobenius algebra, Frobenius monoidal functor, group-theoretical fusion category, Morita equivalence, pointed fusion category, separable algebra\\
\indent * Corresponding author}

\maketitle



\allowdisplaybreaks

\section{Introduction}

The goal of this work is to construct explicit algebras that represent Morita equivalence classes in group-theoretical fusion categories, and that possess good algebraic properties. Throughout, we assume that $\kk$ is an algebraically closed field of characteristic 0.

A group-theoretical fusion category is a certain kind of semisimple monoidal category whose construction depends on group-theoretic data, and we will restrict our attention to such categories below. But for now let us discuss the prevalence of Morita equivalence of algebras  in general. Recall that two rings are said to be {\it Morita equivalent} if their categories of modules are equivalent as categories. Many nice properties are preserved under such an equivalence including the Noetherian, (semi)simple, (semi)hereditary, and (semi)prime conditions \cite[Chapter~7]{Lam}. The notion of Morita equivalence has been upgraded for algebras of various types, and is used in several areas  including $C^*$-algebras \cite{BGR}, Poisson geometry \cite{Xu}, and various subfields of physics \cite{DN, FRS, SW}. In all of these cases, one is studying the Morita equivalence of algebras (or, of algebra objects) in a fixed monoidal category.

 Two algebras in $\C$ are said to be {\it Morita equivalent} if their categories of (right) modules in $\C$ are equivalent as (left) $\C$-module categories.  For a {\it fusion category} $\C$ the main result of \cite{OstrikTG2003} states that any $\C$-module category $\mathcal{M}$ is equivalent to the category of modules over some algebra $A$ in $\C$, and the algebra $A$ used in the proof of this result is an {\it internal End} of any nonzero object of $\mathcal{M}$ (see \cite[Section~3.2]{OstrikTG2003}). It is also shown that this internal End, $A$, can be taken to be  connected [Definition~\ref{def:algprop}], but no other good algebraic properties of $A$ are established nor is the construction of $A$ explicit. In our work, we restrict our attention to certain types of fusion categories that depend on group-theoretic data and we produce Morita equivalence representatives of algebras in these categories that depend explicitly on this group-theoretic data. An important class of fusion categories are pointed fusion categories, that is,  the categories $\vecgw$, with $G$ a finite group and $\omega \in H^3(G, \kk^\times)$, consisting of  $G$-graded $\kk$-vector spaces with associativity constraint $\omega$. The simple objects of $\vecgw$ are 1-dimensional $\kk$-vector spaces, denoted by $\{\delta_g\}_{g \in G}$, with $G$-grading $(\delta_g)_x = \delta_{g,x}\; \kk$, for $g,x \in G$.

\begin{definition} Let $L$ be a subgroup of $G$ so that $\omega|_{L^{\times 3}}$ is trivial, and take a 2-cochain $\psi \in C^2(L, \kk^\times)$ so that $d\psi = \omega|_{L^{\times 3}}$. The {\it twisted group algebra} $A(L,\psi)$ in $\vecgw$ is $\bigoplus_{g \in L} \delta_g$, with multiplication given by $\delta_g \otimes \delta_{g'} \mapsto \psi(g,g') \delta_{gg'}.$
\end{definition}

We have the following construction and result due to work of V. Ostrik and work of S. Natale.

\begin{theorem}\cite[Example~2.1]{OstrikIMRN2003} \cite[Example~9.7.2]{EGNO} \cite{Natale2016}
\label{thm:OstNat-intro}
A collection of twisted group algebras $A(L,\psi)$ serve as Morita equivalence class representatives of  indecomposable, separable algebras in the pointed fusion category $\vecgw$. \qed
\end{theorem}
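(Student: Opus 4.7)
The plan is to reduce the statement to Ostrik's classification of indecomposable semisimple module categories over a pointed fusion category, via the general correspondence between Morita equivalence classes of indecomposable separable algebras in a fusion category $\C$ and equivalence classes of indecomposable semisimple left $\C$-module categories. Under this correspondence, a module category $\M$ with a chosen simple generator $X$ is sent to the Morita class of the internal End $\underline{\textnormal{End}}_\M(X)$, and conversely every indecomposable separable algebra $A$ in $\C$ determines the $\C$-module category of right $A$-modules in $\C$, with the Morita class of $A$ recoverable from this module category.

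First I would invoke Ostrik's classification: indecomposable semisimple $\vecgw$-module categories are parameterized, up to equivalence, by conjugacy classes of pairs $(L,\psi)$, where $L \leq G$ is a subgroup such that $\omega|_{L^{\times 3}}$ is cohomologically trivial, and $\psi \in C^2(L,\kk^\times)$ satisfies $d\psi = \omega|_{L^{\times 3}}$, modulo $2$-coboundaries. The associated module category $\M(L,\psi)$ can be described concretely in terms of $\psi$-projective representations of $L$ together with the $G$-action on $G/L$. Next I would identify $\M(L,\psi)$ with the category of right $A(L,\psi)$-modules in $\vecgw$ by exhibiting $A(L,\psi)$ as an internal End: the formula $\delta_g \otimes \delta_{g'} \mapsto \psi(g,g')\delta_{gg'}$ is associative in $\vecgw$ precisely because $d\psi = \omega|_{L^{\times 3}}$, so $A(L,\psi)$ is a well-defined algebra in $\vecgw$; its separability and indecomposability then transfer from the semisimplicity and indecomposability of $\M(L,\psi)$.

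The main obstacle I foresee is the explicit identification of $A(L,\psi)$ with the internal End of a canonical simple object $X \in \M(L,\psi)$; this requires unpacking the adjunction defining $\underline{\textnormal{End}}_\M(X)$ in the pointed setting and verifying that the induced multiplication on $\bigoplus_{g \in L} \delta_g$ is indeed given by the cocycle $\psi$. Once this identification is in hand, distinctness of the $A(L,\psi)$ up to Morita equivalence follows from the distinctness of the $\M(L,\psi)$ up to equivalence via Ostrik's classification, and exhaustiveness of the list follows from the fact that every indecomposable separable algebra in $\vecgw$ is Morita equivalent to its own internal End. This strategy aligns with Natale's treatment in \cite{Natale2016}, which supplies an alternative group-cohomological description of these Morita representatives and which I would cite to fill in the details of any routine calculation of cocycle twists.
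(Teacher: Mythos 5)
Your proposal is correct and follows the same route as the sources the paper cites for this result: the paper itself gives no proof of Theorem~\ref{thm:OstNat-intro} (it is quoted from Ostrik and Natale, and restated as Theorem~\ref{thm:OstNat-new}), and your sketch --- classify indecomposable semisimple $\vecgw$-module categories by conjugacy classes of pairs $(L,\psi)$, realize $\M(L,\psi)$ as $(\vecgw)_{A(L,\psi)}$ via an internal End computation, and transfer indecomposability and separability through the algebra--module-category correspondence --- is precisely the argument of \cite[Example~2.1]{OstrikIMRN2003} refined by \cite{Natale2016} for the equivalence relation on pairs. No gaps to report.
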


The first of our results is that we establish a Frobenius algebra structure on the twisted group algebras and study related algebraic properties. See Definition~\ref{def:algprop} for a description of  some properties mentioned for algebras in fusion categories.

\begin{proposition}[Propositions~\ref{prop:ALpsi} and~\ref{prop:Alpsi props}] \label{prop:ALpsi-intro}
The twisted group algebras $A(L,\psi)$ admit the structure of a Frobenius algebra in $\vecgw$. They are also  connected and special.  \qed
\end{proposition}

Now we turn our attention to {\it group-theoretical fusion categories}. Introduced by P. Etingof, D. Nikshych, and V. Ostrik in \cite[Section~8.8]{ENO}, these are the categories $\mathcal{C}(G,\omega,K,\beta)$ consisting of $A(K,\beta)$-bimodules in $\vecgw$, for $G$ and $\omega$ as above, and with $K$ a subgroup of $G$ so that $\omega|_{K^{\times 3}}$ is trivial, and $\beta \in C^2(K, \kk^\times)$ so that $d\beta = \omega|_{K^{\times 3}}$. (See also \cite[Section~9.7]{EGNO}.) Group-theoretical fusion categories are a vital part of  the classification program of  fusion categories (see, e.g., \cite[Theorem~9.2]{ENO-wgtfc} and \cite[Section~9.13]{EGNO}), and due to their explicit construction, they also serve as a go-to testing ground for results about fusion categories (see, e.g.,  \cite[Section~5]{EKW}, \cite[Corollary~4.4]{ERW}, \cite{GP}, \cite[Section~4]{Gelaki}, \cite{Natale-FS}, \cite{OstrikIMRN2003}).

Towards our goal of constructing nice Morita equivalence class representatives of algebras in group-theoretical fusion categories, we start  by considering in a more general setting the free functor $\Phi$ from a fusion category $\C$ to a category of bimodules in $\C$, and  endow this functor with further structure (see Definition~\ref{def:monfunc}).

\begin{theorem}[Theorem~\ref{thm:Phi}] \label{thm:Phi-intro}
Let $\C$ be a fusion category, and let $A$ be a special Frobenius algebra in $\C$. Let ${}_A \C_A$ denote the monoidal category of $A$-bimodules in~$\C$. Then, the  free  functor $\Phi: \C \to {}_A \C_A$  is Frobenius monoidal. \qed
\end{theorem}

The result above enables us to define  algebraic structures that will fulfill our~goal.

\begin{definitiontheorem}[Definition~\ref{def:tha}, Theorem~\ref{thm:AKL}] Using the functor $\Phi$ above in the case when $\C = \vecgw$ and $A = A(K,\beta)$, we define the \textnormal{twisted Hecke algebra} $A^{K,\beta}(L,\psi)$ to be the algebra $\Phi(A(L,\psi))$ in $\C(G,\omega, K, \beta)$. It admits the structure of a  Frobenius algebra in $\C(G,\omega, K, \beta)$. \qed
\end{definitiontheorem}

The terminology is due to the fact that simple objects of group-theoretical fusion categories $\mathcal{C}(G,\omega,K,\beta)$ are in part parameterized by $K$-double cosets in $G$ (see Lemma~\ref{lem:simple-gt}), and the multiplication of $A^{K,\beta}(L,\psi)$ is twisted by cocycles $\beta$ and $\psi$. Twisted Hecke algebras also enjoy several nice algebraic properties.

\begin{proposition}[Proposition~\ref{prop: AKL props}] \label{prop:tha-intro}
The twisted Hecke algebras $A^{K,\beta}(L,\psi)$ are indecomposable, separable algebras in $\mathcal{C}(G,\omega,K,\beta)$, and are special Frobenius. \qed
\end{proposition}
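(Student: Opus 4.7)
The plan is to transfer the Frobenius, special, separable, and indecomposability properties from $A(L,\psi)$ in $\vecgw$, established in Proposition~\ref{prop:ALpsi-intro}, to $A^{K,\beta}(L,\psi)=\Phi(A(L,\psi))$ in $\mathcal{C}(G,\omega,K,\beta)$, using the Frobenius monoidal structure on $\Phi$ supplied by Theorem~\ref{thm:Phi-intro} (applied to $A=A(K,\beta)$, which is special Frobenius in $\vecgw$ by Proposition~\ref{prop:ALpsi-intro}).

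The Frobenius algebra structure on $A^{K,\beta}(L,\psi)$ is a formal consequence of the general fact that a Frobenius monoidal functor $F$ sends a Frobenius algebra $(B,m,\eta,\Delta,\epsilon)$ to a Frobenius algebra with structure maps $F(m)\circ F_2$, $F(\eta)\circ F_0$, $F^2\circ F(\Delta)$, and $F^0\circ F(\epsilon)$; this is already subsumed by the Definition-Theorem. For specialness, I would verify by direct computation that $\Phi$ is ``separable'' as a Frobenius monoidal functor, namely $\Phi_2\circ\Phi^2 = c_1\,\id$ and $\Phi^0\circ\Phi_0 = c_2\,\id$ for nonzero $c_1,c_2\in\kk$. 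Since the structure maps $\Phi_0,\Phi_2,\Phi^0,\Phi^2$ constructed in Theorem~\ref{thm:Phi-intro} are built from the (co)multiplication and (co)unit of $A(K,\beta)$, these two identities reduce directly to the specialness of $A(K,\beta)$. Specialness of $\Phi(A(L,\psi))$ then follows from $m_{\Phi(B)}\circ\Delta_{\Phi(B)} = \Phi(m_B)\circ(\Phi_2\circ\Phi^2)\circ\Phi(\Delta_B) = c_1\,\Phi(m_B\circ\Delta_B) = c_1\lambda\,\id_{\Phi(B)}$, using specialness of $B=A(L,\psi)$; the unit-counit identity is analogous. Separability is then automatic: a special Frobenius algebra in our $\kk$-linear semisimple setting has $m\circ\Delta$ equal to a nonzero scalar times the identity, so a rescaling of $\Delta$ provides a bimodule-linear section of $m$.

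The main obstacle is indecomposability, since this property does not formally follow from the behavior of $\Phi$ on structure maps. I would argue via the algebra/module category correspondence. The category of right $A(L,\psi)$-modules in $\vecgw$ is an indecomposable $\vecgw$-module category (by indecomposability of $A(L,\psi)$ in Theorem~\ref{thm:OstNat-intro}). Since $\mathcal{C}(G,\omega,K,\beta)= {}_{A(K,\beta)}\vecgw_{A(K,\beta)}$ is Morita equivalent to $\vecgw$, there is an induced $2$-equivalence of their module categories, and under this equivalence the module category of $A(L,\psi)$ corresponds to that of $A^{K,\beta}(L,\psi)=\Phi(A(L,\psi))$, essentially by construction of $\Phi$ as the free functor into $A$-bimodules. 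Indecomposability of the resulting $\mathcal{C}(G,\omega,K,\beta)$-module category then forces $A^{K,\beta}(L,\psi)$ to be indecomposable as an algebra. An alternative direct route is to use the adjunction between $\Phi$ and the forgetful functor $U$ to identify the space of $A^{K,\beta}(L,\psi)$-bimodule endomorphisms of $A^{K,\beta}(L,\psi)$ with the corresponding one-dimensional endomorphism space of $A(L,\psi)$ as an $A(L,\psi)$-bimodule in $\vecgw$.
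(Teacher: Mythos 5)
Your treatment of the Frobenius, special, and separable properties is correct, and for specialness it takes a genuinely different (and cleaner) route than the paper. The paper proves specialness by brute force: it plugs the explicit cocycle formulas for $m$, $u$, $\Delta$, $\varepsilon$ from Theorem~\ref{thm:AKL} into $m\Delta$ and $\varepsilon u$ and simplifies using \eqref{eq:omega} and \eqref{eq:theta}. You instead observe that $\Phi$ is a separable Frobenius monoidal functor: since $\underline{\alpha}$ and $\underline{\alpha'}$ are mutually inverse, the composite $\Phi_{X,X'}\,\Phi^{X,X'}=\widetilde{\Phi}_{X,X'}\,\widetilde{\Phi}^{X,X'}$ collapses to $\varepsilon_A m_A \Delta_A u_A = \varepsilon_A u_A = |K|\,\id$, and $\Phi^0\Phi_0 = m_A\Delta_A = \id_A$; combined with specialness of $A(L,\psi)$ (Proposition~\ref{prop:Alpsi props}) this gives $m\Delta$ and $\varepsilon u$ as nonzero scalars, and a rescaling of $\Delta$ (which the paper also performs, via the $|K|^{-1}$ normalization in Theorem~\ref{thm:AKL}) finishes the argument. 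This is the same mechanism the paper itself uses in Proposition~\ref{prop:EndFrob} for $\Phi(\unit)$, so it is entirely consistent with the machinery available, and it buys you a computation-free proof. Separability from specialness is Remark~\ref{rmk:properties}(d) in both treatments.

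The gap is in indecomposability, which is exactly the property the paper does \emph{not} obtain functorially. The paper proves it by a direct structural analysis (Lemma~\ref{lem:AKLsubalg}): any subalgebra of $A^{K,\beta}(L,\psi)$ must contain the summand $S(g)$ for every $g\in L$, and any two such summands intersect nontrivially, so no decomposition into proper subalgebras exists. Your route instead needs the claim that, under the canonical bijection between module categories over $\vecgw$ and over $\C(G,\omega,K,\beta)={}_{A}(\vecgw)_A$, the category $(\vecgw)_{A(L,\psi)}$ is sent to $({}_A(\vecgw)_A)_{\Phi(A(L,\psi))}$ — equivalently, that $({}_A\C_A)_{\Phi(B)}\simeq {}_A\C_B$ as ${}_A\C_A$-module categories. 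This does not follow ``essentially by construction'' of the free functor: the paper's Morita-preservation results (Theorem~\ref{thm:Morita-updown} and Corollary~\ref{cor:Moritapres}) only control Morita \emph{classes}, not which specific module category $\Phi(B)$ represents, and the paper's own logic runs in the opposite direction (Theorem~\ref{thm:modtha-new}(b) \emph{uses} indecomposability of $A^{K,\beta}(L,\psi)$, via Proposition~\ref{prop:indssCA}, to conclude indecomposability of $\M^{K,\beta}(L,\psi)$). Your ``alternative direct route'' has the same problem: the adjunction $\Hom_{{}_A\C_A}(\Phi(X),M)\cong\Hom_{\C}(X,U(M))$ lives at the level of underlying objects and does not identify $\Phi(B)$-\emph{bimodule} endomorphisms of $\Phi(B)$ with $B$-bimodule endomorphisms of $B$; indeed Proposition~\ref{prop:non-connected} shows that Hom spaces out of the unit can genuinely grow under $\Phi$, so some such claim must actually be proved rather than asserted. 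The identification you need is very likely true, but establishing it is comparable in effort to the paper's Lemma~\ref{lem:AKLsubalg}, so as written this part of the proposal is incomplete.
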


We provide a precise condition describing when twisted Hecke algebras are connected in Proposition~\ref{prop:non-connected}. 
Finally, our goal is achieved as follows.

\begin{theorem}[Theorem~\ref{thm:modtha-new}]
\label{thm:modtha-new-intro}
A collection of  twisted Hecke algebras $A^{K,\beta}(L,\psi)$ serve as  Morita equivalence class representatives of indecomposable, separable algebras in the group-theoretical fusion category $\mathcal{C}(G,\omega,K,\beta)$. \qed
\end{theorem}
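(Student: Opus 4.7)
The strategy is to transfer the Ostrik--Natale classification (Theorem~\ref{thm:OstNat-intro}) from $\vecgw$ to $\mathcal{C}(G,\omega,K,\beta)$ via the fusion-category Morita equivalence between them, and then to identify the transferred representatives explicitly as the twisted Hecke algebras. Since $A(K,\beta)$ is indecomposable and separable in $\vecgw$ by Proposition~\ref{prop:ALpsi-intro}, the identification $\mathcal{C}(G,\omega,K,\beta) = {}_{A(K,\beta)}(\vecgw)_{A(K,\beta)}$ exhibits $\mathcal{C}(G,\omega,K,\beta)$ as the dual of $\vecgw$ with respect to the indecomposable module category of right $A(K,\beta)$-modules in $\vecgw$, making the two fusion categories Morita equivalent. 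Standard 2-categorical Morita theory then produces a 2-equivalence between the bicategories of left module categories over $\vecgw$ and over $\mathcal{C}(G,\omega,K,\beta)$.

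In any fusion category $\mathcal{D}$, indecomposable (finite, semisimple) module categories are classified up to equivalence by Morita equivalence classes of indecomposable separable algebras in $\mathcal{D}$, via the internal-End construction of Ostrik recalled in the introduction. Combined with the 2-equivalence above, this yields a bijection between Morita classes of indecomposable separable algebras in $\vecgw$ and in $\mathcal{C}(G,\omega,K,\beta)$. By Theorem~\ref{thm:OstNat-intro}, the $\vecgw$-side classes are exhausted by the twisted group algebras $A(L,\psi)$, and therefore the $\mathcal{C}(G,\omega,K,\beta)$-side classes are labelled by the same data $(L,\psi)$.

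It remains to identify the representative assigned to each pair $(L,\psi)$ as $A^{K,\beta}(L,\psi) = \Phi(A(L,\psi))$. The Frobenius monoidal structure on $\Phi$ from Theorem~\ref{thm:Phi-intro} guarantees that $\Phi$ sends the algebra $A(L,\psi)$ to an algebra $A^{K,\beta}(L,\psi)$ in $\mathcal{C}(G,\omega,K,\beta)$ and, through its lax-monoidal part, sends right $A(L,\psi)$-modules in $\vecgw$ to right $A^{K,\beta}(L,\psi)$-modules in $\mathcal{C}(G,\omega,K,\beta)$; this induction produces a $\mathcal{C}(G,\omega,K,\beta)$-module functor between the respective module categories that realises the abstract 2-equivalence above. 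Since $A^{K,\beta}(L,\psi)$ is indecomposable and separable by Proposition~\ref{prop: AKL props}, it represents the Morita equivalence class labelled by $(L,\psi)$.

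The main obstacle is precisely this last identification: checking that the induction-along-$\Phi$ functor \emph{is} the functor realising the Morita 2-equivalence on the specific module categories at hand, so that the representative produced is $\Phi(A(L,\psi))$ itself rather than only an algebra Morita equivalent to it in $\mathcal{C}(G,\omega,K,\beta)$. This compatibility check leans on the explicit form $\Phi(X) = (A(K,\beta) \otimes X) \otimes A(K,\beta)$ as the free $A(K,\beta)$-bimodule on $X$, which is the classical vehicle for transferring algebras along the fusion-category Morita equivalence between $\vecgw$ and $\mathcal{C}(G,\omega,K,\beta)$, together with the full Frobenius monoidal structure of $\Phi$ (not merely its lax-monoidal part) to ensure that the induced functor on module categories is a genuine $\mathcal{C}(G,\omega,K,\beta)$-module equivalence.
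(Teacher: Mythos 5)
Your overall strategy is viable, but the step you yourself flag as ``the main obstacle'' is a genuine gap, not a routine compatibility check, and your proposal does not close it. The M\"uger/EGNO 2-equivalence gives an abstract bijection between Morita classes of indecomposable separable algebras in $\vecgw$ and in $\mathcal{C}(G,\omega,K,\beta)$, but nothing in your argument shows that this bijection carries the class of $A(L,\psi)$ to the class of $\Phi(A(L,\psi))$ --- and note that even landing in the right class \emph{up to Morita equivalence} is exactly what is unproven, so the issue is not merely one of on-the-nose identification. Induction along the lax monoidal $\Phi$ does send right $A(L,\psi)$-modules to right $A^{K,\beta}(L,\psi)$-modules, but the resulting functor $\M(L,\psi)\to\M^{K,\beta}(L,\psi)$ goes between module categories over \emph{different} fusion categories, so it is not a $\mathcal{C}(G,\omega,K,\beta)$-module functor, let alone an equivalence realising the 2-equivalence; $\Phi$ itself is far from an equivalence of categories. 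Without this identification you obtain neither that distinct $(L,\psi)\in\mathcal{P}(G,\omega)$ give non-Morita-equivalent twisted Hecke algebras, nor that the $A^{K,\beta}(L,\psi)$ exhaust all classes.

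The paper never proves this compatibility; it routes around it. For distinctness it establishes the biconditional of Theorem~\ref{thm:Morita-updown} directly: $B$ and $B'$ are Morita equivalent in $\C$ if and only if $\Phi(B)$ and $\Phi(B')$ are Morita equivalent in ${}_A\C_A$. The forward direction uses that $\Phi$ is ``Morita preserving'' (Theorem~\ref{thm:Moritapres}, an epimorphism-of-bimodules argument via Proposition~\ref{prop:Morita-bimod}(b)); the converse composes further Morita-preserving functors $\widehat{\Phi}$, $\widehat{U}$, $U$ and then shows by hand, using the matrix-algebra-style equivalence of Proposition~\ref{prop:MoritaMat}, that $U\,\widehat{U}\,\widehat{\Phi}\,\Phi(B)\cong (E\otimes B)\otimes E$ is Morita equivalent to $B$ back in $\C$. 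Combined with the Ostrik--Natale classification this makes $(L,\psi)\mapsto[\M^{K,\beta}(L,\psi)]$ well defined and injective on $\mathcal{P}(G,\omega)$. For exhaustion the paper then invokes M\"uger's bijection only at the level of cardinalities: ${\sf Indec}({\sf Mod}(\mathcal{C}(G,\omega,K,\beta)))$ is a finite set in bijection with $\mathcal{P}(G,\omega)$, and an injection between finite sets of equal size is onto. To repair your argument you must either actually prove that induction along $\Phi$ is compatible with the 2-equivalence, or replace that step with a direct biconditional plus counting argument of this kind.
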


An application of this result to P. Etingof, R. Kinser, and the last author's study of tensor algebras in group-theoretical fusion categories \cite{EKW} is discussed in Remark~\ref{rem:EKW} and Example~\ref{ex:EKW}.

Theorem~\ref{thm:modtha-new-intro} is achieved by introducing the notion of a {\it Morita preserving \linebreak monoidal  functor} [Theorem~\ref{thm:Moritapres}, Definition~\ref{def:Moritapres}] and by establishing the following general result.

\begin{theorem}[Theorem~\ref{thm:Morita-updown}] \label{thm:Morita-updown-intro}
Let $\C$ be a fusion category. Take a connected, special Frobenius algebra $A$ in $\C$, and take algebras $B$, $B'$ in~$\C$.  Consider the monoidal functor $\Phi$ from Theorem~\ref{thm:Phi-intro}. Then, $B$ and $B'$ are Morita equivalent as algebras in~$\C$ if and only if $\Phi(B)$ and $\Phi(B')$ are Morita equivalent as algebras in ${}_A \C_A$. \qed
\end{theorem}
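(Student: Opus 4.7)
The plan is to prove the biconditional by working at the level of invertible bimodules. Recall that two algebras $B, B'$ in a monoidal category are Morita equivalent if and only if there exist a $(B,B')$-bimodule $M$ and a $(B',B)$-bimodule $N$ with $M \otimes_{B'} N \cong B$ as $B$-bimodules and $N \otimes_B M \cong B'$ as $B'$-bimodules. The strategy is to transport such data in both directions using the Frobenius monoidal structure on $\Phi$ established in Theorem~\ref{thm:Phi-intro}, with the specialness of $A$ supplying the key splittings that make the relative tensor product calculations go through.

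For the forward direction, I would apply $\Phi$ to the bimodules $M$ and $N$ witnessing a Morita equivalence in $\C$. The lax monoidal constraints $\Phi_2(X,Y) : \Phi(X) \otimes_A \Phi(Y) \to \Phi(X \otimes Y)$ turn the actions of $B$ and $B'$ on $M$ and $N$ into $A$-bimodule-compatible actions of $\Phi(B)$ and $\Phi(B')$ on $\Phi(M)$ and $\Phi(N)$ inside ${}_A \C_A$. The crucial computation is that $\Phi$ preserves relative tensor products of bimodules, giving a canonical isomorphism
\[
\Phi(M) \otimes_{\Phi(B')} \Phi(N) \; \cong \; \Phi(M \otimes_{B'} N),
\]
which follows from the Frobenius compatibility between $\Phi_2$ and its colax counterpart $\Phi^2$, together with the specialness relation $\mu_A \circ \Delta_A = \id_A$. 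Applying $\Phi$ to the given isomorphisms $M \otimes_{B'} N \cong B$ and $N \otimes_B M \cong B'$ then delivers the required Morita isomorphisms in ${}_A \C_A$.

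For the backward direction, suppose $\Phi(B)$ and $\Phi(B')$ are Morita equivalent in ${}_A \C_A$, witnessed by invertible bimodules $P$ and $Q$. I would descend this data back to $\C$ by restriction along the natural algebra maps $B \to \Phi(B)$ and $B' \to \Phi(B')$ coming from the Frobenius unit constraints of $\Phi$, extracting the underlying $\C$-objects. Rigidity of $\C$ together with the Frobenius structure on $A$ guarantees this descent is well-behaved, and reading the displayed identity in reverse shows that the Morita isomorphisms in ${}_A \C_A$ restrict to the required isomorphisms in $\C$. I expect the clean way to organize this is by verifying that $\Phi$ is a Morita preserving monoidal functor in the sense of Definition~\ref{def:Moritapres}, so that the biconditional follows immediately from Theorem~\ref{thm:Moritapres}.

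The main obstacle will be the backward direction, since a Morita equivalence in ${}_A \C_A$ \emph{a priori} carries extra $A$-bimodule structure that must be shown to be inessential for Morita equivalence in $\C$. The technical crux is establishing the invertibility of the descended bimodules without appealing to a section of $\Phi$ at the level of algebras; this is precisely where specialness of $A$ (rather than just the Frobenius property) is indispensable, because the identity $\mu_A \circ \Delta_A = \id_A$ provides the retraction $\Phi(X) \to X$ that is compatible with the laxators and makes the descent faithful on Morita-theoretic data.
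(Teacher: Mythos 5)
Your forward direction matches the paper's strategy (Theorem~\ref{thm:Moritapres} and Corollary~\ref{cor:Moritapres}), with one caveat: for a lax monoidal functor the canonical comparison $\Phi(M)\otimes_{\Phi(B')}\Phi(N)\to\Phi(M\otimes_{B'}N)$ is a priori only an epimorphism, not an isomorphism. The paper does not assert that $\Phi$ preserves relative tensor products; it produces the epimorphisms $\tau,\mu$ and invokes Proposition~\ref{prop:Morita-bimod}(b), which shows that epimorphisms satisfying the unit-compatibility diagrams $(\ast)$, $(\ast\ast)$ are automatically isomorphisms. Note also that ``Morita preserving'' (Definition~\ref{def:Moritapres}) is by definition only the forward implication, so verifying it for $\Phi$ cannot yield the biconditional ``immediately'' as you suggest at the end of your second paragraph.

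The genuine gap is in your backward direction. Restricting an invertible $(\Phi(B),\Phi(B'))$-bimodule $P$ along algebra maps $B\to U\Phi(B)$ and $B'\to U\Phi(B')$ does not transport the Morita data: the isomorphism $P\otimes_{\Phi(B')}Q\cong\Phi(B)$ is a statement about the relative tensor product over $\Phi(B')$ and has target $\Phi(B)$, whose underlying algebra in $\C$ is $(A\otimes B)\otimes A$, not $B$. After restriction you would at best be comparing $P\otimes_{B'}Q$ --- which surjects onto, but generally differs from, $P\otimes_{U\Phi(B')}Q$ --- with $(A\otimes B)\otimes A$; ``reading the displayed identity in reverse'' never lands on $B$. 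Restriction of scalars along an algebra map is not a Morita-invariant operation, and the retraction $U\Phi(X)\to X$ supplied by specialness is a splitting of objects, not of Morita-theoretic data. What is actually needed --- and what the paper supplies --- is a separate argument that the ``matrix algebra'' $(E\otimes B)\otimes E$, with $E={}^*\hspace{-.03in}A\otimes A$ the internal End (Proposition~\ref{prop:EndFrob}), is Morita equivalent to $B$ in $\C$. Concretely, the paper chains the Morita-preserving functors $\widehat{\Phi}$, $\widehat{U}$, $U$ (Corollary~\ref{cor:Moritapres}) to get $U\widehat{U}\widehat{\Phi}\Phi(B)\sim U\widehat{U}\widehat{\Phi}\Phi(B')$ in $\C$, identifies $U\widehat{U}\widehat{\Phi}\Phi(B)\cong(E\otimes B)\otimes E$ as algebras (Lemma~\ref{lem:Morita-updown}), and then runs the explicit dual-pair argument of Proposition~\ref{prop:MoritaMat} using the self-duality of $E$ and the rigidity of $\C$. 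Without some version of this step your descent cannot close the loop back to $B$ itself.
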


Indeed, with Theorem~\ref{thm:OstNat-intro} (due to Ostrik and Natale) and Proposition~\ref{prop:ALpsi-intro}, Theorem~\ref{thm:Morita-updown-intro}  provides the crucial step in proving Theorem~\ref{thm:modtha-new-intro} by setting $\C = \vecgw$, $A = A(K,\beta)$, $B = A(L,\psi)$, $B' = A(L',\psi')$.

\medskip

Our paper is organized as follows. We provide background material on fusion categories, module categories, and algebraic structures within them in Section~\ref{sec:prelim}. In Section~\ref{sec:Phi}, we establish Theorem~\ref{thm:Phi-intro}, and in Section~\ref{sec:MoritaPhi}, we obtain Theorem~\ref{thm:Morita-updown-intro}. Then, Proposition~\ref{prop:ALpsi-intro} is obtained in Section~\ref{sec:pointed}. Proposition~\ref{prop:tha-intro} is proved in Section~\ref{sc: algebras in GT}. Finally, the main result, Theorem~\ref{thm:modtha-new-intro}, is verified in Section~\ref{sec:repgtfc} by combining the results above.


\section{Preliminaries on fusion categories} \label{sec:prelim}

In this  section, we provide background information and preliminary results on fusion categories. We consider the following terminology. 

\begin{definition}[$(\mathcal{C}, \otimes, \unit, \alpha, l, r)$] \cite[Sections~2.1, ~2.2, ~2.10 and ~4.1]{EGNO}
\begin{enumerate}
    \item 
A \textit{monoidal category} $\mathcal{C}$ consists of the following data: a category $\C$; a bifunctor $\otimes : \C \times \C\rightarrow \C$; an object $\unit \in \C$; a natural isomorphism  $$\alpha_{X,X',X''}: (X \otimes X')\otimes X'' \overset{\sim}{\to} X \otimes (X'\otimes X'')$$ for each $X,X',X'' \in \C$; natural isomorphisms $$l_X: \mathbbm{1} \otimes X \overset{\sim}{\to} X, \quad \quad r_X:X \otimes \mathbbm{1} \overset{\sim}{\to} X$$ for each $X \in \C$,
 such that the pentagon and triangle coherence conditions are satisfied \cite[(2.2),(2.10)]{EGNO}.
 \smallskip
    \item An object in a monoidal category $\C$ is called \emph{rigid} if it has left and right duals.  Namely, for each $X \in \C$, there exists objects $X^*$ and ${}^* \hspace{-.015in} X \in \C$ so that we have co/evaluation maps 
    \[
    \begin{array}{c}
    \smallskip
    {\sf ev}_X: X^* \otimes X \to \unit, \quad\quad {\sf coev}_X:  \unit \to X \otimes X^*,\\
    {\sf ev}'_X: X \otimes {}^* \hspace{-.015in} X \to \unit, \quad \quad {\sf coev}'_X:  \unit \to {}^* \hspace{-.015in} X \otimes X,
    \end{array}
    \]
    satisfying compatibility conditions \cite[(2.43)--(2.46)]{EGNO}.
    Further, a monoidal category $\mathcal{C}$ is called \emph{rigid} if each of its objects is rigid. 
    \smallskip
    \item  A  $\kk$-linear, abelian, semisimple,  finite, rigid monoidal category $\mathcal{C}$  is a {\it fusion category} over $\kk$ if {\sf End}$_{\mathcal{C}}(\unit) \cong \kk$. 
\end{enumerate}
\end{definition}

\begin{remark}
Note that the tensor product for fusion categories is exact in both factors \cite[Proposition~4.2.1]{EGNO}. 
\end{remark}

\begin{center}
{\it We assume that $\mathcal{C}$ is a fusion category over $\kk$ throughout this article, \\ unless otherwise specified.}
\end{center}

\subsection{Monoidal functors and module categories} \label{sec:monoidal}

\begin{definition}  \label{def:monfunc}
\cite[page~85]{Str} \cite{DP08} \cite[(6.46), (6.47)]{Sz}
Let $\C$, $\mathcal{D}$ be  monoidal categories.
\begin{enumerate}
    \item  A \textit{monoidal functor} $(F, F_{\ast,\ast}, F_0): \C \to \mathcal{D}$ consists of 
    a functor $F: \C \to \mathcal{D}$,
    a natural transformation $F_{X,X'}:F(X)\otimes_{\mathcal{D}} F(X') \to F(X \otimes_{\C} X')$ for all $X,X' \in \C$, and
     a morphism $F_0: \unit_{\mathcal{D}} \to F(\unit_{\C})$ in $\mathcal{D}$,
    that satisfy the following associativity and unitality constraints,  
    \[
    \begin{array}{ll}
    \smallskip
    &F_{X,X'\otimes_\C X''}\;(\id_{F(X)} \otimes_{\mathcal{D}} F_{X',X''})\;\alpha_{F(X),F(X'),F(X'')}\\
    &\hspace{1.3in} = F(\alpha_{X,X',X''})\;F_{X \otimes_\C X',X''}\;(F_{X,X'}\otimes_{\mathcal{D}}\id_{F(X'')}),
    \end{array}
    \]
    
    \vspace{-.04in}
    
    \[
    \begin{array}{rl}
    \smallskip
    F(l_X)^{-1}\;l_{F(X)} &= F_{\unit_\C,X} \; (F_0 \otimes_{\mathcal{D}} \id_{F(X)}),\\
    F(r_X)^{-1}\;r_{F(X)} &= F_{X,\unit_\C} \; (\id_{F(X)} \otimes_{\mathcal{D}} F_0).
    \end{array}
    \]
    \smallskip
    \item  A \textit{comonoidal functor} $(F, F^{\ast,\ast}, F^0): \C \to \mathcal{D}$ consists of a  functor $F: \C \to \mathcal{D}$,  a  natural transformation $F^{X,X'}: F(X \otimes_{\C} X') \to F(X)\otimes_{\mathcal{D}} F(X')$ for all $X,X' \in \C$, and a  morphism $F^0: F(\unit_{\C}) \to  \unit_{\mathcal{D}}$ in $\mathcal{D}$, that satisfy the following coassociativity and counitality constraints, 
        \[
    \begin{array}{ll}
    \smallskip
    &\alpha^{-1}_{F(X),F(X'),F(X'')}\;(\id_{F(X)} \otimes_{\mathcal{D}} F^{X',X''})\;F^{X,X'\otimes_\C X''}\\
    &\hspace{1.3in} = (F^{X,X'}\otimes_{\mathcal{D}}\id_{F(X'')})\; F^{X \otimes_\C X',X''}\;F(\alpha^{-1}_{X,X',X''}),
    \end{array}
    \]
    
     \vspace{-.02in}
     
    \[
    \begin{array}{rl}
    \smallskip
    F(l_X) &= l_{F(X)}\; (F^0 \otimes_{\mathcal{D}} \id_{F(X)})\; F^{\unit_\C,X},\\
    F(r_X) &= r_{F(X)} \;(\id_{F(X)} \otimes_{\mathcal{D}} F^0)\; F^{X,\unit_\C}.
    \end{array}
    \]
    \smallskip
    \item  A \textit{Frobenius monoidal functor}  $(F, F_{\ast,\ast}, F_0, F^{\ast,\ast}, F^0)$ from $\C$ to $\mathcal{D}$ is a functor where $(F, F_{\ast,\ast}, F_0)$ is monoidal and $(F, F^{\ast,\ast}, F^0)$ is comonoidal, such that for all $X,X',X'' \in \C$:
    \[
    \begin{array}{l}
    \smallskip
    \hspace{.5in} (F_{X,X'} \otimes_{\mathcal{D}} \id_{F(X'')} )\alpha^{-1}_{F(X),F(X'),F(X'')}(\id_{F(X)} \otimes_{\mathcal{D}} F^{X',X''})\\ \medskip
    \hspace{2.5in} = F^{X\otimes_{\C}X',X''}F(\alpha^{-1}_{X,X',X''}) F_{X,X'\otimes_{\C}X''}, \\  \smallskip
    \hspace{.5in} (\id_{F(X)} \otimes_{\mathcal{D}} F_{X',X''}) \alpha_{F(X),F(X'),F(X'')} (F^{X,X'}\otimes_{\mathcal{D}} \id_{F(X'')})\\ 
    \hspace{2.5in} = F^{X,X'\otimes_{\C}X''}F(\alpha_{X,X',X''})F_{X\otimes_{\C}X',X''}.
    \end{array}
    \]
\end{enumerate}
\end{definition}

\smallskip

Here, `monoidal' means `lax monoidal' in other references.  Strong monoidal functors are monoidal functors where $F_{*,*}$ and $F_0$ are isomorphisms in $\mathcal{D}$, and we do not require this condition here.

\begin{definition}(see, e.g., \cite[Sections~7.1,~7.2]{EGNO}) \label{def:Cmodcat} 
Let $\C$  be a fusion  category.
\begin{enumerate}
\item A {\it left $\C$-module category} is a semisimple, $\kk$-linear, abelian category $\mathcal{M}$ equipped with 
a bifunctor $\otimes: \C \times \mathcal{M}\to \mathcal{M}$ bilinear on morphisms and exact,
 natural isomorphisms for associativity
    $$ \quad \quad \quad \quad  m_{X,Y,M}: (X \otimes Y) \otimes M \overset{\sim}{\to} X \otimes (Y \otimes M), \quad \forall X, Y \in \C, \; M \in \M$$
    satisfying the pentagon axiom, and
 for each $M \in \mathcal{M}$ a natural isomorphism $\unit \otimes M \overset{\sim}{\to} M$ satisfying the triangle axiom \cite[(7.2), (7.4)]{EGNO}.
 
{\it Right $\C$-module categories} are defined analogously. 

\smallskip

\item A module category $\mathcal{M}$ over $\C$ is {\it indecomposable} if it is nonzero and is not equivalent to a direct sum of two nontrivial module categories over $
\C$.

\smallskip

\item Let $\mathcal{M}$ and $\mathcal{N}$ be two left $\C$-module categories. A {\it (left) $\C$-module functor} from $\mathcal{M}$ to $\mathcal{N}$ is a functor $F: \mathcal{M} \to \mathcal{N}$ equipped with a natural isomorphism $s_{X,M}: F(X \otimes M) \overset{\sim}{\to} X \otimes F(M)$ for each $X \in \C$, $M \in \mathcal{M}$ satisfying the pentagon and triangle axioms \cite[(7.6), (7.7)]{EGNO}.
{\it Right $\C$-module functors} are defined analogously.

\smallskip

\item An {\it equivalence of $\C$-module categories} is a $\C$-module functor $(F,s)$ so that $F: \mathcal{M} \to \mathcal{N}$ is an equivalence of categories.
\end{enumerate}
\end{definition}


\subsection{Algebraic structures in fusion  categories}  \label{sec:alg-mon}

Now we recall the notion of an algebra, a coalgebra, and a Frobenius algebra in a fusion  category. For general information, see \cite[Section~2]{FRS-3}, \cite[Section~3]{OstrikTG2003}, \cite[Section~7.8]{EGNO}, and references within.

\begin{definition}[${\sf Alg}(\C)$, ${\sf Coalg}(\C)$, ${\sf FrobAlg}(\C)$] \label{def:algstr}  Let $\C$ be a  monoidal category.
\begin{enumerate}
    \item An \textit{algebra} in $\C$ is a triple $(A,m,u)$, with $A \in \C$, and  $m:A \otimes A \to A$ (multiplication), $u:\unit \to A$ (unit) being morphisms in $\C$,
    satisfying unitality and associativity constraints: 
    
    \vspace{-.15in}
    
    \begin{align*} 
      \hspace{.5in}  m (m \otimes \id) = m(\id \otimes m) \alpha_{A,A,A}, \quad \quad
       m (u \otimes \id) = l_A, \quad   \quad m(\id \otimes u) = r_A. 
    \end{align*}
    
    \smallskip
    
    \noindent A {\it morphism} of algebras $(A, m_A, u_A)$ to $(B, m_{B}, u_{B})$  is a map $f: A \to B$ in $\C$ so that $fm_A = m_{B}(f \otimes f)$ and $fu_A = u_{B}$.  Algebras in $\C$ and their morphisms  form a category, which we denote by ${\sf Alg}(\C)$.

    \medskip
    
    \item A \textit{coalgebra} in $\C$ is a triple $(C,\Delta,\varepsilon)$, where  $C \in  \C$, and $\Delta:C \to C \otimes C$ (comultiplication) and  $\varepsilon:C \to \unit$ (counit) are morphisms in $\C$,
    satisfying counitality and coassociativity constraints: 
    \begin{align*}
      \hspace{.5in}  \alpha_{C,C,C}(\Delta \otimes \id) \Delta = (\id \otimes \Delta)\Delta, \quad \quad 
        (\varepsilon \otimes \id)\Delta = l_C^{-1}, \quad  \quad (\id \otimes \varepsilon)\Delta = r_C^{-1}.
    \end{align*}
  A {\it morphism} of coalgebras $(C, \Delta_C, \varepsilon_C)$ to $(D, \Delta_{D}, \varepsilon_{D})$  is a morphism \linebreak $g: C \to D$ in $\C$ so that $\Delta_{D}g  = (g \otimes g) \Delta_{C}$ and $\varepsilon_{D} g = \varepsilon_C$. Coalgebras in $\C$ and their morphisms  form a category, which we denote by ${\sf Coalg}(\C)$.
  
   \medskip
   
    \item A \textit{Frobenius algebra} in $\C$ is a tuple $(A, m, u, \Delta, \varepsilon)$, where
   $(A,m,u) \in {\sf Alg}(\C)$ and $(A,\Delta,\varepsilon) \in {\sf Coalg}(\C)$, so that 
   
    \vspace{-.15in}
    
       \begin{equation*}
      \hspace{.5in} (m \otimes \id)\alpha^{-1}_{A,A,A}(\id \otimes \Delta)=\Delta m = (\id \otimes m)\alpha_{A,A,A}(\Delta \otimes \id).
      \end{equation*}
      
      \smallskip
      
   \noindent  A {\it morphism} of Frobenius algebras in $\C$ is a morphism in $\C$ that lies in both ${\sf Alg}(\C)$ and ${\sf Coalg}(\C)$. 
    Frobenius algebras in $\C$ and their morphisms  form a category, which we denote by ${\sf FrobAlg}(\C)$.
\end{enumerate}
\end{definition}

\begin{remark} \label{rem:Frobpairing}
\begin{enumerate}
    \item Alternatively, a Frobenius algebra in $\mathcal{C}$ is a tuple $(A,m,u,p,q)$, where $(A, m, u) \in {\sf Alg}(\C)$, $p: A \otimes A \to \unit$  and $q: \unit \to A \otimes A$  are morphisms in $\mathcal{C}$ satisfying an invariance condition, $p(\id_A \otimes m)\alpha_{A,A,A} = p(m \otimes \id_A)$, and the `snake' equations.
To convert from $(A,m,u,p,q)$ to $(A,m,u,\Delta,\varepsilon)$ in Definition~\ref{def:algstr}(c), take 
$\Delta:=(m \otimes \id_A)\alpha_{A,A,A}^{-1}(\id_A \otimes q)r_A^{-1}$ and  $\varepsilon:=p\;(u \otimes \id_A)r_A^{-1}.$
On the other hand, to convert from $(A,m,u,\Delta,\varepsilon)$ to $(A,m,u,p,q)$, take 
$p:= \varepsilon_A m_A$ and $q:= \Delta_A u_A$. 

\smallskip

\item Note that ${}^* \hspace{-.03in}A$ is naturally a left $A$-module. A Frobenius algebra in $\C$ can then be equivalently defined as an algebra $A$ in $\C$ so that $(A,\lambda_A)$ is isomorphic to $({}^* \hspace{-.03in}A, \lambda_{{}^* \hspace{-.03in}A})$ as left $A$-modules. See Definition \ref{def:A-mod} below for the definition of an $A$-module. 
\end{enumerate}

\smallskip
See \cite{FuchsStigner} an \cite[Section~2.3]{kock2003frobenius} for more details.
\end{remark}

Next, we recall how the functors of Definition~\ref{def:monfunc} preserve the algebraic structures in  Definition~\ref{def:algstr}.

\begin{proposition} \label{prp:monfunctor}
\cite[p.100-101]{Str} \cite[Lemma ~2.1]{Sz} \cite[Corollary~5]{DP08} \cite[Prop.~2.13]{KongRunkel}
Let $\C$ and $\mathcal D$ be monoidal categories.
\begin{enumerate}
    \item[\textnormal{(a)}]   Let $(F, F_{\ast,\ast}, F_0): \C \to \mathcal{D}$ be a monoidal functor. If $(A,m,u) \in {\sf Alg}(\C)$, then $$(F(A),~ F(m)F_{A,A}, ~F(u)F_0) \in {\sf Alg}(\mathcal{D}).$$
    \item[\textnormal{(b)}]   Let $(F, F^{\ast,\ast},F^0): \C \to \mathcal{D}$ be a comonoidal functor. If $(C,\Delta,\varepsilon) \in {\sf Coalg}(\C)$, then 
    $$(F(C),~ F^{C,C}F(\Delta), ~F^0F(\varepsilon)) \in {\sf Coalg}(\mathcal{D}).$$ 
    \item[\textnormal{(c)}] Let $(F, F_{\ast,\ast},F_0,F^{\ast,\ast},F^0):\C \to \mathcal{D}$ be a  Frobenius monoidal functor. If $(A,m,u,\Delta,\varepsilon) \in {\sf FrobAlg}(\C)$, then 
    $$(F(A),\; F(m)F_{A,A}, \; F(u)F_0, \; F^{A,A}F(\Delta),\;  F^0F(\varepsilon)) \in {\sf FrobAlg}(\mathcal{D}).$$  
    
    \vspace{-.3in}
    \qed
\end{enumerate}
\end{proposition}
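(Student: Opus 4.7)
The plan is to verify each of (a)--(c) by a direct diagram chase using only the defining axioms of the relevant functors. Since (b) is the formal dual of (a), I would prove (a) in detail and then obtain (b) by reversing every arrow. Part (c) then combines (a), (b), and the two Frobenius-monoidal compatibility axioms to check the Frobenius condition on $F(A)$.

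For (a), write $m' := F(m)\,F_{A,A}$ and $u' := F(u)\,F_0$. The associativity law
\[
m'(m' \otimes \id_{F(A)}) \;=\; m'(\id_{F(A)} \otimes m')\,\alpha_{F(A),F(A),F(A)}
\]
unfolds by two applications of naturality of $F_{*,*}$ (to push $F_{*,*}$ past $F(m) \otimes \id_{F(A)}$ and $\id_{F(A)} \otimes F(m)$), one application of the associativity constraint of $(F,F_{*,*},F_0)$ (which absorbs $F(\alpha_{A,A,A})$), and finally $F$ applied to the associativity of $m$ in $\C$. Unitality is a shorter analogous chase using the unitality constraints of $(F,F_{*,*},F_0)$ and of $A$. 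Part (b) is then obtained by reading (a) with every arrow reversed: set $\Delta' := F^{C,C}\,F(\Delta)$ and $\varepsilon' := F^0\,F(\varepsilon)$, and run the dual chase using naturality of $F^{*,*}$ and the coassociativity/counitality constraints of $(F,F^{*,*},F^0)$.

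For (c), parts (a) and (b) already equip $F(A)$ with the claimed algebra and coalgebra structures, so only the Frobenius condition
\[
(m' \otimes \id_{F(A)})\,\alpha^{-1}_{F(A),F(A),F(A)}\,(\id_{F(A)} \otimes \Delta') \;=\; \Delta'\,m' \;=\; (\id_{F(A)} \otimes m')\,\alpha_{F(A),F(A),F(A)}\,(\Delta' \otimes \id_{F(A)})
\]
remains. For the middle quantity $\Delta' m' = F^{A,A}\,F(\Delta)\,F(m)\,F_{A,A}$, I would apply $F$ to the identity $\Delta\,m = (m \otimes \id_A)\,\alpha^{-1}_{A,A,A}\,(\id_A \otimes \Delta)$ in $\C$, and then use naturality of $F_{*,*}$ and $F^{*,*}$ to separate $F(m \otimes \id_A)$ and $F(\id_A \otimes \Delta)$ into their tensor factors. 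The residual composite of $F_{*,*}$'s, $F^{*,*}$'s, and $\alpha^{\pm 1}$'s is precisely one side of the first Frobenius monoidal axiom in Definition~\ref{def:monfunc}(3), which rewrites it as the left outer expression; the right outer equality is handled symmetrically using the second Frobenius monoidal axiom. The main obstacle is purely bookkeeping: one must keep track of several associators $\alpha^{\pm 1}$, two families of structure morphisms $F_{*,*}$ and $F^{*,*}$, and their naturality squares across two different monoidal categories, but no new conceptual ingredient beyond the axioms of Definition~\ref{def:monfunc} is needed.
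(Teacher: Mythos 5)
Your proof is correct and is precisely the standard diagram chase found in the sources the paper cites (Day--Pastro, Szlach\'anyi); the paper itself omits the proof of Proposition~\ref{prp:monfunctor} entirely, deferring to those references. In particular, your reductions — naturality of $F_{\ast,\ast}$ plus the monoidal associativity/unitality constraints for (a), formal dualization for (b), and the two Frobenius compatibility axioms of Definition~\ref{def:monfunc}(3) applied to the two halves of the Frobenius identity $\Delta m = (m \otimes \id)\alpha^{-1}(\id \otimes \Delta) = (\id \otimes m)\alpha(\Delta \otimes \id)$ for (c) — match the intended argument exactly.
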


Some properties of the structures in Definition~\ref{def:algstr} of interest are given below.

\begin{definition}
\label{def:algprop}  Take $\C$ a  fusion category.
\begin{enumerate}
     \item $A \in {\sf Alg}(\C)$ is \textit{indecomposable} if it is not isomorphic to a direct sum of non-trivial algebras in $\C$.
    
     \smallskip
     
     \item $A \in {\sf Alg}(\C)$ is \textit{connected} (or \textit{haploid}) if 
     $\dim_{\kk}\Hom_{\mathcal C} (\unit, A) = 1$.

\smallskip
    
        \item $A \in {\sf Alg}(\C)$ is \textit{separable} if there exists a morphism $\Delta':A \to A \otimes A$ in $\C$ so that  $m \Delta' = \id_A$ as maps in $\C$ with
              \vspace{-.15in}
              
    \begin{equation*}
          \hspace{.5in}
               (\id_A\otimes m) \alpha_{A, A, A} (\Delta'\otimes \id_A) ~=~ \Delta' m ~=~ (m\otimes \id_A) \alpha^{-1}_{A, A, A} (\id_A\otimes \Delta').
   \end{equation*}
   
   \smallskip
   
       \item $(A,m,u,\Delta,\varepsilon) \in {\sf FrobAlg}(\C)$ is 
       \textit{special} if $m \Delta =  \id_A$ and $\varepsilon u = \varphi \; \id_\unit$ for a nonzero $\varphi \in \kk$.
     \end{enumerate}
\end{definition}

\begin{remark} \label{rmk:properties}
\begin{enumerate}
  \item The displayed equations in  Definition~\ref{def:algprop}(c) above are that $m$ splits as a map of $A$-bimodules in $\C$ via $\Delta'$; see Sections~\ref{sec:mod} and~\ref{sec:bimod}.
  
  \smallskip
  
 \item  The special Frobenius condition above implies separability, and connected implies indecomposable.
 \end{enumerate}
\end{remark}


\subsection{Categories of modules over algebras} \label{sec:mod} Fix $\C$ a fusion category. Now we turn our attention to modules over algebras in $\C$. For more details, see \cite[Section~3]{OstrikTG2003} and \cite[Section~7.8]{EGNO}.

\begin{definition}[$\rho_M$, $\rho_M^A$, $\lambda_M$, $\lambda_M^A$, $\C_A$, ${}_A \C$] \label{def:A-mod}  Take $A:=(A,m_A,u_A)$, an algebra in $\C$. 
A {\it right $A$-module in $\C$} is a pair $(M, \rho_M)$, where $M \in \C$, and $\rho_M:=\linebreak \rho_M^A: M \otimes A \to M$ is a morphism in $\C$  so that $$\rho_M(\rho_M \otimes \id_A) = \rho_M(\id_M \otimes m_A)\alpha_{M,A,A} \quad \text{ and } \quad r_M = \rho_M(\id_M \otimes u_A).$$ A {\it morphism} of right $A$-modules in $\C$ is a morphism $f: M \to N$ in $\C$ so that $f\rho_M  = \rho_N (f \otimes \id_A)$. Right $A$-modules in $\C$ and their morphisms form a category, which we denote by~$\C_A$.
The category ${}_A \C$ of {\it left $A$-modules $(M, \lambda_M:=\lambda_M^A:A \otimes M \to M)$  in $\C$} is defined likewise.
\end{definition}

We have that $\C_A$ is a left $\C$-module category: for $X \in \C$ and $(M, \rho_M) \in \C_A$, the bifunctor $\C \times \C_A \to \C_A$ is defined by 
$$(X \otimes M) \otimes A \overset{\alpha_{X,M,A}}{\xrightarrow{\hspace*{2cm}}} X \otimes (M \otimes A) \overset{\id_X \otimes \rho_M}{\xrightarrow{\hspace*{2cm}}} X \otimes M.$$
Similarly,  ${}_A \C$ is a right $\C$-module category.

\begin{proposition}\label{prop:indssCA} \cite[Remark~3.1]{OstrikTG2003} \cite[Proposition~7.8.30]{EGNO}
We have that $\mathcal{C}_A$ is an indecomposable (resp., semisimple) $\mathcal{C}$-module category if $A$ is an indecomposable (resp., separable) algebra in $\mathcal{C}$. \qed
\end{proposition}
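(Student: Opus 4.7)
The plan is to prove the two assertions separately, both by standard categorical manipulations.

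\textbf{Indecomposable case.} I would argue the contrapositive: if $\C_A$ admits a nontrivial decomposition $\C_A \simeq \M_1 \oplus \M_2$ as $\C$-module categories, then $A$ decomposes as a direct sum of nonzero algebras in $\C$. First, split the right $A$-module $A_A$ in $\C_A$ as $A_1 \oplus A_2$ with $A_i \in \M_i$, both nonzero since $A_A$ generates $\C_A$ as a $\C$-module category. Because $\C \otimes \M_i \subseteq \M_i$, the product $A \otimes A$ decomposes in $\C_A$ as $\bigoplus_{i,j} A_i \otimes A_j$ with $A_i \otimes A_j \in \M_j$. Since $m : A \otimes A \to A$ is a morphism in $\C_A$ (by associativity) and $\Hom_{\C_A}(\M_j, \M_{j'}) = 0$ for $j \neq j'$, the restriction $m|_{A_i \otimes A_j}$ factors through $A_j$. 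A parallel argument using the symmetric decomposition of ${}_A \C$ coming from the same idempotent $\C$-module endofunctors shows $m|_{A_i \otimes A_j}$ also factors through $A_i$. For $i \neq j$ this forces $m(A_i \otimes A_j) \subseteq A_i \cap A_j = 0$. Decomposing $u_A = u_1 + u_2$ with $u_i : \unit \to A_i$ then promotes each $A_i$ to a subalgebra and yields $A \cong A_1 \oplus A_2$ in ${\sf Alg}(\C)$.

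\textbf{Separable case.} Here I would use the bimodule splitting $\Delta' : A \to A \otimes A$ of $m$ from Definition~\ref{def:algprop}(c) as an averaging device. For each $M \in \C_A$, define
$$s_M \;:=\; (\rho_M \otimes \id_A) \; \alpha_{M,A,A}^{-1} \; (\id_M \otimes \Delta' u_A) \; r_M^{-1} \;:\; M \to M \otimes A.$$
Using $m \Delta' = \id_A$ together with unitality gives $\rho_M \, s_M = \id_M$, and right $A$-linearity of $\Delta'$ together with associativity of the action shows $s_M$ is a morphism in $\C_A$. Hence every $M \in \C_A$ is a direct summand of $M \otimes A$ in $\C_A$. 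Given any monomorphism $N \hookrightarrow M$ in $\C_A$, one produces a retraction by taking any $\C$-linear retraction (available in the linear-abelian setting) and averaging against $\Delta'$ to obtain an $A$-equivariant retraction. Combining this splitting with the preceding summand property exhibits $\C_A$ as semisimple.

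\textbf{Main obstacle.} The delicate step is the upgrade from the module-level splitting $A = A_1 \oplus A_2$ in $\C_A$ to a genuine algebra splitting, i.e., verifying that the projection idempotents $e_i \in \Hom_{\C_A}(A,A)$ land in the categorical centre of $A$ and therefore define $A$-bimodule maps. Because there are no ``elements'' available in an abstract monoidal category, one must extract two-sided data from the one-sided $\C$-module structure on $\C_A$; it is precisely at this point that the full strength of a decomposition of $\C_A$ as a $\C$-module category (rather than merely a decomposition of $A$ as a right $A$-module in $\C_A$) is used.
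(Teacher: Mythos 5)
The paper gives no proof of this proposition --- it is quoted from Ostrik and from EGNO --- so your proposal can only be measured against the standard arguments in those sources. Your overall strategy is the standard one and the separable half is essentially right, but the indecomposable half has a genuine gap at exactly the step you flag as delicate, and the separable half uses an unstated hypothesis.

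In the indecomposable case, you assert that ``a parallel argument using the symmetric decomposition of ${}_A\C$ coming from the same idempotent $\C$-module endofunctors'' shows $m|_{A_i\otimes A_j}$ also factors through $A_i$. A decomposition of $\C_A$ as a \emph{left} $\C$-module category does not produce a decomposition of ${}_A\C$, and the idempotent endofunctors in play act on $\C_A$, not on ${}_A\C$; as written this step has no content, and your closing paragraph concedes that the two-sidedness of the idempotent is precisely what remains to be proved. The fix is more direct than what you gesture at: let $e_M: M\to M$ be the natural idempotent projecting onto the $\M_1$-component. Because the decomposition is one of $\C$-module categories, $e_{X\otimes M}=\id_X\otimes e_M$ for all $X\in\C$, and $e$ is natural with respect to every morphism of $\C_A$. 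Now $m: A\otimes A\to A$ is itself a morphism in $\C_A$ from the object $A\otimes A$ (the regular module acted on by the object $A$), so naturality plus $\C$-linearity give $e_A\, m = m\, e_{A\otimes A} = m\,(\id_A\otimes e_A)$; that is, $e_A$ is a left $A$-module map in addition to being a right one. Hence $A=A_1\oplus A_2$ as an $A$-bimodule, $m(A_i\otimes A_j)\subseteq A_i\cap A_j=0$ for $i\neq j$, and the algebra splitting follows as you describe (with $A_i\neq 0$ because every module is a quotient of some $X\otimes A$). With this replacement the first half is correct.

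In the separable case, the parenthetical ``(available in the linear-abelian setting)'' is false: in a general $\kk$-linear abelian category monomorphisms need not split, so there is no $\C$-linear retraction to average. The Maschke-type step requires the underlying monomorphism to split in $\C$, i.e., it requires $\C$ itself to be semisimple (or at least that the relevant monos split). This is harmless for the paper, where $\C=\vecgw$ is a fusion category and the cited EGNO proposition is stated for multifusion categories, but it is a hypothesis your argument uses and must state. A smaller point: the Casimir identity that makes $s_M$ and the averaged retraction right $A$-linear uses \emph{both} the left and the right $A$-linearity of $\Delta'$ (evaluated at the unit, these give $a\cdot(\Delta' u_A)=(\Delta' u_A)\cdot a$), not only the right one as you claim.
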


Next, we turn our attention to Morita equivalence of algebras in fusion categories.

\begin{definition}  \label{def:Moritaequiv} We say that two algebras $A$ and $B$ in $\C$ are {\it Morita equivalent} if $\C_A \sim \C_B$ as (left) $\C$-module categories.
\end{definition}

Several algebraic properties are preserved under Morita equivalence, such as indecomposability and separability. We will discuss a characterization of Morita equivalence in terms of bimodules in Section~\ref{sec:Moritabimod}.


\subsection{Categories of bimodules over algebras} \label{sec:bimod}
We recall here preliminary notions on bimodules over algebras in a fusion category $\C$. For general information, see \cite[Section~3.3]{Mombelli} and \cite[Section~7.8]{EGNO}.

\begin{definition}[${}_A \C_A$] \label{def:bimod} Take $A:=(A,m_A,u_A) \in {\sf Alg}(\C)$. 
An {\it $A$-bimodule in $\C$} is a triple $(M, \lambda_M, \rho_M)$, where $M \in \C$, and $\lambda_M: A \otimes M \to M$ and $\rho_M: M \otimes A \to M$ are morphisms in $\C$, so that $(M, \lambda_M) \in {}_A \C$ and $(M, \rho_M) \in \C_A$ with $$\lambda_M(\id_A \otimes \rho_M)\alpha_{A,M,A} ~=~ \rho_M(\lambda_M \otimes \id_A).$$ A {\it morphism} of  $A$-bimodules in $\C$ is a morphism $f: M \to N$ in $\C$ that is simultaneously a morphism in both ${}_A \C$ and $\C_A$. Bimodules over $A$ in $\C$ and their morphisms form a category, which we denote by ${}_A \C_A$. 
\end{definition}

\begin{definition}[$\otimes_A$, $\pi_{M,N}, \pi^A_{M,N}$] \label{def:pi}
Take $A$-bimodules $M$ and $N$ in $\C$. The {\it tensor product} of $M$ and $N$ over $A$ is the object of ${}_A \C_A$ given by
$$M \otimes_A N := \textnormal{coker}\big(\rho_M \otimes \id_N - (\id_M \otimes \lambda_N)\alpha_{M,A,N}\big).$$
Let $\pi_{M,N}:=\pi^A_{M,N}: M \otimes N \to M \otimes_A N$ denote the canonical projection, a morphism in $\C$.
Moreover, $M \otimes_A N$ is an $A$-bimodule via morphisms:
$$\lambda_{M\otimes_A N}: A \otimes (M\otimes_A N)\to M\otimes_A N \; \; \text{and}\; \; \rho_{M\otimes_A N}: (M\otimes_A N)\otimes A\to M\otimes_A N$$
so that 
$$\lambda_{M\otimes_A N}(\id_A\otimes\pi_{M, N})=\pi_{M, N}(\lambda_M\otimes \id_N)\alpha^{-1}_{A, M, N},$$
$$\pi_{M, N}(\id_M\otimes \rho_N)\alpha_{M, N, A}=\rho_{M\otimes_A N}(\pi_{M, N}\otimes \id_A).$$
\end{definition}

\smallskip

\begin{proposition}[$({}_A \mathcal{C}_A, ~\otimes_A,  ~A, ~\alpha_{\ast,\ast,\ast}^A, ~l_\ast^A, ~r_\ast^A)$] \label{prop:ACAmonoidal} \cite[Section~3.3.2]{Mombelli}
The category ${}_A \mathcal{C}_A$ has the structure of a monoidal category with
\begin{itemize}
    \item tensor product $\otimes_A$, 
    \item unit object $A$, and
    \item associativity constraint $\alpha^A_{X,X',X''}: (X \otimes_A X') \otimes_A X'' \overset{\sim}{\to} X \otimes_A (X' \otimes_A X'')$ for $X,X',X'' \in \C$, so that 
    $$ \hspace{.3in} \alpha^A_{X,X',X''}\; \pi_{X \otimes_A X', X''} \; (\pi_{X,X'} \otimes \id_{X''}) = \pi_{X, X' \otimes_A X''}\; (\id_X \otimes \pi_{X',X''})\; \alpha_{X,X',X''},$$
    \item unit constraints $l^A_X: A \otimes_A X \overset{\sim}{\to} X$ and $r^A_X: X \otimes_A A \overset{\sim}{\to} X$ so that 
    $$l^A_X \; \pi_{A,X} = \lambda_X \qquad \text{and} \qquad r^A_X \; \pi_{X,A} = \rho_X.$$ 
\end{itemize}

\vspace{-.2in}
    \qed
\end{proposition}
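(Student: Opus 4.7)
The plan is to construct each piece of monoidal structure on ${}_A \mathcal{C}_A$ by descending the corresponding structure on $(\mathcal{C}, \otimes)$ through the coequalizer projections $\pi_{M,N}: M \otimes N \to M \otimes_A N$ of Definition~\ref{def:pi}, using that each $\pi_{M,N}$ is an epimorphism satisfying a universal property. Before anything else, I would verify that $M \otimes_A N$ is a well-defined object of ${}_A \mathcal{C}_A$ with the displayed actions: existence of $\lambda_{M \otimes_A N}$ follows from showing that $\pi_{M,N}(\lambda_M \otimes \id_N)\alpha^{-1}_{A,M,N}$ coequalizes the defining pair for $M \otimes_A N$ (post-composed with $\id_A \otimes -$), which in turn uses only naturality of $\alpha$ and associativity of $\lambda_M$; existence of $\rho_{M \otimes_A N}$ is symmetric, and compatibility of the two actions lifts to the compatibility already holding in $M \otimes N$. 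Functoriality of $\otimes_A$ in each variable then follows by applying the universal property to $\pi_{M',N'}(f \otimes g)$ for bimodule morphisms $f, g$.

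For the associativity constraint $\alpha^A_{X,X',X''}$, the strategy is to apply the universal property twice. The composite $\pi_{X,\, X' \otimes_A X''}\,(\id_X \otimes \pi_{X',X''})\,\alpha_{X,X',X''}$ coequalizes the relation defining $(X \otimes X') \otimes_A X''$, because the part involving the right action of $A$ on $X'$ is absorbed by $\pi_{X',X''}$, yielding a map $(X \otimes X') \otimes_A X'' \to X \otimes_A (X' \otimes_A X'')$. A second application, after checking that the resulting map coequalizes the relation defining $(X \otimes_A X') \otimes_A X''$ from $(X \otimes X') \otimes_A X''$ (which uses the compatibility of $\alpha$ with the right action on $X$ and the explicit form of $\rho_{X \otimes_A X'}$), produces $\alpha^A_{X,X',X''}$. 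Invertibility follows by constructing its inverse symmetrically from $\alpha^{-1}_{X,X',X''}$.

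The unit constraints are forced by the universal property: the left-module axiom $\lambda_X(m_A \otimes \id_X) = \lambda_X(\id_A \otimes \lambda_X)\alpha_{A,A,X}$ is exactly the coequalizer relation defining $A \otimes_A X$, so $\lambda_X$ factors uniquely through $\pi_{A,X}$ to give $l^A_X$. Invertibility follows because $(u_A \otimes \id_X)\,l_X^{-1}$ provides a section of $\lambda_X$ that descends through $\pi_{A,X}$; the map $r^A_X$ is constructed dually from $\rho_X$. Naturality of $\alpha^A$, $l^A$, $r^A$ follows from naturality of $\alpha$, $l$, $r$ together with uniqueness in the universal property. Finally, the pentagon and triangle axioms in ${}_A \mathcal{C}_A$ are verified by precomposing both sides with the appropriate iterated projection (a composite of $\pi$'s, hence an epimorphism) and reducing to the pentagon and triangle axioms in $\mathcal{C}$.

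The main obstacle is the bookkeeping for $\alpha^A$: one must ensure that the twofold universal-property construction really coequalizes the correct relations at each stage, which requires careful use of the bimodule compatibility between $\lambda$ and $\rho$ and the defining equalities for $\lambda_{X' \otimes_A X''}$ and $\rho_{X \otimes_A X'}$. Once this is in hand, all coherence conditions reduce routinely to those in $\mathcal{C}$ via the fact that every $\pi_{M,N}$ is an epimorphism.
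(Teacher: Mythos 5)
The paper does not actually prove this proposition---it is stated with a citation to Mombelli and no argument---so there is no in-paper proof to compare against; your outline is the standard construction that the cited reference carries out, and it is essentially correct. The one point you pass over silently is that every descent through $\id\otimes\pi$ or $\pi\otimes\id$ (for the existence of $\lambda_{M\otimes_A N}$ and $\rho_{M\otimes_A N}$, for the two-stage construction of $\alpha^A$, and for the ``precompose with an epimorphism'' verification of the pentagon and triangle) presupposes that the tensor product of $\C$ sends the coequalizer defining $\otimes_A$ to a coequalizer, i.e., that $\otimes$ is right exact in each variable; otherwise $\id_A\otimes\pi_{M,N}$ need not present $A\otimes(M\otimes_A N)$ as the required quotient and the universal property cannot be invoked. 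This hypothesis holds automatically in the fusion-category setting where the paper applies the result, but your argument genuinely uses it and it should be stated.
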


 In addition, for maps $f:X \to W$ and $g: Y \to Z$ in ${}_A \mathcal{C}_A$, we get that
\begin{equation} \label{eq:tenAmap}
(f \otimes_A g) \; \pi_{X,Y} ~=~ \pi_{W,Z} \; (f \otimes g)
\end{equation}
as maps in $\mathcal{C}$.

Moreover, we have by a result of Yamagami that ${}_A \mathcal{C}_A$ is a fusion category under nice conditions on $A$. 

\begin{proposition} \label{prop:Yam} 
\cite[Proposition~5.6, Corollary~6.2]{Yamagami} If $A$ is an indecomposable, special Frobenius algebra in a fusion category $\C$, then ${}_A \mathcal{C}_A$  is a fusion category. \qed
\end{proposition}


\subsection{On Morita equivalence of algebras} \label{sec:Moritabimod}

We provide here characterizations for the Morita equivalence of algebras in fusion categories [Definition~\ref{def:Moritaequiv}], and provide other preliminary results that we will need later in Section~\ref{sec:MoritaPhi}. First, consider the following notation. 

\begin{definition}[$\overline{\alpha}_{\ast,\ast,\ast}$]  \label{def:alphabar}
Let $\mathcal{C}$ be a fusion category, and take two algebras $A$ and $B$ in $\C$. Let $X,Z \in {}_A \C_B$ and $Y \in {}_B \C_A$. Take 
\[
\begin{array}{c}
\medskip
\overline{\alpha}_{X,Y,Z}: (X \otimes_B Y) \otimes_A Z \to X \otimes_B (Y \otimes_A Z)
\end{array}
\]
to be the morphism  in $\C$ defined by the commutative diagram:
\[
{\small
\xymatrix@C+3pc@R-.5pc{
(X \otimes Y) \otimes Z \ar[r]^{\pi^B_{X,Y} \otimes \id_Z} \ar[d]_\alpha& (X \otimes_B Y) \otimes Z  \ar[r]^{\pi^A_{XY,Z}} 
& (X \otimes_B Y) \otimes_A Z \ar@{-->}[d]^{\overline{\alpha}}\\
X \otimes (Y \otimes Z)\ar[r]^{\pi^B_{X,YZ}} & X \otimes_B (Y \otimes Z) \ar[r]^{\id_X \otimes_B \pi^A_{Y,Z}} & X \otimes_B (Y \otimes_A Z).\\
}
}
\]
The same notation will apply in the case when the roles $A$ and $B$ are reversed.
\end{definition}

\begin{lemma} \label{lem:alphabar} \cite[Exercise~7.8.28]{EGNO}
 The morphism $\overline{\alpha}$ exists, and is an isomorphism in $\C$.  \qed
\end{lemma}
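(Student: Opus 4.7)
The plan is to construct $\overline{\alpha}_{X,Y,Z}$ by a two-step application of the universal property of the coequalizers defining $\otimes_B$ and $\otimes_A$, and then to obtain its inverse by the mirror-image construction starting from $\alpha^{-1}$. Throughout, I will use that the composite $\pi^A_{X \otimes_B Y, Z} \circ (\pi^B_{X,Y} \otimes \id_Z)$ is an epimorphism (as a composite of two coequalizers), so that any factorization it admits is automatically unique; this is what forces both the well-definedness of $\overline{\alpha}$ and the identities characterizing its inverse.

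For existence, I would start with the outer composite
$$h \;=\; (\id_X \otimes_B \pi^A_{Y,Z}) \circ \pi^B_{X, Y \otimes Z} \circ \alpha_{X,Y,Z} \colon (X \otimes Y) \otimes Z \longrightarrow X \otimes_B (Y \otimes_A Z)$$
and factor it in two stages. In the first stage, I would check that $h$ coequalizes the parallel pair $\rho_X \otimes \id_{Y \otimes Z}$ and $(\id_X \otimes \lambda_Y \otimes \id_Z)(\alpha_{X,B,Y} \otimes \id_Z)$ whose coequalizer is $\pi^B_{X,Y} \otimes \id_Z$. Postcomposing by $\alpha_{X,Y,Z}$ and using naturality of $\alpha$, the difference of the two composites is absorbed by $\pi^B_{X, Y \otimes Z}$, since by definition that projection kills precisely the $B$-balanced difference on $X \otimes (Y \otimes Z)$. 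This yields a unique morphism $g \colon (X \otimes_B Y) \otimes Z \to X \otimes_B (Y \otimes_A Z)$. In the second stage, I would check that $g$ coequalizes the pair defining $\pi^A_{X \otimes_B Y, Z}$, using that the right $A$-action on $X \otimes_B Y$ is induced (through $\pi^B_{X,Y}$) from the right $A$-action on $Y$, together with naturality of $\alpha$ and the fact that $\pi^A_{Y, Z}$ annihilates the $A$-balanced difference on $Y \otimes Z$. The universal property of $\pi^A_{X \otimes_B Y, Z}$ then produces $\overline{\alpha}_{X,Y,Z}$ uniquely.

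For invertibility, I would run the identical argument with $\alpha^{-1}_{X,Y,Z}$ in place of $\alpha_{X,Y,Z}$ and with the roles of the two bimodule structures interchanged, producing a candidate inverse $\overline{\beta} \colon X \otimes_B (Y \otimes_A Z) \to (X \otimes_B Y) \otimes_A Z$. To verify $\overline{\beta} \circ \overline{\alpha} = \id$, I would precompose by the epimorphism $\pi^A_{X \otimes_B Y, Z} \circ (\pi^B_{X,Y} \otimes \id_Z)$; expanding both sides using the defining commutative diagrams and cancelling $\alpha^{-1} \alpha = \id$ in $\mathcal{C}$, the equality reduces to the identity on the corresponding image. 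Since the precomposed map is epic, this forces $\overline{\beta} \circ \overline{\alpha} = \id$, and the other composition is handled symmetrically.

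The main technical obstacle is the implicit use, at each factorization step, that tensoring by a fixed object on the right or left preserves the coequalizer defining the relative tensor product, so that $\pi^B_{X,Y} \otimes \id_Z$ and $\id_X \otimes \pi^A_{Y,Z}$ really do present the expected coequalizers. This holds automatically in our $\kk$-linear abelian setting whenever $\otimes$ is exact in each variable, which is the case for the (multi-)fusion categories that are the main focus of the paper; once this is granted, everything else reduces to routine diagram-chasing with the pentagon axiom and the module-action axioms.
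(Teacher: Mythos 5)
Your argument is correct and is essentially the proof that the paper delegates to \cite[Exercise~7.8.28]{EGNO}: the paper itself gives no proof of Lemma~\ref{lem:alphabar}, so there is nothing different to compare against. Your two-stage factorization through the coequalizers defining $\otimes_B$ and $\otimes_A$, followed by the mirror construction of the inverse from $\alpha^{-1}$ and cancellation against the epimorphism $\pi^A_{X\otimes_B Y,Z}\circ(\pi^B_{X,Y}\otimes\id_Z)$, is the standard route, and you correctly isolate the one genuine hypothesis needed — that tensoring with a fixed object preserves the defining cokernels (equivalently, biexactness of $\otimes$), which holds in the rigid and fusion settings where the lemma is actually used in this paper.
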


\begin{proposition}
\label{prop:Morita-bimod}
Take two algebras $A$ and $B$ in a fusion category $\C$. Then the following statements hold.
\begin{enumerate}
    \item[(a)] $A$ and $B$ are Morita equivalent  if and only if there exist bimodules $P \in {}_A \C_B$ and $Q \in {}_B \C_A$ so that $P \otimes_B Q \cong A$ in ${}_A \C_A$ and $Q \otimes_A P \cong  B$ in ${}_B \C_B$.
    \smallskip
    \item[(b)] If there exist bimodules $P \in {}_A \C_B$ and $Q \in {}_B \C_A$ along with epimorphisms 
    $$\tau: P \otimes_B Q \twoheadrightarrow A \; \text{in} \; {}_A \C_A \quad \quad \text{and} \quad \quad \mu: Q \otimes_A P \twoheadrightarrow  B \; \text{in} \; {}_B \C_B$$ so that the diagrams $(\ast)$ and $(\ast \ast)$ below commute in $\C$, then the equivalent conditions of part (a) hold.
\end{enumerate}
{\footnotesize
\[
\xymatrix@R-.5pc@C-1.1pc{
(P \otimes_B Q) \otimes_A P \ar[rr]^{\overline{\alpha}} \ar@{->>}[d]^{\tau \otimes_A \id_P} && P \otimes_B (Q \otimes_A P) \ar@{->>}[d]_{\id_P \otimes_B \mu}\\
A \otimes_A P \ar[dr]_{l_P^A} & (\ast) & P \otimes_B B \ar[dl]^{r_P^B}\\
& P&
}
\hspace{.25in}
\xymatrix@R-.5pc@C-1.1pc{
(Q \otimes_A P) \otimes_B Q \ar[rr]^{\overline{\alpha}} \ar@{->>}[d]^{\mu \otimes_B \id_Q} && Q \otimes_A (P \otimes_B Q) \ar@{->>}[d]_{\id_Q \otimes_A \tau}\\
B \otimes_B Q \ar[dr]_{l_Q^B} & (\ast \ast)& Q \otimes_A A \ar[dl]^{r_Q^A}\\
& Q&
}
\]
}
\end{proposition}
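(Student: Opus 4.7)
The plan is to prove (a) first, then deduce (b) by showing its hypotheses force $\tau$ and $\mu$ to be isomorphisms.

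For the forward direction of (a), I would follow the standard recipe: given a $\C$-module equivalence $F \colon \C_A \to \C_B$ with quasi-inverse $G$, set $P := F(A)$ and $Q := G(B)$. The object $P$ already lies in $\C_B$, and acquires a left $A$-action by applying $F$ to the multiplication $m_A \colon A \otimes A \to A$ (viewed as a morphism in $\C_A$ with right $A$-action on the second factor) and composing with the $\C$-module constraint $F(A \otimes A) \cong A \otimes F(A)$; the bimodule axioms follow from the $\C$-module functor coherence. Similarly $Q$ becomes a $B$-$A$-bimodule. The required isomorphisms come from the identification $F(M) \cong M \otimes_A P$ (a consequence of $F$ preserving the coequalizers defining $\otimes_A$), which yields $GF(M) \cong M \otimes_A (P \otimes_B Q)$; setting $M = A$ and using $GF(A) \cong A$ gives $P \otimes_B Q \cong A$ in ${}_A \C_A$, and symmetrically $Q \otimes_A P \cong B$ in ${}_B \C_B$.

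For the reverse direction of (a), I would define $F(M) := M \otimes_A P$ and $G(N) := N \otimes_B Q$, verify these are $\C$-module functors with constraints inherited from the associator $\alpha$ of $\C$, and produce the quasi-inverse natural isomorphism via the chain
\[
GF(M) = (M \otimes_A P) \otimes_B Q \;\xrightarrow{\overline{\alpha}}\; M \otimes_A (P \otimes_B Q) \;\cong\; M \otimes_A A \;\xrightarrow{r^A_M}\; M,
\]
using Lemma~\ref{lem:alphabar} and the given bimodule isomorphism $P \otimes_B Q \cong A$; the isomorphism $FG \cong \id_{\C_B}$ is analogous, yielding the desired $\C$-module equivalence $\C_A \simeq \C_B$.

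For (b), the plan is to show that $\tau$ and $\mu$ are in fact isomorphisms, so that (a) applies to the same pair $(P, Q)$. This is a categorical version of the classical Morita context lemma: in a ring-theoretic context where both connecting maps are surjective and satisfy the mixed associativity conditions $\tau(p \otimes q) p' = p \mu(q \otimes p')$ and $\mu(q \otimes p) q' = q \tau(p \otimes q')$, both maps are automatically bijective. The diagrams $(\ast)$ and $(\ast\ast)$ categorify precisely these two conditions, so the strategy is to adapt the classical proof: starting from $K := \ker(\tau)$, use $(\ast)$ together with surjectivity of $\mu$ to show that the image of $K \otimes_A P$ in $P \otimes_B (Q \otimes_A P)$ vanishes under $\id_P \otimes_B \mu$, and combine with $(\ast\ast)$ to conclude $K = 0$; the argument for $\mu$ is symmetric. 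The hardest step will be this final diagram chase: the classical element-based argument writes the unit of $A$ as $\sum_i \tau(p_i \otimes q_i)$ and manipulates sums, and translating this without elements requires exploiting the right exactness of $\otimes_A$ and $\otimes_B$ to push kernels through the relevant tensor products. I expect this to go through at the level of generality needed here, since the tensor products in the categories of interest (fusion and group-theoretical fusion) are exact.
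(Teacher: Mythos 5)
Your overall strategy for (b) --- show that the hypotheses force $\tau$ and $\mu$ to be isomorphisms, by categorifying the classical surjective-Morita-context lemma --- is exactly the paper's (which phrases it as: ${}_A \C_A$ is abelian and $\tau,\mu$ are epic by hypothesis, so it suffices to show they are monic). For (a) the paper simply cites the statement as well known, so your sketch there is fine. The gap is in your roadmap for the step you yourself flag as hardest. The classical proof that a surjective $\tau$ is injective runs: for $z=\sum_k u_k\otimes v_k$ with $\tau(z)=0$, write $1_A=\sum_i\tau(p_i\otimes q_i)$ using surjectivity of $\tau$ (not of $\mu$), and compute $z=1_A\cdot z=\sum_{i,k}\tau(p_i\otimes q_i)u_k\otimes v_k=\sum_{i,k}p_i\mu(q_i\otimes u_k)\otimes v_k=\sum_{i,k}p_i\otimes q_i\,\tau(u_k\otimes v_k)=\bigl(\sum_i p_i\otimes q_i\bigr)\cdot\tau(z)=0$, using $(\ast)$, then balancing over $B$, then $(\ast\ast)$. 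Categorified, the object to analyze is $(P\otimes_B Q)\otimes_A K$ (or a test object $W$ in place of $K$, as the paper does), which surjects onto $A\otimes_A K\cong K$ because $\tau\otimes_A\id_K$ is epic; the two compatibilities then identify the composite $(P\otimes_B Q)\otimes_A K\twoheadrightarrow K\hookrightarrow P\otimes_B Q$ with one factoring through $\tau|_K=0$. Your proposal instead looks at $K\otimes_A P$ and invokes surjectivity of $\mu$: from $(\ast)$ alone you do get that $K\otimes_A P\to(P\otimes_B Q)\otimes_A P\to P$ vanishes, but that is a statement about $K\otimes_A P$, not about $K$, and there is no way to ``combine with $(\ast\ast)$'' from there without first tensoring back with $Q$ and using surjectivity of $\tau$ --- at which point you have reconstructed the correct argument with different bookkeeping. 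As written, the step would not close.

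A smaller point of comparison: the paper avoids kernels entirely by testing $\tau$ against pairs $g_1,g_2\colon W\to P\otimes_B Q$ with $\tau g_1=\tau g_2$ and cancelling epimorphisms and the isomorphisms $\overline{\alpha}$, $r^A$; this uses only that $-\otimes_A X$ and $-\otimes_B X$ preserve epimorphisms (automatic, since $\otimes_A$ is a cokernel). Your kernel-based variant is equivalent in an abelian category, but your closing appeal to exactness of the tensor product in fusion categories is off target: the proposition is stated and used for a general abelian monoidal category, and the correct argument never needs left-exactness --- only right-exactness and the epimorphism hypotheses. If you repair the main step as above, everything you need is already available at that level of generality.
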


\begin{proof}
(a) This is well-known; see, e.g., \cite[Remark~3.2]{OstrikTG2003} and \cite{FRS-2}.

\smallskip

(b)  Since $\C$ is assumed to be fusion, the category ${}_A \C_A$ is also abelian (see, e.g., \cite[Exercise~7.8.7]{EGNO}). So it suffices to show $\tau$ and $\mu$ are monomorphisms in ${}_A \C_A$ as epic monomorphisms are isomorphisms in abelian categories.  We prove the statement for $\tau$; the proof for $\mu$ will follow similarly.

Take morphisms $g_1, g_2: W \to P \otimes_B Q$ in ${}_A \C_A$ so that $\tau g_1 = \tau g_2$ as morphisms $W \to A$ in ${}_A \C_A$. Consider the following commutative diagram in $\C$, where we suppress the  $\otimes$ symbol in morphisms. We also invoke Lemma~\ref{lem:alphabar} in all of the diagrams below for the existence of the morphism  $\overline{\alpha}$.

{\scriptsize
\[
\hspace{-.1in}
\xymatrix@C-4.5pc@R-.8pc{
(W \otimes_A P) \otimes_B Q \ar[rrrr]^{\overline{\alpha}} \ar[ddd]_{g_i \; \id \; \id} & & & & W \otimes_A (P \otimes_B Q) \ar[r]^{\hspace{.18in}\id \tau} \ar[d]_{g_i \; \id \; \id} & W \otimes_A A \ar[ddd]^{g_i \; \id}\\
& & & &(P \otimes_B Q) \otimes_A (P \otimes_B Q)\ar[d]^{\overline{\alpha}}\ar[rdd]^{\id \; \id \; \tau} & \\
&(P \otimes_B (Q \otimes_A P)) \otimes_B Q \ar[rrd]^{\overline{\alpha}} \ar[dd]^(.7){\id \; \mu \; \id} &&&P \otimes_B (Q \otimes_A (P \otimes_B Q)) \hspace{.5in} \ar[ld]_(.6){\id \; \overline{\alpha}^{-1}} &\\
((P \otimes_B Q) \otimes_A P) \otimes_B Q  \ar[ur]^{\overline{\alpha} \; \id} \ar[dd]_{\tau \; \id \; \id}&  && P \otimes_B ((Q \otimes_A P) \otimes_B Q) \ar[dr]^{\id \; \overline{\alpha}} \ar[dd]_{\id \; \mu \; \id} && (P \otimes_B Q) \otimes_A A \ar[dd]^{\overline{\alpha}}\\
\hspace{.7in} (\ast)&(P \otimes_B B)\otimes_B Q \ar[ddr]_(.4){r_P^B \; \id} \ar[drr]^{\overline{\alpha}} &&  & P \otimes_B (Q \otimes_A (P \otimes_B Q)) \ar[dr]_{ \id \; \id \; \tau}&\\
(A \otimes_A P) \otimes_B Q \ar[drr]_{l_P^A \; \id} &&&P \otimes_B (B \otimes_B Q) \ar[dl]_(.65){\id \; l_Q^B} &(\ast \ast) \hspace{.3in} & P \otimes_B (Q \otimes_A A) \ar[dlll]^{\id \; r_Q^A}\\
&& P \otimes_B Q &&&}
\]
}

Now with the diagram commuting, the assumption $\tau g_1 = \tau g_2$ implies that 
\[
\begin{array}{l}
\smallskip
(\id_P \otimes_B r_Q^A)\;\overline{\alpha}_{P,Q,A}\;(g_1 \otimes_A \id_A)(\id_W \otimes_A \tau)\;\overline{\alpha}_{W,P,Q}\\
= (\id_P \otimes_B r_Q^A)\;\overline{\alpha}_{P,Q,A}\;(g_2 \otimes_A \id_A)(\id_W \otimes_A \tau)\;\overline{\alpha}_{W,P,Q}.
\end{array}
\]
Note that $\id_W \otimes_A \tau$ is an epimorphism as $\tau$ is epic and $\otimes_A$ is right exact in each variable \cite[Exercise 7.8.23]{EGNO}. Therefore, since $\overline{\alpha}$ is an epimorphism by Lemma~\ref{lem:alphabar}, we get that
$$(\id_P \otimes_B r_Q^A)\;\overline{\alpha}_{P,Q,A}\;(g_1 \otimes_A \id_A)
= (\id_P \otimes_B r_Q^A)\;\overline{\alpha}_{P,Q,A}\;(g_2 \otimes_A \id_A).$$
By \cite[Exercise~7.8.22]{EGNO} we have that $r_Q^A$ is an isomorphism in ${}_B \mathcal{C}$, so $\id_P \otimes_B r_Q^A$ is an isomorphism in $\mathcal{C}$ as well. Therefore,
$$\overline{\alpha}_{P,Q,A}\;(g_1 \otimes_A \id_A)
= \overline{\alpha}_{P,Q,A}\;(g_2 \otimes_A \id_A).$$
Finally, by Lemma~\ref{lem:alphabar}, $\overline{\alpha}$ is an isomorphism.
Thus, $(g_1 \otimes_A \id_A) = (g_2 \otimes_A \id_A)$, and $g_1 = g_2$, as desired.
\end{proof}

Part (a) is a generalization of a classical ring theory result, which is presented, e.g., in \cite[Theorem~4.4.5]{Cohn}. The proof of (b) is a generalization of  \cite[Lemma~4.5.2]{Cohn}. Moreover, the result below generalizes the classic result that a $\kk$-algebra $R$ is Morita equivalent to a matrix algebra Mat$_n(R)$ over $\kk$. (Indeed, the classical result is recovered from the following result by letting $\C$ be the fusion category of finite-dimensional $\kk$-vector spaces with $S = R$ and $V  = \kk^{\oplus n}$.)

\begin{proposition} \label{prop:MoritaMat}
Let $\C$ be a fusion category, and take an algebra $S$ in $\C$  and an object $V$ in $\C$. Then,
\begin{enumerate}
   \item  $({}^* \hspace{-.02in} {V}\otimes S) \otimes {V} \in {\sf Alg}(\C)$ with
    $$m_{({}^*\hspace{-.02in}{V} \otimes S) \otimes {V}} = (\id_{{}^*\hspace{-.02in}{V}} \otimes m_S \otimes \id_{V})\alpha_2(r_{{}^*\hspace{-.02in}{V} \otimes S} \otimes \id_{S {V}})(\id_{{}^*\hspace{-.02in}{V} S} \otimes {\sf ev}'_{V} \otimes \id_{S {V}})\alpha_1, \text{ for} $$ 
    \[
    \begin{array}{rl}
    \alpha_1 &=\alpha^{-1}_{{}^*\hspace{-.02in}{V}S,{V} {}^*\hspace{-.02in}{V}, S{V}} (\id_{{}^*\hspace{-.02in}{V} S} \otimes \alpha^{-1}_{{V},{}^*\hspace{-.02in}{V},S{V}})(\id_{{}^*\hspace{-.02in}{V} S {V}} \otimes \alpha_{{}^*\hspace{-.02in}{V},S,{V}})\alpha_{{}^*\hspace{-.02in}{V} S,{V}, {}^*\hspace{-.02in}{V} S{V}},\\
    \alpha_2 &= (\alpha_{{}^*\hspace{-.02in}{V},S,S} \otimes \id_{V})\alpha^{-1}_{{}^*\hspace{-.02in}{V} S, S, {V}}, \text{ and}
    \end{array}
    \]
    $$u_{({}^*\hspace{-.02in}{V} \otimes S) \otimes {V}} =
    (\id_{{}^*\hspace{-.02in}{V}} \otimes u_S \otimes \id_{V})(r_{{}^*\hspace{-.02in}{V}}^{-1} \otimes \id_{V}){\sf coev}'_{V};$$
    \item $S$ and $({}^* \hspace{-.02in}{V} \otimes S) \otimes {V}$ are Morita equivalent as algebras in $\C$.
\end{enumerate}
\end{proposition}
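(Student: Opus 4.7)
I would verify the algebra axioms for $({}^*V\otimes S)\otimes V$ directly. Conceptually, the given $m$ contracts the inner $V\otimes{}^*V$ via ${\sf ev}'_V$ and multiplies the two resulting copies of $S$ via $m_S$, while $u$ creates a pair ${}^*V\otimes V$ via ${\sf coev}'_V$ with $u_S$ inserted in between. Associativity of $m$ then reduces to associativity of $m_S$ together with naturality of $\alpha$, and the two unitality axioms reduce, after cancelling $m_S(u_S\otimes \id)=l_S$ and its companion, to the snake identities for the dual pair $(V,{}^*V)$. By Mac Lane coherence we may suppress the associators $\alpha_1$ and $\alpha_2$ throughout this routine calculation; the formulas in the statement are tailored precisely to make the bookkeeping unambiguous.

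\textbf{Part (2).} Set $A:=S$ and $B:=({}^*V\otimes S)\otimes V$, and take as candidate bimodules
\[
P := S\otimes V \;\in\; {}_A\C_B, \qquad Q := {}^*V\otimes S \;\in\; {}_B\C_A,
\]
where the outer $S$-factors carry the $A$-actions via $m_S$, and the $B$-actions are assembled from ${\sf ev}'_V$ (pairing the inner $V$ against the inner ${}^*V$ of $B$) together with $m_S$. To apply Proposition~\ref{prop:Morita-bimod}(b), define $\tau:P\otimes_B Q\to A$ and $\mu:Q\otimes_A P\to B$ as the unique maps induced, by the universal property of the coequalizer, from
\[
S\otimes V\otimes {}^*V\otimes S \xrightarrow{\id_S\otimes{\sf ev}'_V\otimes \id_S} S\otimes S \xrightarrow{m_S} S
\]
and
\[
{}^*V\otimes S\otimes S\otimes V \xrightarrow{\id\otimes m_S\otimes \id} {}^*V\otimes S\otimes V \;=\; B,
\]
respectively. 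A short check shows that each map coequalizes the defining relations of the corresponding balanced tensor product (using ${\sf ev}'_V$ together with associativity of $m_S$), so the induced maps exist; both admit sections built from $u_S$ and ${\sf coev}'_V$ and so are (split) epimorphisms, and in fact $\mu$ is an isomorphism.

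\textbf{Verification of the hexagons.} The remaining task is to show that the two diagrams $(\ast)$ and $(\ast\ast)$ of Proposition~\ref{prop:Morita-bimod}(b) commute. After precomposing with the canonical projections $\pi^A$ and $\pi^B$, each equation lifts to an identity between two composites of morphisms in $\C$ defined on the five-fold tensor products $S\otimes V\otimes {}^*V\otimes S\otimes V$ and ${}^*V\otimes S\otimes V\otimes {}^*V\otimes S$, respectively. Tracing both composites, each collapses to ``contract the inner $V\otimes {}^*V$ via ${\sf ev}'_V$ and multiply all the $S$-factors via $m_S$'', with equality of the two composites following from a single use of associativity of $m_S$. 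I expect the principal obstacle to be not any deep category theory but the pure bookkeeping of associators, the isomorphism $\overline{\alpha}$ from Definition~\ref{def:alphabar}, and the coequalizer projections $\pi^A,\pi^B$; a string-diagrammatic presentation combined with Mac Lane coherence should keep this manageable. Once the hexagons commute, Proposition~\ref{prop:Morita-bimod}(a) delivers bimodule isomorphisms $P\otimes_B Q\cong A$ and $Q\otimes_A P\cong B$, and hence the Morita equivalence of $S$ and $({}^*V\otimes S)\otimes V$.
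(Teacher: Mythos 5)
Your proposal is correct and follows essentially the same route as the paper: part (1) is the same direct verification reducing to associativity/unitality of $m_S$, naturality of $\alpha$, and the snake identities, and part (2) uses the same bimodules ${}^*V\otimes S$ and $S\otimes V$ with the same contraction/multiplication maps, factored through the balanced tensor products and fed into Proposition~\ref{prop:Morita-bimod}(b). The only differences are cosmetic: you swap the roles of $A$ and $B$ (hence $P\leftrightarrow Q$ and $\tau\leftrightarrow\mu$), and you appeal to coherence to suppress associators where the paper writes out the strict-case diagrams explicitly.
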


\begin{proof}
(a) We leave this to the reader. 

\smallskip

(b) Let $T$ denote the algebra $({}^*\hspace{-.02in}{V} \otimes S) \otimes {V}$ in part (a). Let $P:=  {}^*\hspace{-.02in}{V} \otimes S$ and $Q:= S \otimes {V}$. It follows from the associativity of $m_S$, and naturality of  $\alpha$ and  $r$, that the morphisms

\[\begin{array}{rl}
\smallskip
\lambda_P^T &= (\id_{{}^*\hspace{-.02in}{V}} \otimes m_S)(\id_{{}^*\hspace{-.02in}{V} S} \otimes r_S \otimes \id_S)(\id_{{}^*\hspace{-.02in}{V} S} \otimes {\sf ev}'_{V} \otimes \id_S) \alpha_3\\
\smallskip
\rho_P^S &= (\id_{{}^*\hspace{-.02in}{V}} \otimes m_S)\alpha_{{}^*\hspace{-.02in}{V},S,S}\\
\smallskip
\lambda_Q^S &= (m_S \otimes \id_{V})\alpha^{-1}_{S,S,{V}}\\
\rho_Q^T &=  (m_S \otimes \id_{V})(r_S \otimes \id_{S{V}})(\id_S \otimes {\sf ev}'_{V} \otimes \id_{S{V}}) \alpha_4,\\
\end{array}\]

for 
\[
\begin{array}{rl}
\smallskip
\alpha_3 &= (\id_{{}^*\hspace{-.02in}{V}} \otimes \alpha_{S,{V},{}^*\hspace{-.02in}{V}} \otimes \id_S)(\id_{{}^*\hspace{-.02in}{V}} \otimes \alpha^{-1}_{S{V}, {}^*\hspace{-.02in}{V}, S})\alpha_{{}^*\hspace{-.02in}{V},S{V},{}^*\hspace{-.02in}{V} S} (\alpha_{{}^*\hspace{-.02in}{V}, S,{V}} \otimes \id_{{}^*\hspace{-.02in}{V} S})\\
\alpha_4 &= (\alpha_{S,{V},{}^*\hspace{-.02in}{V}} \otimes \id_{S {V}})(\alpha^{-1}_{S{V}, {}^*\hspace{-.02in}{V}, S} \otimes \id_{V})\alpha^{-1}_{S{V},{}^*\hspace{-.02in}{V} S,{V}},
\end{array}
\] 
imply that $(P, \lambda_P^T,  \rho_P^S) \in {}_T \C_S$ and  $(Q, \lambda_Q^S,  \rho_Q^T) \in {}_S \C_T$. Moreover, consider the morphisms
\smallskip
\[
\begin{array}{rrl}
\smallskip
\widehat{\tau} = &(\id_{{}^*\hspace{-.02in}{V}} \otimes m_S \otimes \id_{V}) (\alpha_{{}^*\hspace{-.02in}{V}, S, S} \otimes \id_{V}) \alpha^{-1}_{{}^*\hspace{-.02in}{V} S, S, {V}} &: P \otimes Q \to T,\\
\widehat{\mu} = &m_S (r_S \otimes \id_S)(\id_S \otimes {\sf ev}'_{V} \otimes \id_S) (\alpha_{S, {V}, {}^*\hspace{-.02in}{V}} \otimes \id_S) \alpha^{-1}_{S{V}, {}^*\hspace{-.02in}{V}, S} &: Q \otimes P \to S.
\end{array}
\]

\smallskip

\noindent It follows from the associativity of $m_S$, and naturality of $\alpha$ and $r$, that $\widehat{\tau} \in {}_T \C_T$ and $\widehat{\mu} \in {}_S \C_S$. It is also clear that $\widehat{\tau}$ and $\widehat{\mu}$ are epimorphisms in $\C$. Moreover, the morphisms factor through epimorphisms $\tau: P \otimes_S Q \to T$ and $\mu: Q \otimes_T P \to S$, respectively, so that $\widehat{\tau} = \tau\; \pi_{P,Q}^S$ and  $\widehat{\mu} = \mu\; \pi_{Q,P}^T$. Indeed, by the naturality of  $\alpha$ and the associativity of $m_S$, we get that $\widehat{\tau}(\rho_P^S \otimes \id_Q) = \widehat{\tau}(\id_P \otimes \lambda_Q^S)\alpha_{P,S,Q}$. So the claim for $\tau$ follows from the definition of $P \otimes_S Q$. Likewise, the claim for $\mu$ holds. 

\pagebreak 

Finally, by Proposition~\ref{prop:Morita-bimod}(b), it suffices to show that $\tau$ and $\mu$ satisfy the diagrams ($\ast$) and ($\ast \ast$) there. We will do so for ($\ast$) in the strict case, and the general case, along with ($\ast \ast$) will hold in a similar manner. The unadorned $\otimes$ symbol in morphisms are suppressed below.

\[
{\scriptsize
\xymatrix@R-.3pc@C-.6pc{
\left( P \otimes_S Q \right) \otimes_T P \ar[rrrr]^{\overline{\alpha}} \ar[ddddd]^{\tau \otimes_T \id_P} &&&& P \otimes_S \left( Q \otimes_T P \right) \ar[ddddd]_{\id_P \otimes_S \mu}\\
& \left( P \otimes_S Q \right) \otimes P \ar[ul]_(.4){\pi^T_{P \otimes_S Q,P}} \ar[ddd]_{\tau \id_P} &  P \otimes_S \left( Q \otimes P \right) \ar[urr]^(.4){\id_P \otimes_S \pi_{Q,P}^T} & P \otimes \left( Q \otimes_T P \right) \ar[ur]_{\pi_{P,Q \otimes_T P}^S} \ar[ddd]^{\id_P \mu}& \\
&& P \otimes Q \otimes P \ar@/^1pc/[ddr]^{\id_P \widehat{\mu}} \ar@/^.5pc/[ul]^{\pi_{P,Q}^S \id_P} \ar[u]_{\pi_{P,Q \otimes P}^S} \ar@/_.5pc/[ur]_{\id_P \pi_{Q,P}^T} \ar@/_1.5pc/[ddl]_{\widehat{\tau} \id_P} \ar@/^1.5pc/[d]_{\id_{{}^* \hspace{-.02in}VSS} {\sf ev}'_V \id_S}  && \\
&& {}^*\hspace{-.02in}VSSS \ar[d]_(.4){\id_{{}^*\hspace{-.02in}V} m_S \id_S} \ar[dr]^(.4){\id_{{}^*\hspace{-.02in}VS} m_S} &&\\
& T \otimes P \ar[r]^{\id_{{}^*\hspace{-.02in}VS} {\sf ev}'_V \id_S} \ar[ddr]_{\lambda_P^T} \ar[dl]_{\pi_{T,P}^T} & {}^*\hspace{-.02in}VSS \ar[d]^(.4){\id_{{}^*\hspace{-.02in}VS }m_S} \ar@/_.75pc/[dd]_(.3){\id_{{}^*\hspace{-.02in}V} m_S}& P \otimes S \ar[dl]_(.4){\id_{{}^*\hspace{-.02in}V} m_S} \ar[ddl]^{\rho_P^S} \ar[dr]_{\pi_{P,S}^S} & \\
T \otimes_T P \ar[drr]_{l_P^T} && {}^*\hspace{-.02in}VS && P \otimes_S S \ar[dll]^{r_P^S} \\
&& P && \\
}}
\]

All regions commute either by the definitions of the maps involved, 
by~\eqref{eq:tenAmap}, or by the associativity of $m_S$.
\end{proof}


\section{A Frobenius monoidal functor $\Phi$ to a category of bimodules} \label{sec:Phi}

 Our main result in this section is that, when $A$ is a special Frobenius algebra in a fusion category $\C$, we  endow the free functor from  $\mathcal{C}$ to the category of $A$-bimodules in $\mathcal{C}$ with a Frobenius monoidal structure. Consider the notation below.

\begin{notation}[$\tilde{\ast}$] \label{not:tilde} 
 Take $A \in {\sf Alg}(\C)$, and take objects $X, X', W \in {}_A \mathcal{C}_A$. For a map $f:X \otimes_A X' \to W$ in ${}_A \mathcal{C}_A$, let $\tilde{f}: X \otimes X' \to W$ denote its lift in $\mathcal{C}$ in the sense that $\tilde{f} = f \; \pi_{X,X'}$.
\end{notation}

Now, we have the following result.

\begin{theorem}[$\Phi$] \label{thm:Phi} 
Take $\C$ to be a fusion category and let $A =(A, m_A,u_A, \Delta_A, \varepsilon_A)$  be a special Frobenius algebra in $\C$.  Then the following functor is Frobenius monoidal:
\begin{align*}
\Phi: \mathcal{C} &\to {}_A \mathcal{C}_A \\
X &\mapsto (A \otimes X) \otimes A \quad \quad \; \text{(as objects)}\\
\varphi &\mapsto (\id_A \otimes \varphi)  \otimes \id_A \quad \text{(as morphisms)}.
\end{align*}
Here, the monoidal structure $\Phi_{X,X'}$ is defined by the lift of $\widetilde{\Phi}_{X,X'}$, that is, $$\widetilde{\Phi}_{X,X'}=\Phi_{X,X'} \; \pi_{\Phi(X),\Phi(X')},$$ with:
\begin{align*}
\widetilde{\Phi}_{X,X'} &= \;
(\alpha_{A,X,X'} \otimes \id_A) (\id_{AX} \otimes \; l_{X'} \otimes \id_A)   (\id_{AX} \otimes   \varepsilon_A  m_A \otimes \id_{X',A})   
 \underline{\alpha},
 \quad \text{for} \\
\underline{\alpha} &:= \; (\id_{AX} \otimes \alpha^{-1}_{A,A,X'} \otimes \id_A) (\alpha_{AX,A,AX'} \otimes \id_A) \alpha^{-1}_{AXA,AX',A},
\end{align*}
and by
$
\Phi_0 = 
(r_A^{-1} \otimes \id_A)  \Delta_A.
$

\smallskip

\noindent Moreover, the comonoidal structure $\Phi^{X,X'}=\pi_{\Phi(X), \Phi(X^\prime)} \; \widetilde{\Phi}^{X,X'}$ is given by 
\begin{align*}
\widetilde{\Phi}^{X,X'} &= \;
\underline{\alpha'} (\id_{AX} \otimes \Delta_Au_A \otimes \id_{X',A})  (\id_{A,X} \otimes l_{X'}^{-1} \otimes \id_A)  (\alpha^{-1}_{A,X,X'} \otimes \id_A),
 \quad \text{for} \\
\underline{\alpha'} &:= \alpha_{AXA,AX',A}  (\alpha^{-1}_{AX,A,AX'} \otimes \id_A)  (\id_{AX} \otimes \alpha_{A,A,X'} \otimes \id_A),
\end{align*}
and by
$
\Phi^0 = m_A  (r_A \otimes \id_A).
$
\end{theorem}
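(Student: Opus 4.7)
\medskip

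\noindent \textbf{Proof proposal.} The plan is to verify in turn that $\Phi$ is a well-defined functor into ${}_A\mathcal{C}_A$, that $(\Phi,\Phi_{*,*},\Phi_0)$ is monoidal, that $(\Phi,\Phi^{*,*},\Phi^0)$ is comonoidal, and finally that the two Frobenius compatibilities of Definition~\ref{def:monfunc}(c) hold. Throughout, I would carry out the computations in the strict case (justified by Mac Lane's coherence theorem) to keep the diagrams readable, and then insert the associators as in the statement at the end. The core idea is that on $\Phi(X)=A\otimes X\otimes A$, the leftmost and rightmost $A$'s carry the bimodule structure (via $m_A$), while the ``middle'' pieces of $\widetilde\Phi_{X,X'}$ and $\widetilde\Phi^{X,X'}$ insert the Frobenius pairing $\varepsilon_A m_A$ and copairing $\Delta_A u_A$ of $A$, respectively; compare Remark~\ref{rem:Frobpairing}.

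First, I would verify that $\Phi(X):=(A\otimes X)\otimes A$ is an $A$-bimodule with $\lambda_{\Phi(X)} = (m_A\otimes \id_X)\otimes \id_A$ (modulo associators) and $\rho_{\Phi(X)}= \id_A\otimes(\id_X\otimes m_A)$, with compatibility from associativity of $m_A$ and naturality of $\alpha$. Then $\Phi(\varphi)$ is clearly a bimodule map by naturality, so $\Phi\colon \mathcal{C}\to{}_A\mathcal{C}_A$ is a functor. Next, to turn the candidate $\widetilde\Phi_{X,X'}$ into an honest morphism $\Phi_{X,X'}\colon \Phi(X)\otimes_A\Phi(X')\to\Phi(X\otimes X')$ (Notation~\ref{not:tilde}), I need to check that $\widetilde\Phi_{X,X'}$ coequalizes $\rho_{\Phi(X)}\otimes\id - (\id\otimes\lambda_{\Phi(X')})\alpha$ from Definition~\ref{def:pi}. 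Schematically, both sides reduce to $\id_A\otimes X\otimes (\varepsilon_A m_A(m_A\otimes\id_A))\otimes X'\otimes\id_A$ on one side and $\id_A\otimes X\otimes (\varepsilon_A m_A(\id_A\otimes m_A))\otimes X'\otimes\id_A$ on the other; associativity of $m_A$ gives equality. A similar argument shows that $\widetilde\Phi^{X,X'}$ is a bimodule map that factors as $\pi_{\Phi(X),\Phi(X')}\circ\widetilde\Phi^{X,X'}$. That $\Phi_0$ and $\Phi^0$ are bimodule maps is immediate from the $A$-bilinearity of $\Delta_A$ and $m_A$ (Frobenius axiom).

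Next I would verify the monoidal axioms. The associativity constraint for $\Phi_{*,*}$ reduces, after pushing $\pi$'s through and using \eqref{eq:tenAmap}, to the identity
\[
\varepsilon_A m_A (\id_A \otimes (\varepsilon_A m_A \otimes \id_A)) \;=\; \varepsilon_A m_A ((\id_A \otimes \varepsilon_A m_A)\otimes \id_A)
\]
between morphisms $A^{\otimes 4}\to \unit$, which follows from associativity of $m_A$. The unitality axioms use $\Phi_0=(r_A^{-1}\otimes\id_A)\Delta_A$; one needs $m_A(\id_A\otimes u_A)=r_A$ paired with the counit axiom $(\varepsilon_A\otimes\id_A)\Delta_A = l_A^{-1}$, which together produce the required identity on $\Phi(X)=A\otimes X\otimes A$. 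The comonoidal axioms follow by the mirror argument, using coassociativity of $\Delta_A$ and the unit/counit conditions.

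The main obstacle, and where the special Frobenius hypothesis is used essentially, is the two Frobenius compatibilities. After expanding $\Phi_{X,X'\otimes X''}$, $\Phi^{X\otimes X',X''}$ and the relevant $\Phi^{X',X''}$, $\Phi_{X,X'}$, each side becomes a long composite in $\mathcal{C}$ that contains the subexpression
\[
(\id_A\otimes \Delta_A u_A\otimes \id_A)\,(\varepsilon_A m_A),
\]
inserted between the middle copies of $A$. The plan is to simplify this subexpression in two ways: on one side using the Frobenius axiom
\[
(m_A\otimes\id_A)\,\alpha^{-1}_{A,A,A}\,(\id_A\otimes\Delta_A) \;=\; \Delta_A m_A,
\]
and on the other side using
\[
(\id_A\otimes m_A)\,\alpha_{A,A,A}\,(\Delta_A\otimes\id_A) \;=\; \Delta_A m_A,
\]
and then collapsing $m_A\Delta_A=\id_A$ (the special condition, Definition~\ref{def:algprop}(d)). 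After these reductions both sides match the composite obtained by simply inserting a single $m_A$ between the adjacent $A$'s coming from $X$ and $X'\otimes X''$ (respectively $X\otimes X'$ and $X''$), verifying the two Frobenius compatibilities of Definition~\ref{def:monfunc}(c). The primary difficulty I anticipate is purely bookkeeping: tracking the many associators around the lifts $\widetilde{\Phi}$ to show that the $\pi$-descended morphisms really agree; I would handle this by drawing the Frobenius equation as a single large commutative diagram with $\pi$ on the outer edges and reducing step by step via \eqref{eq:tenAmap} and the axioms of $A$.
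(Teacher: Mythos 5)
Your overall architecture (check that $\Phi(X)$ is an $A$-bimodule, that the lifts $\widetilde{\Phi}_{X,X'}$ and $\widetilde{\Phi}^{X,X'}$ descend through the coequalizers defining $\otimes_A$, then verify the monoidal, comonoidal and Frobenius axioms) is exactly the paper's, and your treatment of well-definedness and of the associativity and coassociativity constraints is sound. The problem is in where you locate the use of the Frobenius and special hypotheses. For the Frobenius compatibilities of Definition~\ref{def:monfunc}(c), the contraction $\varepsilon_A m_A$ coming from $\Phi_{X,X'}$ (equivalently $\Phi_{X,X'\otimes X''}$) acts on the pair of copies of $A$ sitting between $X$ and $X'$, while the insertion $\Delta_A u_A$ coming from $\Phi^{X',X''}$ (equivalently $\Phi^{X\otimes X',X''}$) acts between $X'$ and $X''$. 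These two positions are separated by the factor $X'$, so the two sides of each Frobenius identity differ only in the order in which two operations on disjoint tensor slots are performed, and they agree for purely formal reasons; this is precisely the content of diagram $(8)$ in the paper's step (j), where ``each square commutes because the maps are applied in different slots.'' There is no subexpression $(\id_A\otimes\Delta_A u_A\otimes\id_A)(\varepsilon_A m_A)$ sitting between ``the middle copies of $A$,'' and the reduction you propose would not go through in any case: the Frobenius axiom turns $(m_A\otimes\id_A)(\id_A\otimes\Delta_A)$ into $\Delta_A m_A$, whereas your composite would produce $\Delta_A u_A\varepsilon_A m_A$, and $u_A\varepsilon_A$ is not the identity, nor is $m_A\Delta_A=\id_A$ of any use there.

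The places where the Frobenius axiom and the special condition are genuinely needed are the unit and counit constraints, which you dismiss as following from elementary identities or ``the mirror argument.'' For unitality, the composite $\Phi(l_X)\,\Phi_{\unit,X}\,(\Phi_0\otimes_A\id_{\Phi(X)})$ produces on the leftmost copies of $A$ the map $(\id_A\otimes\varepsilon_A m_A)(\Delta_A\otimes\id_A)$, which equals $m_A=\lambda_{\Phi(X)}$ only after invoking the Frobenius compatibility $(\id_A\otimes m_A)(\Delta_A\otimes\id_A)=\Delta_A m_A$ together with the counit axiom; the identities you list do not suffice. For counitality, $l^A_{\Phi(X)}(\Phi^0\otimes_A\id)\,\Phi^{\unit,X}$ produces $m_A(m_A\otimes\id_A)(\id_A\otimes\Delta_A u_A)$, which collapses to the identity via the Frobenius compatibility, the unit axiom, and finally $m_A\Delta_A=\id_A$ --- this counit step is in fact the only point of the entire proof where specialness is used (compare the remark following the theorem). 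As written, your proposal leaves counitality essentially unproved while spending the special hypothesis on a simplification that is not valid.
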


\begin{proof} 
We need to verify the following conditions:
\begin{itemize}
    \item[(a)] $\Phi(X)$ is an $A$-bimodule in $\mathcal{C}$;
    \item[(b)] $\Phi_{X, X^\prime}$ is well defined via $\widetilde{\Phi}_{X, X^\prime}$, that is,
    \begin{itemize}
        \item[(b.1)] $\widetilde{\Phi}_{X, X^\prime}\; (\rho^A_{\Phi(X)}\otimes \id_{\Phi(X^\prime)})=\widetilde{\Phi}_{X, X^\prime}\; (\id_{\Phi(X)}\otimes \lambda^A_{\Phi(X^\prime)}) \; \alpha_{\Phi(X), A, \Phi(X^\prime)}$, and
        \item[(b.2)]  $\Phi_{X, X^\prime}$ is an $A$-bimodule map;
    \end{itemize} 
    \item[(c)] $\Phi_0$, $\Phi^{X, X^\prime}$, $\Phi^0$ are $A$-bimodule maps;
    \item[(d)] the associativity, unitality,  coassociativity, and counitality axioms;
    \item[(e)] the Frobenius conditions:
\begin{align*}
&(\Phi_{X,X^\prime} \otimes_A \id_{\Phi(X^{\prime\prime})}) \; (\alpha^A_{\Phi(X),\Phi(X^\prime),\Phi(X^{\prime\prime})})^{-1} \; (\id_{\Phi(X)} \otimes_A \Phi^{X^\prime,X^{\prime\prime}})\\ &=\Phi^{XX^\prime,X^{\prime\prime}} \; \Phi(\alpha_{X,X\prime,X^{\prime\prime}}^{-1}) \; \Phi_{X,X^\prime X^{\prime\prime}}, \\
&(\id_{\Phi(X)} \otimes_A \Phi_{X^\prime,X^{\prime\prime}}) \; \alpha^A_{\Phi(X),\Phi(X^\prime),\Phi(X^{\prime\prime})} \; (\Phi^{X,X^\prime} \otimes_A \id_{\Phi(X^{\prime\prime})})\\ &=\Phi^{X,X^\prime X^{\prime\prime}} \; \Phi(\alpha_{X,X^\prime,X^{\prime\prime}})\; \Phi_{XX^\prime,X^{\prime\prime}}.
\end{align*}
\end{itemize}

We provide some details here, but most  of the details will be left to the reader. Note that in the diagrams below, we will omit the $\otimes$ symbol in the nodes and arrows, and also omit parentheses in the arrows,  to make them more compact.

\smallskip

\noindent (a) The right and left $A$-module structure of $\Phi(X)=(A\otimes X)\otimes A$ are given by 
\begin{align*}
    \rho^A_{\Phi(X)}
    &:=(\id_{AX}\otimes m_A)\; \alpha_{AX, A, A},\\
    \lambda^A_{\Phi(X)}&:=((m_A\otimes \id_{X})\; \alpha^{-1}_{A, A, X}\otimes \id_A) \; \alpha^{-1}_{A, AX, A},
\end{align*}
respectively.  We leave the details for the verification of the left $A$-module condition, right $A$-module structure, and the $A$-bimodule compatibility to the reader.
     
\smallskip
     
\noindent (b.1) We obtain that $\widetilde{\Phi}_{X, X^\prime}(\rho_{\Phi(X)}\otimes \id_{\Phi(X^\prime)})=\widetilde{\Phi}_{X, X^\prime}(\id_{\Phi(X)}\otimes \lambda_{\Phi(X^\prime)})\alpha_{\Phi(X), A, \Phi(X^\prime)}$ in due to the associativity of $m_A$.  Therefore,  $\Phi_{X, X^\prime}: \Phi(X)\otimes_A\Phi(X^\prime)\to \Phi(X\otimes X^\prime)$ is a unique map  such that  $\widetilde{\Phi}_{X, X^\prime}=\Phi_{X, X^\prime} \; \pi_{\Phi(X), \Phi(X^\prime)}$. 

\smallskip

\noindent (b.2) Let us prove that $\Phi_{X, X^\prime}$ is a right $A$-module map when $\mathcal{C}$ is strict. The rest of the proof, including the non-strict case, is left to the reader. Consider the following diagram.
\[ 
{\footnotesize
\begin{array}{ccc} 
\xymatrix@R-.9pc@C+2pc{
(\Phi(X)\otimes_A\Phi(X^\prime)) A\ar @{} [drr] |{\hspace{.25in}(1)}\ar @{} [ddr] |{\hspace{-.3in}(2)}\ar[ddd]_{\rho_{\Phi(X)\otimes_A\Phi(X^\prime)}}\ar[rr]^{\Phi_{X, X^\prime}\id_{A}} &  &\Phi(X X^\prime) A\ar[ddd]^{\rho_{\Phi(X X^\prime)}}\ar @{} [ddl] |{\hspace{.3in}(3)}\\
 & \Phi(X)\Phi(X^\prime) A\ar[lu]_(.4){\pi_{\Phi(X),\Phi(X^\prime)}\id_A}\ar[ur]^(.4){\widetilde{\Phi}_{X, X^\prime}\id_A}\ar[d]_{\id_{\Phi(X)}\rho_{\Phi(X^\prime)}} & \\
 & \Phi(X)\Phi(X^\prime) \ar[dl]_{\pi_{\Phi(X), \Phi(X^\prime)}} \ar[dr]^{\widetilde{\Phi}_{X, X^\prime}} & \\
\Phi(X)\otimes_A\Phi(X^\prime)\ar[rr]_{\Phi_{X, X^\prime}}\ar @{} [urr] |{\hspace{.25in}(4)} & & \Phi(X X^\prime)
} 
\end{array}
}
\]  
We have that $(1)$ and $(4)$ commute by the definition of $\widetilde{\Phi}_{X, X^\prime}$, and $(2)$ commutes by the definition of $\rho_{\Phi(X)\otimes_A \Phi(X^\prime)}$. Moreover, $(3)$ clearly commutes. 

\smallskip

\noindent (c) We get that $\Phi_0$ is a right $A$-module map when $\mathcal{C}$ is strict because $A$ is Frobenius, and we leave the rest to the reader.

\smallskip

\noindent (d) We leave these details to the reader.

\smallskip

\noindent (e) Let us check that one of the Frobenius conditions holds for $\mathcal{C}$ strict; the rest is left to the reader. Consider the diagram below.  
\begin{equation*}
\hspace{-.1in}
\resizebox{\displaywidth}{!}{
\xymatrix@R+1pc{
    \Phi(X)\otimes_A\Phi(X^\prime X^{\prime\prime})\ar[rrr]^{\Phi_{X, X^\prime X^{\prime\prime}}}\ar[ddd]^{\id_{\Phi(X)}\otimes_A\Phi^{X^\prime, X^{\prime\prime}}}\ar @{} [drr] |{\hspace{.8in}(1)} \ar @{} [dddr] |{\hspace{.2in}(2)}\ar @{} [dddrrr] |{\hspace{-.6in}(6)}& & & \Phi(XX^\prime X^{\prime\prime})\ar[ddddd]^{\Phi^{XX^\prime, X^{\prime\prime}}}\ar[lddd]^{\widetilde{\Phi}^{XX^\prime, X^{\prime\prime}}} \\
     &\Phi(X)\Phi(X^\prime X^{\prime\prime})\ar[lu]_{\pi_{\Phi(X), \Phi(X^\prime X^{\prime\prime})}}\ar[rru]^{\widetilde{\Phi}_{X, X^\prime X^{\prime\prime}}} \ar[d]_{\id_{\Phi(X)}\Phi^{X^\prime X^{\prime\prime}}}\ar@/^5.5pc/[dd]^{\id_{\Phi(X)}\widetilde{\Phi}^{X^\prime X^{\prime\prime}}}\ar @{} [drr] |{(8)}& & \\
     & \Phi(X)(\Phi(X^\prime)\otimes_A\Phi(X^{\prime\prime}))\ar@/_.4pc/[ld]_{\pi_{\Phi(X),\Phi(X^\prime)\otimes_A\Phi(X^{\prime\prime})\hspace{.2in}}} & & \\
    \Phi(X)\otimes_A(\Phi(X^\prime)\otimes_A\Phi(X^{\prime\prime}))\ar[dd]^{{\alpha^A_{\Phi(X), \Phi(X^\prime), \Phi(X^{\prime\prime})}}^{-1}} \ar @{} [r] |{\hspace{.4in}(3)} &\Phi(X)\Phi(X^\prime)\Phi(X^{\prime\prime})\ar[u]^(.4){\id_{\Phi(X)}\pi_{\Phi(X^\prime), \Phi(X^{\prime\prime})}} \ar[d]_{\pi_{\Phi(X), \Phi(X^{\prime})}\id_{\Phi(X^{\prime\prime})}}\ar[r]^{\widetilde{\Phi}_{X, X^\prime}\id_{\Phi(X^{\prime\prime})}} & \Phi(XX^\prime)\Phi(X^{\prime\prime})\ar[rdd]^{\pi_{\Phi(XX^\prime), \Phi(X^{\prime\prime})}}& \\
    &(\Phi(X)\otimes_A\Phi(X^\prime))\Phi(X^{\prime\prime})\ar[ru]_{\hspace{.2in}\Phi_{X, X^\prime}\id_{\Phi(X^{\prime\prime})}}\ar[ld]^{\hspace{.4in}\pi_{\Phi(X)\otimes_A\Phi(X^\prime),\Phi(X^{\prime\prime})}} & & \\
   (\Phi(X)\otimes_A\Phi(X^\prime))\otimes_A\Phi(X^{\prime\prime})\ar[rrr]_{\Phi_{X, X^\prime}\otimes_A\id_{\Phi(X^{\prime\prime})}} \ar @{} [urrr] |{\hspace{1.4in}(4)}\ar @{} [uuurrr] |{\hspace{-.4in}(5)} & & & \Phi(XX^\prime)\otimes_A\Phi(X^{\prime\prime}),\ar @{} [luuuu] |{\hspace{.4in}(7)}}
}\end{equation*} 
 The diagrams $(2)$ and $(4)$ commute from \eqref{eq:tenAmap}, and  $(3)$ commutes from the definition of the associativity constraint $\alpha^A$. Moreover, $(1)$ and  $(5)$ commute from the definition of $\Phi_{\ast,\ast}$, and $(6)$ and $(7)$ commute from the definition of $\Phi^{\ast,\ast}$. Lastly, $(8)$ is the following diagram:
\[
\hspace{-.15in}{\footnotesize
\begin{array}{ccc}
\xymatrix@C+.2pc@R+.5pc{
     AXAAX^\prime X^{\prime\prime}A \ar[d]^{\id_{AXAAX^\prime} u_A\id_{X^{\prime\prime }A}}\ar[rr]^-{\id_{AX}m_A\id_{X^\prime X^{\prime\prime}A}}&  &  AXAX^\prime X^{\prime\prime}A \ar[d]^{\id_{AXAX^\prime}u_A\id_{X^{\prime\prime} A}}\ar[rr]^-{\id_{AX}\varepsilon_A\id_{X^\prime X^{\prime\prime}A}}& & AXX^\prime X^{\prime\prime}A \ar[d]^{\id_{AXX^\prime}u_A\id_{X^{\prime\prime}A}}\\
AXAAX^\prime AX^{\prime\prime}A\ar[rr]^-{\id_{AX}m_A\id_{X^\prime AX^{\prime\prime}A}}\ar[d]^{\id_{AXAAX^\prime}\Delta_A\id_{X^{\prime\prime}A}}& & AXAX^\prime AX^{\prime\prime}A\ar[d]^{\id_{AXAX^\prime}\Delta_A\id_{X^{\prime\prime}A}}\ar[rr]^{\id_{AX}\varepsilon_A\id_{X^{\prime}AX^{\prime\prime}A}} & & AXX^\prime AX^{\prime\prime}A\ar[d]^{\id_{AXX^\prime}\Delta_A\id_{X^{\prime\prime}A}}\\
AXAAX^\prime AAX^{\prime\prime}A\ar[rr]^{\id_{AX}m_A\id_{X^\prime AAX^{\prime\prime}A}}& &AXAX^\prime AAX^{\prime\prime}A\ar[rr]^{\id_{AX}\varepsilon_A\id_{X^\prime AAX^{\prime\prime}A}} & & AXX^\prime AAX^{\prime\prime} A 
}
\end{array}}
\] 
where each square commutes because the maps are applied in different slots. 
\end{proof}

\begin{remark} \label{rem:Phi}
In the theorem above we gave the free functor $\Phi: \C \to {}_A \C_A$ the structure of a Frobenius monoidal functor when the  algebra $A$ is special Frobenius. 
\begin{enumerate}
    \item Observe that $\Phi$ is not strong monoidal  if $A \not \cong \unit_\C$.
    \smallskip
    \item In the proof above, we did not need the full requirement that $A$ is special; we only used the condition that $m_A \Delta_A = \id_A$.
    \smallskip
    \item It is natural to consider connections to its (left or right) adjoint, the forgetful functor $U: {}_A \C_A \to \C$. We have that $U$ is Frobenius in the sense that its left and right adjoint are isomorphic (see, e.g., \cite[Lemma~2.1]{Shimizu2017}). It is discussed when $U$ admits a Frobenius monoidal  structure in \cite[Theorem~6.2]{BT2015}; see also \cite[Lemma~6.4]{Sz}.
\end{enumerate}
\end{remark}

In fact, we will employ the forgetful functor $U$ in the next section to study the Morita equivalence of algebras in ${}_A \C_A$.

\smallskip

\section{Morita equivalence of algebras in a category of bimodules} \label{sec:MoritaPhi}

In this section, recall that $\C$ is a fusion category,   and take $A$  a connected, special Frobenius algebra in $\C$.  Our main result is on the Morita equivalence of algebras in the monoidal category of bimodules ${}_A \C_A$,  given in Theorem~\ref{thm:Morita-updown} below. To begin, consider the following result and terminology.
In its proof, we use some auxiliary results included in the Appendix.
\begin{theorem} \label{thm:Moritapres}
Let $(\mathcal{S}, \otimes_{\mathcal{S}})$ and $(\mathcal{T}, \otimes_{\mathcal{T}})$ be fusion categories. Take a monoidal functor $\Gamma: \mathcal{S} \to \mathcal{T}$ that preserves epimorphisms and so that the natural transformation $\Gamma_{*,*}$ of $\Gamma$ is an epimorphism. If $S$ and $S'$ are Morita equivalent algebras in $\mathcal{S}$, then $\Gamma(S)$ and $\Gamma(S')$ are Morita equivalent algebras in $\mathcal{T}$.
\end{theorem}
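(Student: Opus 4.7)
The plan is to use the characterization of Morita equivalence via bimodules from Proposition~\ref{prop:Morita-bimod} and transport the required data through $\Gamma$. By Proposition~\ref{prop:Morita-bimod}(a), the Morita equivalence of $S$ and $S'$ in $\mathcal{S}$ yields bimodules $P \in {}_S \mathcal{S}_{S'}$ and $Q \in {}_{S'} \mathcal{S}_S$ together with isomorphisms $f: P \otimes_{S'} Q \xrightarrow{\sim} S$ in ${}_S \mathcal{S}_S$ and $g: Q \otimes_S P \xrightarrow{\sim} S'$ in ${}_{S'} \mathcal{S}_{S'}$. Since $\Gamma$ is monoidal, Proposition~\ref{prp:monfunctor}(a) gives that $\Gamma(S), \Gamma(S')$ are algebras in $\mathcal{T}$, and a directly analogous construction promotes $\Gamma(P)$ to a $(\Gamma(S), \Gamma(S'))$-bimodule in $\mathcal{T}$ via the actions
$$\lambda_{\Gamma(P)} = \Gamma(\lambda_P^S)\, \Gamma_{S,P}, \qquad \rho_{\Gamma(P)} = \Gamma(\rho_P^{S'})\, \Gamma_{P,S'},$$
and likewise for $\Gamma(Q)$; the bimodule axioms follow from the corresponding ones in $\mathcal{S}$ combined with the associativity constraint of $\Gamma_{*,*}$.

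My plan is then to apply Proposition~\ref{prop:Morita-bimod}(b) to $\Gamma(P)$ and $\Gamma(Q)$. I would first build candidate epimorphisms $\tau$ and $\mu$. Consider the composite in $\mathcal{T}$
$$\widetilde{\tau}:\; \Gamma(P) \otimes_{\mathcal{T}} \Gamma(Q) \xrightarrow{\;\Gamma_{P,Q}\;} \Gamma(P \otimes_{\mathcal{S}} Q) \xrightarrow{\;\Gamma(\pi^{S'}_{P,Q})\;} \Gamma(P \otimes_{S'} Q) \xrightarrow{\;\Gamma(f)\;} \Gamma(S).$$
Each of these factors is an epimorphism: $\Gamma_{P,Q}$ by hypothesis, $\Gamma(\pi^{S'}_{P,Q})$ because $\pi^{S'}_{P,Q}$ is an epimorphism (being a cokernel) and $\Gamma$ preserves epimorphisms, and $\Gamma(f)$ because it is an isomorphism. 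So $\widetilde{\tau}$ is epic. Next I would show that $\widetilde{\tau}$ coequalizes the two maps defining $\pi^{\Gamma(S')}_{\Gamma(P), \Gamma(Q)}$, namely $\rho_{\Gamma(P)} \otimes \id$ and $(\id \otimes \lambda_{\Gamma(Q)}) \circ \alpha$. This follows by unpacking the bimodule actions on $\Gamma(P), \Gamma(Q)$, using naturality of $\Gamma_{*,*}$ and its associativity constraint, to reduce to the fact that $f \circ \pi^{S'}_{P,Q}$ equalizes $\rho_P^{S'} \otimes \id$ and $(\id \otimes \lambda_Q^{S'}) \circ \alpha$ in $\mathcal{S}$, then applying $\Gamma$. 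By the universal property of the cokernel $\pi^{\Gamma(S')}_{\Gamma(P),\Gamma(Q)}$, we obtain the desired epimorphism $\tau:\Gamma(P)\otimes_{\Gamma(S')}\Gamma(Q)\twoheadrightarrow\Gamma(S)$, and a parallel construction yields $\mu$.

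The remaining task is to verify that $\tau$ and $\mu$ are $\Gamma(S)$- and $\Gamma(S')$-bimodule maps and that the diagrams $(\ast)$ and $(\ast\ast)$ of Proposition~\ref{prop:Morita-bimod}(b) commute. The bimodule property of $\tau$ follows by checking after precomposition with the epimorphism $\pi^{\Gamma(S')}_{\Gamma(P),\Gamma(Q)}$, where it reduces (via naturality of $\Gamma_{*,*}$) to the bimodule property of $f$ in $\mathcal{S}$. For the coherence diagrams, I would precompose the perimeter of each diagram with the triple tensor epimorphism (obtained by stacking $\Gamma_{*,*}$'s and $\pi$'s), thereby lifting the diagram from ${}_{\Gamma(S)}\mathcal{T}_{\Gamma(S)}$ up to $\mathcal{T}$, and then recognize it as the image under $\Gamma$ of the analogous diagram in $\mathcal{S}$. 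The latter diagram commutes automatically because $f, g$ are bimodule isomorphisms providing the Morita equivalence of $S$ and $S'$ in $\mathcal{S}$. Once $(\ast)$ and $(\ast\ast)$ are established, Proposition~\ref{prop:Morita-bimod}(b) yields the Morita equivalence of $\Gamma(S)$ and $\Gamma(S')$ in $\mathcal{T}$.

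The main obstacle I expect is the coherence bookkeeping in the last step: each appearance of $\overline{\alpha}$ in diagrams $(\ast), (\ast\ast)$ must be traced through the associativity constraint of $\Gamma_{*,*}$ and the associativity of $\otimes_{\mathcal{T}}$, and one must carefully distinguish between the comparison morphisms $\Gamma(\pi^B_{X,Y}) \circ \Gamma_{X,Y}$ in $\mathcal{T}$ and the projections $\pi^{\Gamma(B)}_{\Gamma(X),\Gamma(Y)}$. The fact that both $\Gamma_{*,*}$ and $\Gamma$-images of $\pi$'s are epimorphic is what makes these universal-property arguments go through cleanly, avoiding the need to know that $\Gamma$ actually preserves relative tensor products up to isomorphism.
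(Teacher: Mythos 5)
Your proposal is correct and follows essentially the same route as the paper: it invokes Proposition~\ref{prop:Morita-bimod}(a) to extract the bimodules and isomorphisms in $\mathcal{S}$, transports them along $\Gamma$ with exactly the actions $\Gamma(\lambda)\,\Gamma_{*,*}$ and $\Gamma(\rho)\,\Gamma_{*,*}$, builds the epimorphisms $\widehat{\tau},\widehat{\mu}$ as the composite $\Gamma(\text{iso})\circ\Gamma(\pi)\circ\Gamma_{*,*}$, factors them through the relative tensor products via the coequalizer property, and closes with Proposition~\ref{prop:Morita-bimod}(b) after lifting the diagrams $(\ast)$, $(\ast\ast)$ along the epimorphisms. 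The coherence bookkeeping you flag at the end is precisely what the paper relegates to Propositions~\ref{prop:MoritaPhi-1} and~\ref{prop:MoritaPhi-2} in Appendix~\ref{sec:MoritaPhi-appendix}.
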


\begin{proof}
By Proposition~\ref{prop:Morita-bimod}(a), we have bimodules \[
\begin{array}{llll}
\medskip
(\overline{P},  &\lambda_{\overline{P}}^S: S \otimes_{\mathcal{S}} \overline{P} \to \overline{P},  &\rho_{\overline{P}}^{S'}:  \overline{P} \otimes_{\mathcal{S}} S' \to \overline{P}) &\in {}_S \mathcal{S}_{S'},\\
(\overline{Q},  &\lambda_{\overline{Q}}^{S'}:  S' \otimes_{\mathcal{S}} \overline{Q} \to \overline{Q},  &\rho_{\overline{Q}}^{S}: \overline{Q} \otimes_{\mathcal{S}} S \to \overline{Q}) &\in {}_{S'} \mathcal{S}_{S},
\end{array}
\] 
equipped with isomorphisms $\overline{\tau}: \overline{P} \otimes_{S'} \overline{Q} \overset{\sim}{\to} S$ in ${}_S \mathcal{S}_{S}$ and $\overline{\mu}: \overline{Q} \otimes_{S} \overline{P} \overset{\sim}{\to} S'$ in ${}_{S'} \mathcal{S}_{S'}$. Take $$P:=\Gamma(\overline{P}), \quad \quad Q:=\Gamma(\overline{Q}).$$
By Proposition~\ref{prop:MoritaPhi-1}, we obtain the bimodules $(P, \lambda_{P}^{\Gamma(S)}, \;  \rho_{P}^{\Gamma(S')}) \in {}_{\Gamma(S)}\mathcal{T}_{\Gamma(S')}$ and 
$(Q, \lambda_{Q}^{\Gamma(S')}, \;  \rho_{Q}^{\Gamma(S)}) \in {}_{\Gamma(S')}\mathcal{T}_{\Gamma(S)}$, where 
\[
\hspace{-.05in}{\small
\begin{array}{ll}
\medskip
\lambda_{P}^{\Gamma(S)}= \Gamma(\lambda_{\overline{P}}^S) \; \Gamma_{S, \overline{P}}: \Gamma(S) \otimes_{\mathcal{T}} P \to P, &
\rho_{P}^{\Gamma(S')}= \Gamma(\rho_{\overline{P}}^{S'}) \; \Gamma_{\overline{P},S'}:  P \otimes_{\mathcal{T}} \Gamma(S') \to P,\\
\lambda_{Q}^{\Gamma(S')}= \Gamma(\lambda_{\overline{Q}}^{S'}) \; \Gamma_{S',\overline{Q}}: \Gamma(S') \otimes_{\mathcal{T}} Q \to Q, &
\rho_{Q}^{\Gamma(S)}= \Gamma(\rho_{\overline{Q}}^{S}) \; \Gamma_{\overline{Q},S}:  Q \otimes_{\mathcal{T}} \Gamma(S) \to Q.
\end{array}
}
\]

Consider the morphisms, where  $\otimes:= \otimes_{\mathcal{S}}$ below:
\[
\begin{array}{c}
\medskip
\widehat{\tau}:= \Gamma(\overline{\tau}) \; \Gamma(\pi_{\overline{P},\overline{Q}}^{S'}) \; \Gamma_{\overline{P},\overline{Q}}: P \otimes_{\mathcal{T}} Q \to \Gamma(\overline{P} \otimes
\overline{Q}) \to \Gamma(\overline{P} \otimes_{S'} \overline{Q}) \to \Gamma(S),\\
\widehat{\mu}:= \Gamma(\overline{\mu}) \; \Gamma(\pi_{\overline{Q},\overline{P}}^{S}) \; \Gamma_{\overline{Q},\overline{P}}: Q \otimes_{\mathcal{T}} P \to \Gamma(\overline{Q} \otimes
\overline{P}) \to \Gamma(\overline{Q} \otimes_{S} \overline{P}) \to \Gamma(S').
\end{array}
\]
Both $\widehat{\tau}$ and $\widehat{\mu}$ are epimorphisms (in ${}_{\Gamma(S)}\mathcal{T}_{\Gamma(S)}$ and ${}_{\Gamma(S')}\mathcal{T}_{\Gamma(S')}$, respectively) because the morphisms $\overline{\tau}$, $\overline{\mu}$, $\pi_{*,*}$ are each epic,  the natural transformation $\Gamma_{*,*}$ of $\Gamma$ is an epimorphism, and $\Gamma$ preserves epimorphisms by assumption. Moreover, the epimorphisms  $\widehat{\tau}$ and $\widehat{\mu}$ factor through epimorphisms  
\[
\begin{array}{ll}
\medskip
\tau: P \otimes_{\Gamma(S')} Q \twoheadrightarrow \Gamma(S) &\in {}_{\Gamma(S)}\mathcal{T}_{\Gamma(S)},\\
\mu: Q \otimes_{\Gamma(S)} P \twoheadrightarrow \Gamma(S') &\in {}_{\Gamma(S')}\mathcal{T}_{\Gamma(S')},
\end{array}
\]
so that 
\begin{align}\label{Tau hat - Mu hat}
\widehat{\tau} = \tau \; \pi_{P,Q}^{\Gamma(S')},\hspace{0.5in} \widehat{\mu} = \mu \; \pi_{Q,P}^{\Gamma(S)}.\end{align} 
Indeed, $$\widehat{\tau}(\rho_P^{\Gamma(S')} \otimes \id_Q) = \widehat{\tau}(\id_P \otimes \lambda_Q^{\Gamma(S')})\alpha_{P,\Gamma(S'),Q},$$ which is verified by the commutative diagram below in the strict case. The regions commute due to the monoidal  structure of $\Gamma$ and by the definitions of $\rho_P^{\Gamma(S')}$, of $\lambda_{\overline{Q}}^{S'}$, of $\widehat{\tau}$, and of $P \otimes_{\Gamma(S')} Q$. Here, $\otimes:= \otimes_{\mathcal{S}}$ in the diagram below.
\[
{\scriptsize
\xymatrix@C+1pc{
P \otimes_\mathcal{T} \Gamma(S') \otimes_\mathcal{T} Q 
\ar@/^1.5pc/[rrr]^{\id_P\; \otimes_\mathcal{T} \;\lambda_Q^{\Gamma(S')}}
\ar[r]_{\id_P \otimes_\mathcal{T} \Gamma_{S',\overline{Q}}}
\ar@/_4.2pc/[ddd]^{\rho_P^{\Gamma(S')}  \otimes_\mathcal{T}  \id_Q}
\ar[d]^{\Gamma_{\overline{P},S'}  \otimes_\mathcal{T} \id_Q}
& P \otimes_\mathcal{T} \Gamma(S' \otimes \overline{Q})
\ar[rr]_{\id_P \; \otimes_\mathcal{T} \; \Gamma(\lambda_{\overline{Q}}^{S'})}
\ar[d]^{\Gamma_{\overline{P},S' \otimes \overline{Q}}}
&& P \otimes_\mathcal{T} Q
\ar[d]_{\Gamma_{\overline{P},\overline{Q}}}
\ar@/^2.2pc/[ddd]_{\widehat{\tau}}\\
\Gamma(\overline{P} \otimes S') \otimes_\mathcal{T} Q
\ar[r]^{\Gamma_{\overline{P} \otimes S', \overline{Q}}}
\ar[dd]^{\Gamma(\rho_{\overline{P}}^{S'})  \otimes_\mathcal{T}  \id_Q}
& \Gamma(\overline{P} \otimes S' \otimes \overline{Q})
\ar[rr]^{\Gamma(\id_{\overline{P}} \; \otimes \; \lambda_{\overline{Q}}^{S'})}
\ar[dd]^{\Gamma(\rho_{\overline{P}}^{S'} \; \otimes \; \id_{\overline{Q}})}
&& \Gamma(\overline{P} \otimes \overline{Q})
\ar[d]_{\Gamma(\pi_{\overline{P}, \overline{Q}}^{S'})}\\
&&&\Gamma(\overline{P} \otimes_{S'} \overline{Q})
\ar@{=}[dl]
\ar[d]_{\Gamma(\overline{\tau})} \\
P \otimes_\mathcal{T} Q 
\ar[r]^{\Gamma_{\overline{P}, \overline{Q}}}
\ar@/_1pc/[rrr]_{\widehat{\tau}}
& \Gamma(\overline{P} \otimes \overline{Q})
\ar[r]^{\Gamma(\pi_{\overline{P}, \overline{Q}}^{S'})}
& \Gamma(\overline{P} \otimes_{S'} \overline{Q}) 
\ar[r]^{\Gamma(\overline{\tau})} 
& \Gamma(S)
}
}
\]
So the epimorphism $\tau$ exists by Definition~\ref{def:pi}. Likewise,  the epimorphism $\mu$ exists.  Finally, $\tau$ and $\mu$ satisfy diagrams $(\ast)$ and $(\ast \ast)$ in Proposition~\ref{prop:Morita-bimod}(b) by Proposition~\ref{prop:MoritaPhi-2}.
Therefore, by Proposition~\ref{prop:Morita-bimod}(b), the algebras $\Gamma(S)$ and $\Gamma(S')$ are Morita equivalent in $\mathcal{T}$.
\end{proof}

\begin{definition} \label{def:Moritapres}
We call a monoidal  functor $\Gamma: \mathcal{S} \to \mathcal{T}$ {\it Morita preserving} if it satisfies the conclusion of Theorem~\ref{thm:Moritapres}.
\end{definition}

  Consider the following notation.

\begin{notation}[$E:=E(A)$] \label{not:Sec4}
Take an algebra $A$ in a fusion category $\C$ and denote by $E:=E(A)$ the internal End object that represents the functor $${}_A \C_A \to   {\sf Set}, \quad X \mapsto {\sf Hom}_{\C_A}(A \otimes_A X, A);$$ see \cite[Section~7.9]{EGNO}. Namely, we take $M_1=M_2=A$ in \cite[(7.20)]{EGNO}. 
\end{notation}

\begin{proposition} \label{prop:EndFrob}
If $A$ is a  Frobenius algebra in $\C$, then
 $E$  is a  Frobenius algebra in ${}_A \C_A$. If, further, $A$ is  connected and special, then so is  $E$. In particular, $E$ is indecomposable.
\end{proposition}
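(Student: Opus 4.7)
The plan is to identify the internal End object $E(A)$ with $\Phi(\unit)$ from Theorem~\ref{thm:Phi}, and then transport the Frobenius structure. Since $A$ is the monoidal unit of ${}_A\C_A$, the defining property of $E(A)$ reduces to $\Hom_{{}_A\C_A}(X, E(A)) \cong \Hom_{\C_A}(X, A)$, so $E(A)$ is the image of $A$ under the right adjoint to the forgetful functor $U \colon {}_A\C_A \to \C_A$. I would show that this right adjoint sends $A$ to $\Phi(\unit) = (A \otimes A,\; m_A \otimes \id_A,\; \id_A \otimes m_A)$ by exhibiting the natural bijection that sends a right $A$-module map $f \colon X \to A$ to the bimodule map
\[
\tilde{f} \;=\; (\id_A \otimes f)\, (\id_A \otimes \lambda_X^A)\, (\Delta_A u_A \otimes \id_X)\, l_X^{-1},
\]
with inverse $\tilde{f} \mapsto (\varepsilon_A \otimes \id_A)\, \tilde{f}$. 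Verifying naturality and mutual invertibility uses only the Frobenius axioms of $A$ together with the bimodule property of $\Delta_A$; specialness is not needed.

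With the identification $E(A) \cong \Phi(\unit)$ in hand, the special Frobenius case follows from the toolkit already built. By Theorem~\ref{thm:Phi}, $\Phi$ is a Frobenius monoidal functor, and since $\unit \in \C$ is trivially a Frobenius algebra, Proposition~\ref{prp:monfunctor}(c) endows $\Phi(\unit) \cong E(A)$ with a Frobenius algebra structure in ${}_A\C_A$. For the additional special property, I would substitute the explicit formulas for $\Phi_{\unit,\unit}, \Phi_0, \Phi^{\unit,\unit}, \Phi^0$ from Theorem~\ref{thm:Phi} and verify that $m_{E(A)}\, \Delta_{E(A)} = \id_{E(A)}$ and $\varepsilon_{E(A)}\, u_{E(A)} \in \kk^{\times}\, \id_A$; both computations telescope once one invokes $m_A \Delta_A = \id_A$ and $\varepsilon_A u_A \in \kk^\times$ on $A$.

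The general Frobenius case requires a separate argument, since Theorem~\ref{thm:Phi} needs $A$ to be special. Here I would endow $E(A) \cong A \otimes A$ directly with a Frobenius structure via Remark~\ref{rem:Frobpairing}. The algebra structure is inherited from the internal End (composition and identity). For the coalgebra side, define the pairing $p \colon E(A) \otimes_A E(A) \to A$ as the composite
\[
(A \otimes A) \otimes_A (A \otimes A) \;\cong\; A \otimes A \otimes A \;\xrightarrow{\id_A \otimes \varepsilon_A \otimes \id_A}\; A \otimes A \;\xrightarrow{m_A}\; A,
\]
with copairing $q \colon A \to E(A) \otimes_A E(A)$ obtained by inflating the Frobenius coform $\Delta_A u_A \colon \unit \to A \otimes A$ across $A \otimes_A A \cong A$. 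The invariance property of $p$ follows from $A$-bimodule linearity, and the snake equations for $(p,q)$ reduce to the non-degeneracy of the Frobenius pairing on $A$.

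The main obstacle is the first step: executing the identification $E(A) \cong \Phi(\unit)$ rigorously in the non-strict monoidal setting requires careful bookkeeping with associators and with the universal property of $A \otimes_A X$. A secondary consistency point to address is that the Frobenius structure on $E(A)$ produced by Proposition~\ref{prp:monfunctor}(c) (in the special case) coincides with the one obtained from the pairing construction, so the two parts of the proposition are compatible.
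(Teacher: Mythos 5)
Your proposal is correct and its backbone coincides with the paper's: identify $E(A)$ with $\Phi(\unit)$ and pull the Frobenius structure through the Frobenius monoidal functor of Theorem~\ref{thm:Phi} via Proposition~\ref{prp:monfunctor}(c), then check specialness by the same telescoping computation (the paper's computation actually yields $m_{\Phi(\unit)}\Delta_{\Phi(\unit)}=\varphi\,\id$ and $\varepsilon_{\Phi(\unit)}u_{\Phi(\unit)}=\id$, so a rescaling is needed, which your ``admits the structure of'' phrasing accommodates). The genuine differences lie in how the identification is made and in how the non-special case is handled. The paper starts from the known formula $E(A)={}^*\hspace{-.03in}A\otimes A$ from \cite[Example~7.12.8]{EGNO}, invokes Remark~\ref{rem:Frob-rigid} to get an isomorphism $\xi\colon {}^*\hspace{-.03in}A\overset{\sim}{\to}A$ of left $A$-modules, and writes down the explicit bimodule isomorphism $\chi=\xi^{-1}r_A\otimes\id_A\colon\Phi(\unit)\to E$; you instead verify the universal property of the internal End directly for $A\otimes A$ via the Frobenius-adjunction bijection $f\mapsto\tilde f$. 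That bijection is sound --- the checks use exactly the Frobenius compatibility and counitality, as you say --- and it buys a more self-contained argument at the cost of the associator bookkeeping you flag, whereas the paper's route is shorter but leans on the cited formula and on rigidity. Your separate pairing-based treatment of the case where $A$ is Frobenius but not special is a real divergence: the paper applies Theorem~\ref{thm:Phi} uniformly to both cases, even though that theorem (per the remark following it) uses $m_A\Delta_A=\id_A$ in its proof, so your caution is warranted and your sketch via $p=m_A(\id_A\otimes\varepsilon_A\otimes\id_A)$ together with the inflated coform is a reasonable way to cover that case.
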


\begin{proof}
The object structure of $E$ follows from \cite[Example~7.12.8]{EGNO} and the references within. In particular, $E$ $= {}^*\hspace{-.03in}A \otimes A$ with $A$-bimodule structure
\[
\begin{array}{rl}
\lambda_E^A =&(r_{{}^*\hspace{-.03in}A} \otimes \id_A)(\id_{{}^*\hspace{-.03in}A} \otimes {\sf ev}'_A \otimes \id_A)(\alpha_{{}^*\hspace{-.03in}A,A,A} \otimes \id_A)(\id_{{}^*\hspace{-.03in}A} \otimes m_A \otimes \id_{{}^*\hspace{-.03in}A A})\\
& \circ (\alpha_{{}^*\hspace{-.03in}A,A,A} \otimes \id_{{}^*\hspace{-.03in}A A})({\sf coev}'_A \otimes \id_{A {}^*\hspace{-.03in}A A})(l_A^{-1} \otimes \id_{{}^*\hspace{-.03in}A A})\alpha^{-1}_{A, {}^*\hspace{-.03in}A, A}
\end{array}
\] 
and $\rho_E^A = (\id_{{}^*\hspace{-.03in}A} \otimes m_A)\alpha_{ {}^*\hspace{-.03in}A, A,A}$.

On the other hand, consider the Frobenius algebra $\unit$ in $\C$. By Theorem~\ref{thm:Phi}, we then get that $\Phi(\unit) \in {\sf FrobAlg}({}_A \C_A)$. Now by Remark~\ref{rem:Frobpairing}(b), we have an isomorphism $\xi: {}^* \hspace{-.03in} A \overset{\sim}{\to} A$ in ${}_A \C$. So, we define a map $$\chi:=\xi^{-1} r_A \otimes \id_A: \Phi(\unit) = (A \otimes \unit) \otimes A \longrightarrow {}^* \hspace{-.03in} A \otimes A = E.$$
It is straight-forward to check that $\chi$ is an isomorphism of objects in ${}_A \C_A$. Since $\Phi(\unit)$ is Frobenius,  $E$ also admits the structure of a Frobenius algebra in ${}_A \C_A$.

Now suppose that $A$ is special. Then $\varepsilon_A u_A = \varphi \; \id_{\unit}$ for some nonzero $\varphi \in \kk$. So we get that $m_{\Phi(\unit)}\; \Delta_{\Phi(\unit)} = \varphi \; \id_{\Phi(\unit)}$ in this case: indeed, by Proposition~\ref{prp:monfunctor}(a,b), we have
$m_{\Phi(\unit)}\; \Delta_{\Phi(\unit)} =
\Phi(m_\unit) \; \Phi_{\unit,\unit} \; 
\Phi^{\unit,\unit} \; \Phi(\Delta_\unit)
= \varphi \; \id_{\Phi(\unit)},$ and 
$\varepsilon_{\Phi(\unit)}\; u_{\Phi(\unit)} =
\Phi^0 \; \Phi(\varepsilon_\unit)\; \Phi(u_\unit) \; \Phi_0 
=  \id_A.$
By the isomorphism $\chi$ above, and one can then rescale the multiplication of $E$ to yield that $E$ is special.
 Since $\Phi$ is a left adjoint functor of $U$, $\Hom_{_A\C_A}( \Phi(\unit), A)\cong \Hom_\C(\unit, U(A))$. Since $U(A) = A$ and $A$ is connected, we get that $\dim_\kk \Hom_{_A\C_A}(A, \Phi(\unit) )  =  \dim_\kk \Hom_{_A\C_A}( \Phi(\unit), A)  =  \dim_\kk  \Hom_\C(\unit, A)  = 1$. Therefore $E \cong \Phi(\unit)$ is connected. From Remark \ref{rmk:properties}(b), $E$ is indecomposable.
\end{proof}

By the proposition above, $E$ is an indecomposable, special Frobenius algebra in ${}_A \C_A$, when $A$ is  connected and special Frobenius.  Now recall that ${}_A \C_A$ is fusion in this case [Proposition~\ref{prop:Yam}, Remark \ref{rmk:properties}(b)]. Moreover, recall the functor $$\Phi = \Phi_A^{\C}: \C \to {}_A \C_A$$ from Theorem~\ref{thm:Phi}, and consider the following functors:
\[
\begin{array}{rll}
\medskip
\widehat{\Phi} := \Phi_E^{{}_A \C_A}: &{}_A \C_A \to {}_E({}_A \C_A)_E, &\\
\medskip
\widehat{U}: &{}_E({}_A \C_A)_E \to {}_A \C_A  &\text{(forget)},\\
U: &{}_A \C_A \to \C &\text{(forget)}.\\
\end{array}
\]

\begin{corollary} \label{cor:Moritapres}
When $A$ is a connected, special Frobenius algebra in $\C$, the functors $\Phi$, $\widehat{\Phi}$, $U$, $\widehat{U}$ are each monoidal  and  Morita preserving.
\end{corollary}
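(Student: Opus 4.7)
The plan is to verify, for each of the four functors, the two hypotheses of Theorem~\ref{thm:Moritapres}: monoidality with the structure natural transformation $\Gamma_{*,*}$ componentwise epic, and preservation of epimorphisms. For the monoidal structure I would argue as follows. The functor $\Phi$ is already Frobenius monoidal by Theorem~\ref{thm:Phi}, and since $E = E(A)$ is special Frobenius in the monoidal category ${}_A\mathcal{C}_A$ (Propositions~\ref{prop:ACAmonoidal} and~\ref{prop:EndFrob}), applying Theorem~\ref{thm:Phi} in this setting produces a Frobenius monoidal structure on $\widehat{\Phi} = \Phi_E^{{}_A\mathcal{C}_A}$. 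For the forgetful functor $U\colon {}_A\mathcal{C}_A \to \mathcal{C}$, I would use the canonical lax monoidal structure $U_{M,N} := \pi^A_{M,N}$ and $U_0 := u_A$; the associativity and unitality axioms of Definition~\ref{def:monfunc}(a) then reduce directly to the defining equations of $\alpha^A, l^A, r^A$ in Proposition~\ref{prop:ACAmonoidal}. The same recipe equips $\widehat{U}$ with a monoidal structure, using $E$ in place of $A$.

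That the structure maps are epi is immediate for $U$ and $\widehat{U}$, since $\pi^A_{M,N}$ is a cokernel projection by Definition~\ref{def:pi}. For $\Phi$, the explicit formula in Theorem~\ref{thm:Phi} exhibits $\widetilde{\Phi}_{X,X'}$ as a composition of isomorphisms around the factor $\id_{AX} \otimes \varepsilon_A m_A \otimes \id_{X'A}$. Since $A$ is special Frobenius, both $m_A$ (split by $\Delta_A$) and $\varepsilon_A$ (split by $\varphi^{-1} u_A$, where $\varphi := \varepsilon_A u_A \ne 0$) are split epimorphisms in $\mathcal{C}$, so $\widetilde{\Phi}_{X,X'}$ is epi. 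Because $\widetilde{\Phi}_{X,X'} = \Phi_{X,X'}\,\pi^A_{\Phi(X),\Phi(X')}$, right-cancellation forces $\Phi_{X,X'}$ itself to be epi in ${}_A\mathcal{C}_A$, and the same argument handles $\widehat{\Phi}$ using that $E$ is special Frobenius in ${}_A\mathcal{C}_A$.

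Finally, each functor preserves epimorphisms. The functor $\Phi$ is left adjoint to $U$ via the standard free--forgetful bimodule adjunction, so it preserves all colimits, and in particular epimorphisms; likewise $\widehat{\Phi}$ is left adjoint to $\widehat{U}$. For $U$, I would exploit that the Frobenius structure yields ${}^*\hspace{-.03in}A \cong A$ in ${}_A\mathcal{C}$ (Remark~\ref{rem:Frob-rigid}), whence the right adjoint of $U$ coincides up to isomorphism with its left adjoint $\Phi$; so $U$ is ambidextrous, preserves both limits and colimits, and therefore preserves epimorphisms. The analogous argument handles $\widehat{U}$ via Proposition~\ref{prop:EndFrob}. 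An appeal to Theorem~\ref{thm:Moritapres} then delivers the corollary. The main obstacle I foresee is the bookkeeping through the three associators in $\widetilde{\Phi}$ when confirming that its middle factor makes it epi; a more delicate point is the ambidextrous adjunction for $U$ in this level of generality, which if troublesome can be replaced by a direct appeal to exactness of $U$ via separability of $A$ (Remark~\ref{rmk:properties}(d)).
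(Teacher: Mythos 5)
Your proposal is correct and follows the paper's strategy almost step for step: monoidality of $\Phi$ and $\widehat{\Phi}$ via Theorem~\ref{thm:Phi} together with Proposition~\ref{prop:EndFrob}, the lax monoidal structure $(\pi^A_{*,*},u_A)$ on the forgetful functors, epimorphy of the projections for $U_{*,*}$ and $\widehat{U}_{*,*}$, and preservation of epimorphisms by $\Phi$, $\widehat{\Phi}$ as left adjoints, all feeding into Theorem~\ref{thm:Moritapres}. You diverge in two places, one of which is substantive. First, where the paper simply asserts that $\Phi_{*,*}$ and $\widehat{\Phi}_{*,*}$ are ``clearly'' epic, you supply the actual reason: the lift $\widetilde{\Phi}_{X,X'}$ is a composite of isomorphisms with $\id\otimes\varepsilon_A m_A\otimes\id$, which is a split epimorphism because $A$ is special Frobenius ($m_A$ split by $\Delta_A$, $\varepsilon_A$ split by $\varphi^{-1}u_A$), and $\Phi_{X,X'}$ is then epic by cancelling $\pi$; this is exactly the detail the paper omits. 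Second, for the claim that $U$ and $\widehat{U}$ preserve epimorphisms, the paper appeals to faithfulness, which strictly speaking does not suffice: faithful functors only \emph{reflect} epimorphisms (the forgetful functor from rings to sets is the standard counterexample). Your route --- that the Frobenius property identifies coinduction with induction, making $\Phi$ a two-sided adjoint of $U$ so that $U$ preserves colimits and hence epimorphisms --- is sound and repairs this, at the cost of verifying the ambidexterity; your fallback via exactness of the forgetful functor (which holds simply because cokernels of bimodule maps are computed in $\C$, with no need to invoke separability) is the most economical fix. So the two proofs agree in architecture, but your justification of the $U$-step is actually the more defensible one.
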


\begin{proof}
We have that $\Phi$ is monoidal  by Theorem~\ref{thm:Phi}, and $\widehat{\Phi}$ is also  monoidal  by  applying Theorem~\ref{thm:Phi} with Proposition~\ref{prop:EndFrob}. Moreover, it is straight-forward to check that $U$ is  monoidal  with the following structure: for $Y, Y' \in {}_A \C_A$, take
\begin{equation}\label{eq:Umonoidal}
\begin{array}{rl}
\medskip
U_{Y,Y'} = \pi_{Y,Y'}^A: &U(Y) \otimes U(Y') = Y \otimes Y' \to Y \otimes_A Y' = U(Y \otimes_A Y'),\\
U_0 = u_A: &\unit \to A = U(\unit_{ {}_A \C_A}).
 \end{array}
\end{equation}
For instance, the following diagram commutes due to the unit constraint on $Y$ (as a left $A$-module in $\C$) and by definition of $l_Y^A$:
\[
{\footnotesize
\xymatrix@C+6pc{
\unit \otimes U(Y) = \unit \otimes Y 
\ar[d]_{l_Y}
\ar[r]^(.45){U_0 \otimes \id_Y = u_A \otimes \id_Y} 
& A \otimes Y = U(\unit_{ {}_A \C_A}) \otimes U(Y)
\ar[d]^{U_{A,Y} = \pi_{A,Y}^A}
\ar[ld]_{\lambda_Y}\\
U(Y) = Y & A \otimes_A Y = U(A \otimes_A Y)
\ar[l]_{l_Y^A}
}
}
\]
In a similar manner, the functor $\widehat{U}$ has a  monoidal  structure.

Next, we apply Theorem~\ref{thm:Moritapres} to get that each of $\Phi$, $\widehat{\Phi}$,  $U$, $\widehat{U}$ are Morita preserving. Indeed, it is clear from Theorem~\ref{thm:Phi} that the natural transformations $\Phi_{*,*}$ and  $\widehat{\Phi}_{*,*}$ are epimorphisms. Moreover,  $\Phi$ and $\widehat{\Phi}$ are left adjoints (to $U$ and $\widehat{U}$, respectively), so they preserve epimorphisms. On the other hand, we see that the natural transformations $U_{*,*}$ and  $\widehat{U}_{*,*}$ are epimorphisms from \eqref{eq:Umonoidal}. Lastly, $U$ and $\widehat{U}$ preserve epimorphisms as they are also left adjoints (see Remark~\ref{rem:Phi}(c)). 
\end{proof}

Now we establish the main result of the section. But first we need a preliminary result on the algebra $U\;\widehat{U}\;\widehat{\Phi}\;\Phi(B)$ in $\C$ (resulting from the corollary above). 

\begin{lemma} \label{lem:Morita-updown}
For $A$  a special Frobenius algebra in $\C$ and $B \in {\sf Alg}(\C)$, we get the following statements.
\begin{enumerate}
    \item $D:=U\;\widehat{U}\;\widehat{\Phi}\;\Phi(B) \in {\sf Alg}(\C).$ Here, $D = \left(E \otimes_A ((A \otimes B) \otimes A)\right) \otimes_A E$ as an object in $\C$.
    \smallskip
    \item $D$ is isomorphic to 
    $$\hspace{.5in} T:=(U(E) \otimes B) \otimes U(E) = (E \otimes B) \otimes E$$ as objects in $\C$ via 
    $$\hspace{.5in} \theta := (\nu_E^A \otimes \id_B \otimes \mu_E^A)(\overline{\alpha}_{E,A,B}^{-1} \otimes \id_{AE})\; \overline{\alpha}_{EAB, A, E} \;(\overline{\alpha}_{E,AB,A}^{-1} \otimes_A \id_E): D \overset{\sim}{\to} T,$$
    for natural isomorphisms $\mu_E^A: A \otimes_A E \overset{\sim}{\to} E$ and $\nu_E^A: E \otimes_A A \overset{\sim}{\to} E$, and associativity constraint $\overline{\alpha}$ given in Lemma~\ref{lem:alphabar}.
     \smallskip
    \item $T$ admits the structure of an algebra in $\C$, with $m_T = \theta \; m_D \; (\theta^{-1} \otimes \theta^{-1})$ and $u_T = \theta\; u_D$, and $D \cong T$ as algebras in  $\C$.
\end{enumerate}
\end{lemma}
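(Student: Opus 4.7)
The plan is to establish all three parts by carefully tracking the composition of monoidal functors and then invoking transport of algebra structure along an isomorphism in $\C$.

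For part (a), the argument is purely structural. We have $B \in {\sf Alg}(\C)$, and Theorem~\ref{thm:Phi} gives $\Phi: \C \to {}_A \C_A$ a monoidal functor, so by Proposition~\ref{prp:monfunctor}(a) we obtain $\Phi(B) \in {\sf Alg}({}_A \C_A)$. Next, Proposition~\ref{prop:EndFrob} guarantees that $E = E(A)$ is a special Frobenius algebra in ${}_A \C_A$, so Theorem~\ref{thm:Phi} applies again (with $E$ replacing $A$ and ${}_A \C_A$ replacing $\C$) to give the monoidal functor $\widehat{\Phi} = \Phi_E^{{}_A\C_A}$; hence $\widehat{\Phi}\Phi(B) \in {\sf Alg}({}_E({}_A \C_A)_E)$. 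Finally Corollary~\ref{cor:Moritapres} provides monoidal structures on the forgetful functors $\widehat{U}$ and $U$, so another two applications of Proposition~\ref{prp:monfunctor}(a) yield $D = U\,\widehat{U}\,\widehat{\Phi}\,\Phi(B) \in {\sf Alg}(\C)$. Unwinding the definitions, the underlying object of $D$ is $\left(E \otimes_A ((A \otimes B) \otimes A)\right) \otimes_A E$, since the forgetful functors do not alter the underlying object.

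For part (b), the claim is that $\theta$ is a well-defined isomorphism in $\C$, and this reduces to observing that each of its constituents is an isomorphism and that the composition is well-typed. The relative-tensor associators $\overline{\alpha}_{*,*,*}$ are isomorphisms by Lemma~\ref{lem:alphabar}, and the unit constraints $\mu_E^A: A \otimes_A E \xrightarrow{\sim} E$ and $\nu_E^A: E \otimes_A A \xrightarrow{\sim} E$ are isomorphisms by \cite[Exercise~7.8.22]{EGNO}. Chaining them moves the inner $A$ factors of $\Phi(B) = (A \otimes B) \otimes A$ outward so that they can be absorbed into the adjacent copies of $E$ via the $\otimes_A$-unit constraints, transforming $\left(E \otimes_A ((A \otimes B) \otimes A)\right) \otimes_A E$ into $(E \otimes B) \otimes E = T$. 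Thus $\theta$ is an isomorphism of objects in $\C$.

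For part (c), we simply transport the algebra structure from $D$ to $T$ along $\theta$. That is, define $m_T := \theta \, m_D\, (\theta^{-1} \otimes \theta^{-1})$ and $u_T := \theta \, u_D$; then the associativity and unit axioms for $T$ follow automatically from those of $D$, and $\theta$ is by construction an algebra isomorphism $D \xrightarrow{\sim} T$ in $\C$. The only real subtlety throughout the proof, and the main source of friction, will be the bookkeeping for part (b): verifying that the sequence of $\overline{\alpha}$-associators in the definition of $\theta$ is properly typed, since $\Phi(B) = (A \otimes B) \otimes A$ must be re-bracketed so that the outer $A$'s can interact with the relative tensor products $\otimes_A$ that flank it. This is a diagram-chase in the style of Proposition~\ref{prop:MoritaMat}, and no conceptual difficulty arises beyond careful tracking of parentheses.
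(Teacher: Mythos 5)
Your proposal is correct and follows essentially the same route as the paper: part (a) via the monoidality of $\Phi$, $\widehat{\Phi}$, $\widehat{U}$, $U$ (Corollary~\ref{cor:Moritapres}) together with Proposition~\ref{prp:monfunctor}(a); part (b) by observing that $\overline{\alpha}$ (Lemma~\ref{lem:alphabar}) and the unit constraints $\mu_E^A$, $\nu_E^A$ are isomorphisms; and part (c) by transport of structure along $\theta$. The paper's own proof is just a three-sentence version of exactly this argument.
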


\begin{proof}
Part (a) follows from Corollary~\ref{cor:Moritapres} and Proposition~\ref{prp:monfunctor}. Part (b) holds because $\mu_E^A$ and $\nu_E^A$ are isomorphisms in $\C$ (see \cite[Exercise~7.8.22]{EGNO}), and $\overline{\alpha}$ is an isomorphism in $\C$ by Lemma~\ref{lem:alphabar}. Part (c) follows from parts (a) and (b).
\end{proof}

\begin{theorem} \label{thm:Morita-updown}
Take $A$  a connected, special Frobenius algebra in $\C$, and take $B, B' \in {\sf Alg}(\mathcal{C})$.  Then, $B$ and $B'$ are Morita equivalent as algebras in~$\C$ if and only if $\Phi(B)$ and $\Phi(B')$ are Morita equivalent as algebras in ${}_A \C_A$.
\end{theorem}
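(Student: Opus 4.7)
The forward implication is immediate: by Corollary~\ref{cor:Moritapres} the functor $\Phi$ is Morita preserving, so Morita equivalence of $B$ and $B'$ in $\C$ passes to Morita equivalence of $\Phi(B)$ and $\Phi(B')$ in ${}_A \C_A$.

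For the converse, the plan is to descend the given Morita equivalence in ${}_A \C_A$ all the way back to $\C$ by successively applying the monoidal functors $\widehat{\Phi}: {}_A \C_A \to {}_E ({}_A \C_A)_E$, $\widehat{U}: {}_E ({}_A \C_A)_E \to {}_A \C_A$, and $U: {}_A \C_A \to \C$, each of which is Morita preserving by Corollary~\ref{cor:Moritapres}. This yields that $U\widehat{U}\widehat{\Phi}\Phi(B)$ and $U\widehat{U}\widehat{\Phi}\Phi(B')$ are Morita equivalent algebras in $\C$. By Lemma~\ref{lem:Morita-updown}(c), these algebras are isomorphic in ${\sf Alg}(\C)$ to $T(B) := (E \otimes B) \otimes E$ and $T(B')$ respectively, so $T(B)$ and $T(B')$ are Morita equivalent in $\C$. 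By transitivity of Morita equivalence, the converse thus reduces to the following intermediate claim: for every algebra $B$ in $\C$, the algebras $B$ and $T(B)$ are Morita equivalent in $\C$.

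To establish this intermediate claim, the plan is to invoke Proposition~\ref{prop:MoritaMat} with $V = E$ (which has a dual since $\C$ is rigid) and $S = B$, obtaining that $B$ is Morita equivalent in $\C$ to $({}^*\hspace{-.02in}E \otimes B) \otimes E$. The Frobenius structure on $A$ (see Remarks~\ref{rem:Frobpairing} and~\ref{rem:Frob-rigid}) makes $A$ self-dual, so $A \cong {}^*\hspace{-.02in}A$ and, applying the left dual functor once more, $A \cong {}^{**}A$. Consequently ${}^*\hspace{-.02in}E = {}^*\hspace{-.02in}A \otimes {}^{**}A \cong {}^*\hspace{-.02in}A \otimes A = E$, giving an object-level identification $({}^*\hspace{-.02in}E \otimes B) \otimes E \cong T(B)$ in $\C$.

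The main obstacle is upgrading this object-level identification to the algebra level, matching Proposition~\ref{prop:MoritaMat}'s structure with the one that $T(B)$ inherits via the isomorphism $\theta$ of Lemma~\ref{lem:Morita-updown}(b,c) from $D = U\widehat{U}\widehat{\Phi}\Phi(B)$. To handle this, I would either exhibit an explicit isomorphism of algebras using the special Frobenius identity $m_A \Delta_A = \id_A$ together with the self-duality data on $A$, or, more robustly, construct Morita equivalence bimodules $P \in {}_{T(B)}\C_B$ and $Q \in {}_B \C_{T(B)}$ directly in $\C$, modeled on the bimodules $P = {}^*\hspace{-.02in}V \otimes S$ and $Q = S \otimes V$ in the proof of Proposition~\ref{prop:MoritaMat}, and verify the hypotheses of Proposition~\ref{prop:Morita-bimod}(b). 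Routine but lengthy diagrammatic verification, driven by the special Frobenius axioms on $A$ and the naturality of the associativity constraint, should close the argument.
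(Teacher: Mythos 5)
Your proposal is correct and follows essentially the same route as the paper: the forward direction via Corollary~\ref{cor:Moritapres}, the converse by descending through $U\,\widehat{U}\,\widehat{\Phi}\,\Phi$ and Lemma~\ref{lem:Morita-updown}, and a final reduction to showing $T(B)=(E\otimes B)\otimes E$ is Morita equivalent to $B$ in $\C$. For that last step the paper carries out precisely your second, ``more robust'' option: it constructs the bimodules $P=E\otimes B$ and $Q=B\otimes E$ directly, using the pairing $\phi=\varepsilon_A\,\widetilde{p}_E$ coming from the self-duality of $E$ in ${}_A\C_A$, and verifies the hypotheses of Proposition~\ref{prop:Morita-bimod}(b), rather than invoking Proposition~\ref{prop:MoritaMat} verbatim and transporting the algebra structure across ${}^*E\cong E$.
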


\begin{proof}
The forward direction holds because $\Phi$ is Morita preserving by Corollary~\ref{cor:Moritapres}.

For the converse, note that the algebras $U\;\widehat{U}\;\widehat{\Phi}\;\Phi(B)$ and $U\;\widehat{U}\;\widehat{\Phi}\;\Phi(B')$ are Morita equivalent algebras in $\C$ because $U$, $\widehat{U}$, $\widehat{\Phi}$ are each Morita preserving [Corollary~\ref{cor:Moritapres}]. So it suffices to show that $U\;\widehat{U}\;\widehat{\Phi}\;\Phi(B)$ is Morita equivalent to $B$ as algebras in $\C$, which we achieve as follows.

By Lemma~\ref{lem:Morita-updown}, $D:=U\;\widehat{U}\;\widehat{\Phi}\;\Phi(B)$ is isomorphic to $T=(U(E) \otimes B) \otimes U(E) = (E \otimes B) \otimes E$ as algebras in $\C$. So it suffices to show that $T$ is Morita equivalent to $B$ in $\C$. This holds using the methods in the proof of Proposition~\ref{prop:MoritaMat}(b). We discuss this in the strict case and leave the general case to the reader.

We have by Remark~\ref{rem:Frobpairing}(a) and Proposition~\ref{prop:EndFrob} that $E$ is a self-dual object in ${}_A \C_A$ with evaluation map $p_E = \varepsilon_E m_E: E \otimes_A E \to A$. To proceed, recall Notation~\ref{not:tilde} and define the morphism
 $$\phi:= \varepsilon_A \;\widetilde{p}_E:  E \otimes E \longrightarrow \unit.$$
Now, take  $P = E \otimes B$ with morphisms $\lambda_P^T = (\id_E \otimes m_B)(\id_{EB} \otimes \phi \otimes \id_B)$ and $\rho_P^B = (\id_E \otimes m_B)$, and take  $Q = B \otimes E$ with  morphisms $\lambda_Q^B = (m_B \otimes \id_E)$ and $\rho_Q^T = (m_B \otimes \id_E)(\id_{B} \otimes \phi \otimes \id_{BE})$. We  then obtain that $P \in {}_T \C_B$ and $Q \in {}_B \C_T$. Moreover, we have epimorphisms 
\[
\begin{array}{lrll}
\medskip
\widehat{\tau} = &\id_E \otimes m_B \otimes \id_E&: P \otimes Q \to T &\in {}_T \C_T, \\ \widehat{\mu} = &m_B(\id_B \otimes \phi \otimes \id_B)&: Q \otimes P \to B &\in {}_B \C_B,
\end{array}
\]
which factor through epimorphisms $\tau: P \otimes_B Q \to T$ and $\mu: Q \otimes_T P \to B$, respectively.
Similar to the proof of Proposition~\ref{prop:MoritaMat}(b), it is also straight-forward to check that $\tau$ and $\mu$ satisfy diagrams $(\ast)$ and $(\ast \ast)$ of Proposition~\ref{prop:Morita-bimod}(b). Thus, by Proposition~\ref{prop:Morita-bimod}, $T$ and $B$ are Morita equivalent in $\C$, as desired.
\end{proof}


\section{Algebras in pointed fusion categories} \label{sec:pointed}

We recall here the definition of a twisted group algebra in the pointed fusion category $\vecgw$ [Definition~\ref{def:ALpsi}]. We show that these algebras can be given the structure of a Frobenius algebra in  $\vecgw$ [Proposition~\ref{prop:ALpsi}], and further, that they enjoy nice properties  [Proposition~\ref{prop:Alpsi props}]. We begin with discussing pointed categories.

\begin{definition} 
A fusion category  is  called {\it pointed} if all of its simple objects are invertible, in the sense that the co/evaluation maps on simple objects are isomorphisms.
\end{definition}

The following pointed fusion category will be crucial to our work.

\begin{definition}[$\vecgw$, $\delta_g$] 
\label{def:vecgw} Take $G$ a finite group with normalized 3-cocycle $\omega \in H^3(G, \kk^\times)$. The category $\vecgw$ is  the category of $G$-graded vector spaces $V = \bigoplus_{x \in G} V_x$ with associativity constraint $\omega$ given as follows. In particular, its simple objects are  $\{\delta_g\}_{g \in G}$, where the $G$-grading is $(\delta_g)_x = \delta_{g,x}\cdot \kk$, for $g,x \in G$. 
Morphisms are $\kk$-linear maps that preserve the $G$-grading.

The monoidal structure is determined by the $G$-grading of objects $$(V \otimes W)_x = \bigoplus_{yz=x} V_y \otimes W_z,$$ the associativity constraint 
$$\alpha_{\delta_g, \delta_{g'}, \delta_{g''}} = \omega^{-1}(g, g', g'')\; {\sf \id}_{\delta_{g g' g''}}: (\delta_g \otimes \delta_{g'}) \otimes \delta_{g''} \to \delta_g \otimes (\delta_{g'} \otimes \delta_{g''}),$$
the unit object $\delta_e$,  with unit constraints $l_{\delta_g} = \omega^{-1}(e,e,g) \; \id_{\delta_g}$,   $r_{\delta_g} = \omega(g,e,e) \; \id_{\delta_g}$.

The duals of simple objects are defined as $\delta_g^* = \delta_{g^{-1}} = {}^*\delta_g$, with evaluation morphisms given by ${\sf ev}_{\delta_g}(\delta_g^* \otimes \delta_g) = \omega(g, g^{-1}, g) \delta_e$ and ${\sf ev}'_{\delta_g}(\delta_g \otimes {}^*\delta_g) = \omega^{-1}(g, g^{-1}, g) \delta_e$, and coevaluation morphisms given by ${\sf coev}_{\delta_g}(\delta_e) = \delta_g \otimes \delta_g^*$ and ${\sf coev}'_{\delta_g}(\delta_e) = {}^*\delta_g \otimes \delta_g$.
\end{definition}

\begin{remark} \label{rmk:omega}
  The associativity constraint $\alpha_{\delta_g, \delta_{g'}, \delta_{g''}}$ of $\vecgw$ given in \cite{EGNO} is defined by $\omega(g, g', g'') {\id}_{\delta_{g g' g''}}$, but we need to use $\omega^{-1}(g, g', g'') \; {\id}_{\delta_{g g' g''}}$ here in order to get that the twisted group algebra $A(L, \psi)$ presented in Definition~\ref{def:ALpsi} below is an associative algebra in $\vecgw$.
\end{remark}

Not only is $\vecgw$ a pointed fusion category, we have that every pointed fusion category is equivalent to one of this type (see \cite[Section~8.8]{ENO}).

\smallskip

 For reference in computations later, the 3-cocycle condition on $\omega$ is 
\begin{equation} \label{eq:omega}
\omega(g_1g_2, g_3, g_4)\;
\omega(g_1, g_2, g_3g_4) \; = \;
\omega(g_1, g_2, g_3) \;
\omega(g_1, g_2g_3, g_4) \;
\omega(g_2, g_3, g_4)
\end{equation}

\noindent for all $g_i \in G$.

\smallskip

Next we turn our attention to algebras in, and module categories over, $\vecgw$. To continue, consider the following terminology.

\begin{definition}[$A(L,\psi)$] \label{def:ALpsi}
Take $L$ a subgroup of $G$ so that the class of $\omega|_{L^{\times 3}}$ is trivial, and take $\psi \in C^2(L, \kk^\times)$ so that $d\psi = \omega|_{L^{\times 3}}$. We  assume that $\psi$ is normalized. We define the {\it twisted group algebra} $A(L,\psi)$ in $\vecgw$ to be $\textstyle \bigoplus_{g \in L} \delta_g$ as an object in $\vecgw$, with multiplication given by $$\delta_g \otimes \delta_{g'} \mapsto \psi(g,g') \delta_{gg'}.$$
\end{definition}

It is well-known, and we will see later in Proposition~\ref{prop:ALpsi}, that $A(L,\psi)$ is indeed an associative algebra in $\vecgw$.

For reference in computations later, note that for a 2-cocycle, say $\theta$, on a subgroup $N$ of $G$ the condition that $d \theta = \omega|_{N^{\times 3}}$   is translated as follows:
\begin{equation} \label{eq:theta}
\theta(f_1, f_2f_3)\;
\theta(f_2, f_3) \; = \;
\omega(f_1, f_2, f_3) \;
\theta(f_1f_2, f_3) \;
\theta(f_1, f_2), \qquad \text{for $f_i \in N$.}
\end{equation}

\smallskip

We show now that twisted group algebras $A(L,\psi)$ are Frobenius algebras in~$\vecgw$.

\begin{proposition} \label{prop:ALpsi} The twisted group algebra $A(L,\psi)$ admits the structure of a Frobenius algebra in $\vecgw$:  for $g, g' \in L$,  it is given by
\begin{align*}
    m_{A(L,\psi)}(\delta_g \otimes \delta_{g'}) &= \psi(g,g') \; \delta_{gg'},\\
    u_{A(L,\psi)}(\delta_e) ~&=~ \delta_e,\\
    \Delta_{A(L,\psi)}(\delta_g)~ &=~ |L|^{-1} \textstyle \bigoplus\limits_{h \in L} \psi^{-1}(gh, h^{-1}) \; [\delta_{gh} \otimes \delta_{h^{-1}}],\\
    \varepsilon_{A(L,\psi)}(\delta_g)~ &=~  \delta_{g,e} \; |L| \; \delta_e.
\end{align*}
\end{proposition}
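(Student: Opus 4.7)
The plan is to verify, directly on simple objects $\delta_g$ (with $g \in L$), that the structure maps satisfy each axiom needed for $(A(L,\psi), m, u, \Delta, \varepsilon)$ to be a Frobenius algebra in $\vecgw$, by repeatedly invoking the twisted cocycle condition~\eqref{eq:theta} for $\psi$ and the normalization assumptions on $\omega$ and $\psi$. Since the structure maps are $\kk$-linear and graded, it suffices to work on basis elements.

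First, I would check associativity and unitality of $(m, u)$. Associativity reduces on simples to the identity
\[
\psi(g,g')\,\psi(gg', g'') \;=\; \omega(g,g',g'')\,\psi(g,g'g'')\,\psi(g',g''),
\]
which is precisely~\eqref{eq:theta} with $(f_1,f_2,f_3)=(g,g',g'')$ (note the factor $\omega^{-1}$ comes from the associator of $\vecgw$). Unitality follows from $\psi(e,g)=\psi(g,e)=1$, using the normalization of $\omega$ which gives $l_{\delta_g}=\id=r_{\delta_g}$. Next I would verify coassociativity and counitality of $(\Delta,\varepsilon)$. Writing both $\alpha(\Delta\otimes\id)\Delta(\delta_g)$ and $(\id\otimes\Delta)\Delta(\delta_g)$ as sums over triples $(x,y,z)\in L^3$ with $xyz = g$, each term carries coefficient $\omega^{-1}(x,y,z)\psi^{-1}(xy,z)\psi^{-1}(x,y)$ and $\psi^{-1}(x,yz)\psi^{-1}(y,z)$ respectively; equality of these is again~\eqref{eq:theta}. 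Counitality is immediate since $\varepsilon(\delta_{gh})$ kills all but the single term $h = g^{-1}$ in the sum defining $\Delta(\delta_g)$, whose prefactor $|L|^{-1}\psi^{-1}(e, g)\cdot |L| = 1$ collapses to $\delta_e \otimes \delta_g$ (matching $l_{\delta_g}^{-1}$), and symmetrically for the right counit.

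The Frobenius compatibility $(m\otimes\id)\alpha^{-1}(\id\otimes\Delta) \;=\; \Delta m \;=\; (\id\otimes m)\alpha(\Delta\otimes\id)$ is the substantive step. For the left-hand equality, expanding on $\delta_g\otimes\delta_{g'}$ reduces the claim to
\[
\omega(g, g'h, h^{-1})\,\psi(g, g'h)\,\psi^{-1}(g'h, h^{-1}) \;=\; \psi(g,g')\,\psi^{-1}(gg'h, h^{-1})
\]
for every $h\in L$; this is exactly~\eqref{eq:theta} applied with $(f_1,f_2,f_3)=(g,\,g'h,\,h^{-1})$, noting that $f_2 f_3 = g'$ and $f_1 f_2 = gg'h$. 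For the right-hand equality, one first reindexes the sum appearing in $(\id\otimes m)\alpha(\Delta\otimes\id)(\delta_g\otimes\delta_{g'})$ by setting $h = g'k$, so that $gh = gg'k$ and $h^{-1}g' = k^{-1}$; after this substitution the required identity becomes~\eqref{eq:theta} applied with $(f_1,f_2,f_3) = (gg'k,\, k^{-1}g'^{-1},\, g')$, whose left factor $f_1 f_2 f_3 = gg'$ and middle factor $f_2 f_3 = k^{-1}$ produce precisely the coefficient $\psi(g,g')\,\psi^{-1}(gg'k, k^{-1})$ appearing on the $\Delta m$ side.

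The main obstacle is not conceptual but bookkeeping: each of the three compatibility identities (associativity of $m$, coassociativity of $\Delta$, Frobenius compatibility) arises from~\eqref{eq:theta} after a suitably clever choice of the triple $(f_1,f_2,f_3)$ and, for the Frobenius equation, a change of summation variable that turns a sum indexed by $h$ into one indexed by $k$ with $h=g'k$. Once this reindexing is performed, the twisted cocycle identity furnishes the required equality on the nose, and all factors of $|L|$ cancel cleanly because exactly one factor appears on each side. I would therefore present the associativity and coassociativity verifications briefly and devote the bulk of the proof to displaying the Frobenius computation with the reindexing and the explicit invocation of~\eqref{eq:theta}.
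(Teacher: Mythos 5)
Your proposal is correct in substance but takes a genuinely different route from the paper. The paper verifies the Frobenius structure via the pairing/copairing characterization of Remark~\ref{rem:Frobpairing}: it defines $p(\delta_g\otimes\delta_{g'}) = |L|\,\psi(g,g')\,\delta_{gg',e}\,\delta_e$ and the copairing $q$, checks the invariance condition and the Snake Equation (each of which collapses to a single application of \eqref{eq:theta} at $(g,g^{-1},g)$), and then \emph{derives} the formulas for $\Delta$ and $\varepsilon$ from the conversion formulas $\Delta = (m\otimes\id)\alpha^{-1}(\id\otimes q)r^{-1}$ and $\varepsilon = p(u\otimes\id)r^{-1}$ — so coassociativity and the two-sided Frobenius compatibility never have to be checked by hand. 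You instead verify Definition~\ref{def:algstr}(c) directly for the stated $\Delta$ and $\varepsilon$: coassociativity and counitality of the comonoid, and then both Frobenius compatibilities via the reindexings $h\mapsto g'k$ etc. Your identifications of the relevant instances of \eqref{eq:theta} — $(g,g'h,h^{-1})$ for the left Frobenius identity and $(gg'k,k^{-1}g'^{-1},g')$ after reindexing for the right one — are correct, as is the triple-sum bookkeeping for coassociativity. The trade-off is that your route costs three double-sum manipulations where the paper's costs two one-term collapses, but yours is self-contained and does not rely on the equivalence of the two definitions of a Frobenius algebra. One small error to fix: your displayed associativity identity has $\omega(g,g',g'')$ where it should have $\omega^{-1}(g,g',g'')$; the correct form, forced by the associator $\alpha = \omega^{-1}\,\id$ that you yourself flag, is $\psi(g,g')\,\psi(gg',g'') = \omega^{-1}(g,g',g'')\,\psi(g,g'g'')\,\psi(g',g'')$, which is indeed \eqref{eq:theta} at $(g,g',g'')$ after rearrangement.
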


\begin{proof}
We  have that $A(L,\psi)$ is  an algebra in $\vecgw$. Indeed, recall Remark~\ref{rmk:omega}, and for the associativity of multiplication, consider the following computation:
\begin{align*}
&m_{A(L,\psi)} ( \id \otimes m_{A(L,\psi)}  ) \; \alpha [(\delta_{g} \otimes \delta_{g'}) \otimes \delta_{g''}] \\
&= \omega^{-1}(g,g',g'') \; \psi(g',g'') \; \psi(g,g'g'') \; \delta_{gg'g''}\\
&= \psi(g,g') \; \psi(gg', g'')\; \delta_{gg'g''}\\
&= m_{A(L,\psi)} ( m_{A(L,\psi)} \otimes \id ) [(\delta_{g} \otimes \delta_{g'}) \otimes \delta_{g''}].
\end{align*} 
For the  second equation, we used \eqref{eq:theta} with $\theta = \psi$ and $(f_1, f_2, f_3) = (g, g', g'')$.
Next, it is straight-forward to check that  $u_{A(L,\psi)}$ satisfies the unit axiom.
Thus, $A(L,\psi)$ is an algebra in $\vecgw$.

Moreover, by Remark~\ref{rem:Frobpairing}(a), $A(L,\psi)$ is a Frobenius algebra in $\vecgw$ via the following formulas: 
Take
\begin{align*}
p \left( \delta_{g} \otimes \delta_{g'} \right) :=
\begin{cases}
  |L|\; \psi(g, g') \; \delta_{e}, & g g' = e    \\
  0,  & g g' \neq e, 
\end{cases}
\end{align*}
\begin{align}\label{eqn:copairing}    
q (\delta_{e}):= |L|^{-1} \textstyle \bigoplus\limits_{h \in L} \; \psi^{-1}(h, h^{-1}) \; [\delta_{h} \otimes \delta_{h^{-1}}].
\end{align}
For instance, the invariance conditions for $p$ holds as:
\begin{align*}
p \; (\id \otimes m) \; \alpha \;[(\delta_g \otimes \delta_{g'})\otimes \delta_{g''}]
&= \omega^{-1}(g, g', g'') \; p \; (\id \otimes m)[\delta_g \otimes (\delta_{g'}\otimes \delta_{g''})]\\
&= \omega^{-1}(g, g', g'')\; \psi(g', g'') \; p\; [\delta_g \otimes \delta_{g'g''}]\\
&= |L|\;\omega^{-1}(g, g', g'')\; \psi(g', g'') \; \psi(g, g'g'')\delta_{gg^\prime g^{\prime\prime},e} \; \delta_{e}\\
&= |L|\; \psi(g, g')\; \psi(gg',g'') \delta_{gg^\prime g^{\prime\prime},e} \; \delta_{e}\\
&=  \psi(g, g')\; p\;  [\delta_{gg'} \otimes \delta_{g''}]\\
&= p \; (m \otimes \id)\; [(\delta_g \otimes \delta_{g'})\otimes \delta_{g''}];
\end{align*}
here the fourth equality  holds by \eqref{eq:theta} with $\theta = \psi$ and $(f_1, f_2, f_3) = (g, g', g'')$.
We leave it to the reader to check the snake relations for $p$ and $q$.

Now the comultiplication map $\Delta$ and counit map $\varepsilon$ are given as follows (again due to Remark~\ref{rem:Frobpairing}(a)):
\begin{align*}
\Delta(\delta_g) &= (m \otimes \id_A) \; \alpha^{-1}_{A,A,A}\; (\id_A \otimes q) \; r_{A}^{-1} (\delta_g)\\
&= |L|^{-1} \textstyle \bigoplus_{h \in L} \psi^{-1}(h,h^{-1}) \; (m \otimes \id_A ) \; \alpha^{-1} [\delta_g \otimes (\delta_{h} \otimes \delta_{h^{-1}})]\\
&= |L|^{-1} \textstyle \bigoplus_{h \in L} \psi^{-1}(h,h^{-1}) \;  \omega(g,h,h^{-1})\; (m \otimes \id ) [(\delta_g \otimes \delta_{h}) \otimes \delta_{h^{-1}}]\\
&= |L|^{-1} \textstyle \bigoplus_{h \in L} \psi^{-1}(h,h^{-1}) \;  \omega(g,h,h^{-1})\; \psi(g,h)\; [\delta_{gh} \otimes \delta_{h^{-1}}]\\
&= |L|^{-1}  \textstyle \bigoplus_{h \in L} \psi^{-1}(gh,h^{-1}) \;[\delta_{gh} \otimes \delta_{h^{-1}}].
\end{align*} 
Here, the ultimate equality holds by applying \eqref{eq:theta} to $(f_1, f_2, f_3) = (g, h, h^{-1})$. Moreover,
$$ \varepsilon(\delta_g)
~=~ p \; (u \otimes \id_A)\; r_{A}^{-1} (\delta_g)
~=~ p (\delta_g \otimes \delta_e)\\
~=~ \delta_{g,e}\; |L| \; \delta_e.$$ 
Therefore, $A(L,\psi) \in {\sf FrobAlg}(\vecgw)$. 
\end{proof}

Now we discuss algebraic properties of twisted group algebras;  see Section~\ref{sec:alg-mon}.

\begin{proposition} \label{prop:Alpsi props} 
The twisted group algebra $A:=A(L, \psi)$, with structural morphisms $m, u, \Delta, \varepsilon$ given in Proposition \ref{prop:ALpsi}, possesses the following properties: 
\begin{enumerate}
    \item connected;
    \item indecomposable;
    \item special;
    \item separable. 
 \end{enumerate}
\end{proposition}

\begin{proof} 
(a) We have that $\Hom_{\vecgw}(\unit_{\vecgw},A )=\Hom_{\vecgw}(\delta_e, \oplus_{g\in L}\delta_g)=\{\delta_e\mapsto \delta_e\}$, because morphisms preserve $G$-grading. Then $\dim \Hom_{\vecgw}(\unit_{\vecgw}, A)=1$ and  $A$ is connected.

\smallskip

(b) See Remark \ref{rmk:properties}(b).

\smallskip

(c) The algebra $A$ is special because
\begin{align*}
m\Delta (\delta_{g})
&= |L|^{-1} \textstyle \bigoplus_{h \in L} \psi^{-1}(gh,h^{-1}) \; m[(\delta_{gh} \otimes \delta_{h^{-1}})]\\
 &= |L|^{-1} \textstyle \bigoplus_{h \in L} \psi^{-1}(gh,h^{-1}) \; \psi(gh,h^{-1}) \; \delta_{g} \quad = \delta_g,
\end{align*}
and $\varepsilon_{A(L,\psi)}\; u_{A(L,\psi)}(\delta_e) = |L| \delta_e =  |L|\;\id_{\unit} (\delta_e)$.

\smallskip

(d) This follows from Remark~\ref{rmk:properties}(b) and part (c) above.
\end{proof}


\section{Algebras  in group-theoretical fusion categories}\label{sc: algebras in GT}


 We define in this section the main structures of interest in this work: twisted Hecke algebras [Definition~\ref{def:tha}]. These are algebras in group-theoretical fusion categories $\mathcal{C}$ [Definition~\ref{def:gt}] that are analogous to the twisted group algebras in $\vecgw$ discussed in Section~\ref{sec:pointed}.  We establish that the twisted Hecke algebras admit the structure of a Frobenius algebra in $\mathcal{C}$ [Theorem~\ref{thm:AKL}], and further, as algebras in $\C$ we show that they are indecomposable, separable, and special [Proposition~\ref{prop: AKL props}]. We also discuss when these (Frobenius) algebras are  connected in $\C$ [Proposition~\ref{prop:non-connected}]. 
We  proceed by introducing the terminology mentioned above.

\begin{definition}[$\mathcal{C}(G,\omega, K, \beta)$]\cite[Section 8.8; Definition 8.40]{ENO} \label{def:gt}
A {\it group-theoretical fusion category} is a category of bimodules of the form 
$$\mathcal{C}(G,\omega, K, \beta):= {}_{A(K, \beta)} (\vecgw)_{A(K, \beta)}$$
for a twisted group algebra $A(K, \beta)$ in $\vecgw$.
\end{definition}

This is equivalent to the functor category ${\sf Fun}_{\vecgw}( \M(K, \beta), \M(K, \beta) )^{\text{op}}$; see \cite[Proposition 7.11.1, Definition~7.12.2, and Remark 7.12.5]{EGNO}. 
Next,  we recall a description of simple objects of  group-theoretical fusion categories.

\begin{lemma} \cite[Example~9.7.4]{EGNO} \cite[Section~5]{GelakiNaidu} \label{lem:simple-gt}
Any simple object of $\C(G,\omega,K,\beta)$ is of the form $$V_{g,\rho} = \left(\bigoplus_{f \in K,k \in T} (\delta_f \otimes \delta_g) \otimes \delta_k\right)^{\oplus n_g},$$ 
where $g\in G$ is a representative of a double coset in $K\backslash G/K$, $T$ is a set of representatives of the classes in $K/K^{g^{-1}}$ for $K^{g^{-1}}:=(K\cap g^{-1}Kg)$,  $\rho: K^{g^{-1}}\to \mathrm{GL}(V)$ is a certain irreducible projective representation, 
and $n_g=\dim V$. The $A(K,\beta)$-bimodule structure on $V_{g,\rho}$ is given by the left $A(K,\beta)$-action $m_{A(K,\beta)} \otimes \id \otimes \id$ and the compatible right $A(K,\beta)$-action is determined by the left $A(K,\beta)$-action and~$\rho$.
\qed
\end{lemma}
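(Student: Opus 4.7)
The plan is to classify simple objects of ${}_A (\vecgw) {}_A$ with $A = A(K,\beta)$ by exploiting the interaction between the $G$-grading of $\vecgw$ and the $A$-bimodule action.

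First, I would observe that for any $A$-bimodule $V$ in $\vecgw$, left multiplication by $\delta_k$ (for $k \in K$) carries $V_x$ into $V_{kx}$, while right multiplication by $\delta_{k'}$ carries $V_x$ into $V_{xk'}$. It follows that the $G$-support of $V$ is a union of double cosets in $K \backslash G / K$, and any indecomposable (in particular, simple) $V$ is supported on a single double coset $KgK$.

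Next, I would fix a representative $g \in G$ of this double coset and a transversal $T \subseteq K$ for $K/K^{g^{-1}}$, where $K^{g^{-1}} = K \cap g^{-1}Kg$. Since every element of $KgK$ admits a unique factorization $fgk$ with $f \in K$ and $k \in T$, the graded object underlying $V$ decomposes as $\bigoplus_{f \in K,\, k \in T} V_{fgk}$. The actions of the invertible elements $\delta_f$ and $\delta_k$ identify all homogeneous components as vector spaces of a common dimension $n_g$, which accounts for the shape of $V_{g,\rho}$ in the statement.

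The core step is to reduce the entire bimodule structure to a single datum on one homogeneous component. For $h \in K^{g^{-1}}$ we have $gh = \tilde h\, g$ with $\tilde h := ghg^{-1} \in K$, so one defines an action of $K^{g^{-1}}$ on the chosen component $V_g$ by the combined formula $h \cdot v := \delta_{\tilde h} \cdot v \cdot \delta_{h^{-1}}$. The $\omega$-twist of the associator of $\vecgw$ and the $\beta$-twist of the multiplication of $A(K,\beta)$ conspire to make this action projective: one obtains a representation $\rho : K^{g^{-1}} \to \mathrm{GL}(V_g)$ with respect to an explicit 2-cocycle $c_g \in Z^2(K^{g^{-1}}, \kk^\times)$ built from $\omega$ and $\beta$. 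Simplicity of $V$ as an $A$-bimodule then translates precisely into irreducibility of $\rho$.

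Conversely, I would induce an arbitrary irreducible $c_g$-projective representation $\rho$ of $K^{g^{-1}}$ up to a full $A$-bimodule: the underlying graded space is forced to be $\bigoplus_{f,k}(\delta_f \otimes \delta_g)\otimes \delta_k$ with multiplicity $n_g = \dim V$, and the bimodule structure on each homogeneous component is uniquely determined by $\rho$ on $V_g$ together with the coset data $(f,k)$. The main obstacle is the explicit determination of $c_g$ in terms of $\omega$ and $\beta$, and verifying that equivalence classes of pairs $(g,\rho)$—modulo change of coset representative and isomorphism of projective representations—biject with isomorphism classes of simple bimodules. This bookkeeping is nontrivial but standard, and is carried out in detail in \cite[Section~5]{GelakiNaidu} and \cite[Example~9.7.4]{EGNO}, to which I would appeal for the cocycle computation.
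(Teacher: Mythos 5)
Your proposal is correct and follows the standard double-coset/induced-projective-representation argument; the paper itself offers no proof of this lemma, citing it directly from \cite[Example~9.7.4]{EGNO} and \cite[Section~5]{GelakiNaidu}, and your sketch is essentially a faithful reconstruction of the argument in those sources. Deferring the explicit $2$-cocycle $c_g$ and the bijection bookkeeping to the same references is consistent with how the paper treats the result.
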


Now we turn our attention to algebraic structures in group-theoretical fusion categories.
 
\begin{definition}[$A^{K, \beta}(L,\psi)$] \label{def:tha} Consider the functor $\Phi: \vecgw \to \mathcal{C}(G,\omega, K, \beta)$ from Theorem~\ref{thm:Phi} in the case when $\C = \vecgw$ and $A = A(K, \beta)$. We refer to 
$$\Phi(A(L,\psi))=: A^{K, \beta}(L,\psi)$$ 
as a {\it twisted Hecke algebra} in $\mathcal{C}(G,\omega, K, \beta)$.
\end{definition}

We use this terminology because the simple objects of the group-theoretical fusion category $\mathcal{C}(G,\omega, K, \beta)$ are, in part, parameterized by $K$-double cosets in $G$ [Lemma~\ref{lem:simple-gt}], and as we  see below, the multiplication  is twisted by cocycles.

\begin{theorem} \label{thm:AKL} The twisted Hecke algebra $A^{K, \beta}(L,\psi)$ equals $$\bigoplus_{g \in L; \; f, k \in K}\; (\delta_f \otimes \delta_g) \otimes \delta_k$$  as an object in $\mathcal{C}(G,\omega, K, \beta)$.  Furthermore, for $f,f',k,k',d,d' \in K$ and $g,g' \in L$, we have the following statements.

\smallskip

\begin{enumerate}
    \item[(a)] $A^{K,\beta}(L,\psi)$ has the structure of an algebra in $\mathcal{C}(G,\omega, K, \beta)$, where
    \vspace{-.1in}
    
    \begin{align*}
    &\hspace{.25in} m_{A^{K,\beta}(L,\psi)}
    [((\delta_f \otimes \delta_g) \otimes \delta_k) 
    \; \otimes_{A(K,\beta)} \;((\delta_{f'} \otimes \delta_{g'}) \otimes \delta_{k'})]\\
    & \hspace{.5in} = \; \; \delta_{kf',e} \; \omega(fgk, f'g', k') \; \omega^{-1}(fg, k, f'g') \; \omega(k, f', g') \; \omega^{-1}(f, g, g')\\
    &\hspace{.75in} \cdot \; \beta(k, f') \; \psi(g, g')\; [(\delta_f \otimes \delta_{gg'}) \otimes \delta_{k'}],
    \end{align*}
    \begin{align*} 
    \hspace{-.45in}u_{A^{K,\beta}(L,\psi)}
    (\delta_d) ~=~ \textstyle \bigoplus_{s \in K} \beta^{-1}(ds^{-1}, s) \; [(\delta_{ds^{-1}} \otimes \delta_e) \otimes \delta_s].
    \end{align*}

\smallskip

 \item [(b)] With the above, $A^{K,\beta}(L,\psi)$ is a Frobenius algebra in $\mathcal{C}(G,\omega, K, \beta)$, where
\begin{align*}
   &\hspace{.1in} \Delta_{A^{K,\beta}(L,\psi)}
    [(\delta_f \otimes \delta_g) \otimes \delta_k] 
    \\
    & \hspace{.1in} = |K|^{-1}|L|^{-1} \textstyle \bigoplus_{h \in L; \; s \in K} \; 
     \omega(f, gh, h^{-1})\;  \omega(fgh, s, s^{-1}h^{-1}) \; \omega^{-1}(fghs, s^{-1}h^{-1}, k) \\
    & \hspace{1.1in} \cdot \; \omega^{-1}(s,s^{-1},h^{-1}) \; \psi^{-1}(gh, h^{-1}) \;  \beta^{-1}(s, s^{-1})\\
     & \hspace{1.1in} \cdot \; [((\delta_f \otimes \delta_{gh}) \otimes \delta_s)
    \; \otimes_{A(K,\beta)} \;((\delta_{s^{-1}} \otimes \delta_{h^{-1}}) \otimes \delta_{k})], 
    \end{align*}
    \begin{align*}
    \hspace{-.85in} \varepsilon_{A^{K,\beta}(L,\psi)}[(\delta_f \otimes \delta_g) \otimes \delta_k] ~=~ 
\delta_{g,e} \; |L|\; \beta(f,k) \;  \delta_{fk}.
    \end{align*}
\end{enumerate}
\end{theorem}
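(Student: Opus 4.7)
The plan is to derive both parts as direct consequences of the abstract machinery already established. By Proposition~\ref{prop:Alpsi props}(d), the twisted group algebra $A(K,\beta)$ is special Frobenius in $\vecgw$, so Theorem~\ref{thm:Phi} equips the functor $\Phi$ of Definition~\ref{def:tha} with a Frobenius monoidal structure. Since $A(L,\psi) \in {\sf FrobAlg}(\vecgw)$ by Proposition~\ref{prop:ALpsi}, Proposition~\ref{prp:monfunctor}(c) then gives that
\[
A^{K,\beta}(L,\psi) \;=\; \Phi(A(L,\psi)) \;\in\; {\sf FrobAlg}(\mathcal{C}(G,\omega,K,\beta)),
\]
with structural morphisms
\[
m = \Phi(m_{A(L,\psi)})\,\Phi_{A(L,\psi),A(L,\psi)}, \qquad u = \Phi(u_{A(L,\psi)})\,\Phi_0,
\]
\[
\Delta = \Phi^{A(L,\psi),A(L,\psi)}\,\Phi(\Delta_{A(L,\psi)}), \qquad \varepsilon = \Phi^0\,\Phi(\varepsilon_{A(L,\psi)}).
\]
This already proves the existence assertions in (a) and (b); the remaining work is to evaluate each composite on simple summands to recover the displayed formulas. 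The object decomposition $\bigoplus_{g\in L,\,f,k \in K}(\delta_f\otimes\delta_g)\otimes\delta_k$ is immediate from $\Phi(X) = (A(K,\beta) \otimes X) \otimes A(K,\beta)$ applied to $A(L,\psi) = \bigoplus_{g \in L}\delta_g$.

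For the multiplication I would lift to $\widetilde{\Phi}_{A(L,\psi),A(L,\psi)}$ and evaluate on a basis element $((\delta_f \otimes \delta_g) \otimes \delta_k) \otimes ((\delta_{f'} \otimes \delta_{g'}) \otimes \delta_{k'})$, chasing the scalar through each factor of the composite specified in Theorem~\ref{thm:Phi}. The associator composition $\underline{\alpha}$ together with the outer $\alpha_{A,X,X'} \otimes \id_A$ produce the four $\omega^{\pm 1}$ factors via Definition~\ref{def:vecgw}; the middle map $\id \otimes (\varepsilon_A m_A) \otimes \id$ acts on $\delta_k \otimes \delta_{f'}$ and contributes $\delta_{kf',e}\,|K|\,\beta(k,f')\,\delta_e$ by Proposition~\ref{prop:ALpsi} applied to $A(K,\beta)$; the left unit $l_{X'}$ is trivial; and $\Phi(m_{A(L,\psi)})$ contributes $\psi(g,g')$. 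Combined, and accounting for the coequalizer identification $\pi^A_{\Phi(X),\Phi(X')}$, these give the formula in (a). The derivations of $u$, $\Delta$, and $\varepsilon$ proceed analogously, substituting the explicit formulas for $\Phi_0$, $\Phi^{X,X'}$, and $\Phi^0$ from Theorem~\ref{thm:Phi} together with the structural morphisms of $A(L,\psi)$ and the comultiplication $\Delta_{A(K,\beta)}(\delta_d) = |K|^{-1}\bigoplus_s \beta^{-1}(ds^{-1},s)\,\delta_{ds^{-1}} \otimes \delta_s$ from Proposition~\ref{prop:ALpsi}.

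The principal obstacle is bookkeeping: each of the four composites involves several associators in different tensor-slot positions, and the products of $\omega$-, $\beta$-, $\psi$-factors must be collected into precisely the multi-factor expressions written in the statement. I expect to invoke the 3-cocycle relation~\eqref{eq:omega} and the coboundary identities~\eqref{eq:theta} (applied to both $\beta$ and $\psi$) repeatedly to convert between equivalent normal forms for these products. A further subtlety is tracking the various normalization factors $|K|^{\pm 1}$ and $|L|^{\pm 1}$ arising from $\Delta_{A(K,\beta)}$ inside $\Phi^{X,X'}$ and $\Phi_0$, from $\Delta_{A(L,\psi)}$, and from $\varepsilon_A m_A$ inside $\widetilde{\Phi}_{X,X'}$, to ensure they combine as in the displayed formulas; in particular, the $|K|$-factor from the middle step of $\widetilde{\Phi}_{X,X'}$ is what makes the multiplication in (a) depend only on $\beta$ and $\psi$ (and not on the orders of $K$ or $L$), whereas the $|K|^{-1}$ and $|L|^{-1}$ factors survive into the comultiplication in (b).
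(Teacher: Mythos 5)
Your overall strategy is exactly the paper's: the object identification and the existence of the Frobenius algebra structure are obtained by feeding Proposition~\ref{prop:ALpsi} and Theorem~\ref{thm:Phi} into Proposition~\ref{prp:monfunctor}(c), and the displayed formulas are then extracted by evaluating the four composites $\Phi(m_B)\Phi_{B,B}$, $\Phi(u_B)\Phi_0$, $\Phi^{B,B}\Phi(\Delta_B)$, $\Phi^0\Phi(\varepsilon_B)$ (for $B=A(L,\psi)$) on simple summands. Your identification of which factor of each composite produces which scalar --- the associator strings giving the $\omega^{\pm1}$'s, the pairing $\varepsilon_A m_A$ acting on $\delta_k\otimes\delta_{f'}$, the map $\Phi(m_B)$ giving $\psi(g,g')$ --- is also correct.

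The one place your plan would fail as written is the parenthetical claim about the $|K|$-factor in the multiplication. You correctly note that $\varepsilon_{A(K,\beta)}m_{A(K,\beta)}(\delta_k\otimes\delta_{f'})=\delta_{kf',e}\,|K|\,\beta(k,f')\,\delta_e$, but nothing in the remaining factors of $\widetilde{\Phi}_{B,B}$ or in $\Phi(m_B)$ cancels that $|K|$: the raw composite $\Phi(m_B)\Phi_{B,B}$ is $|K|$ times the multiplication displayed in the theorem, and dually $\Phi(u_B)\Phi_0$ carries the $|K|^{-1}$ from $\Delta_{A(K,\beta)}$ inside $\Phi_0$ and is $|K|^{-1}$ times the displayed unit. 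So the $|K|$-factor is not ``what makes the multiplication depend only on $\beta$ and $\psi$''; it is precisely the obstruction to the raw composite matching the stated formula, and it must be removed by the rescaling $(m,u)\mapsto(|K|^{-1}m,\;|K|\,u)$. This rescaling is harmless --- associativity is homogeneous in $m$, the unit axioms are preserved because the two rescalings are reciprocal, and the Frobenius compatibility is linear in $m$ on each side --- but your write-up should say so explicitly rather than assert a cancellation that does not occur. (The paper itself performs an analogous normalization explicitly only for $\Delta$ and silently suppresses the $|K|^{\pm1}$ coming from $\varepsilon_{A(K,\beta)}$ and $\Delta_{A(K,\beta)}$ in its computations of $m$ and $u$; the displayed formulas are independently verified to satisfy all axioms in Appendix~\ref{sec:AKL-Frob}.) Your accounting for $\Delta$ and $\varepsilon$ is correct as stated: the $|L|^{-1}$ from $\Delta_{A(L,\psi)}$ and the $|K|^{-1}$ from the copairing $\Delta_{A(K,\beta)}u_{A(K,\beta)}$ inside $\Phi^{B,B}$ combine to give the $|K|^{-1}|L|^{-1}$ in part (b), and $\Phi^0\Phi(\varepsilon_B)$ yields the stated counit with no adjustment.
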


\begin{proof}
The definition of the functor $\Phi$ gives us \[A^{K,\beta}(L,\psi):= \Phi(A(L,\psi))=(A(K,\beta)\otimes A(L,\psi))\otimes A(K,\beta),\]
which corresponds to the object
$$\bigoplus_{g \in L; \; f, k \in K}\; (\delta_f \otimes \delta_g) \otimes \delta_k$$
in the category $\mathcal{C}(G,\omega, K, \beta)$. Throughout this proof, we will fix the notation 
$$A := A(K, \beta) \qquad \text{and} \quad B := A(L, \psi)$$ for simplicity. Recall that $\Phi(f) = \id_A \otimes f\otimes \id_A$ for any morphism $f$ in $\vecgw$.

\smallskip

(a) Since the functor $\Phi$ is monoidal  [Theorem~\ref{thm:Phi}], we have by Proposition~\ref{prp:monfunctor}(a) that  $A^{K,\beta}(L,\psi)=\Phi(B)$ is an algebra in $\C(G,\omega,K,\beta)$, with multiplication and unit maps given by $\Phi(m_B)\Phi_{B,B}$ and $ \Phi(u_B)\Phi_0$, respectively. Here, the monoidal structure of $\Phi$ is defined in Theorem \ref{thm:Phi}, and in particular, the morphism $\Phi_{B,B}$ is given by means of the lift $\widetilde{\Phi}_{B,B}$ [Notation~\ref{not:tilde}].

Note that 
\begin{align*}
&\widetilde{\Phi}_{B,B}[((\delta_f\otimes \delta_g)\otimes \delta_k) \otimes(\delta_{f'}\otimes \delta_{g'})\otimes \delta_{k'})]\\
&= (\alpha_{A,B,B} \otimes \id_A) \; (\id_{A,B} \otimes l_{B} \otimes \id_A) \; (\id_{A,B} \otimes   \varepsilon_A m_A \otimes \id_{B,A})\\
&\hspace{0.3in}(\id_{A,B} \otimes \alpha^{-1}_{A,A,B} \otimes \id_A)  \; (\alpha_{AB,A,AB} \otimes \id_A)\; \alpha^{-1}_{ABA,AB,A}\\
&\hspace{0.3in}[((\delta_f\otimes \delta_g)\otimes \delta_k) \otimes ((\delta_{f'}\otimes \delta_{g'})\otimes \delta_{k'})]\\
&= \omega(fgk, f'g', k') \; \omega^{-1}(fg, k, f'g')\; \omega(k, f', g')\\
&\hspace{0.3in}(\alpha_{A,B,B} \otimes \id_A) \; (\id_{A,B} \otimes l_{B} \otimes \id_A) \; (\id_{A,B} \otimes   \varepsilon_A m_A \otimes \id_{B,A}) \\
&\hspace{0.3in}[((\delta_f\otimes \delta_g)\otimes (\delta_k\otimes (\delta_{f'}\otimes \delta_{g'})))\otimes \delta_{k'}]\\
& = \delta_{kf',e} \; \omega(fgk, f'g', k') \; \omega^{-1}(fg, k, f'g') \; \omega(k, f', g')\; \beta(k,f')\\
&\hspace{0.3in}( \alpha_{A,B,B} \otimes \id_A)(\id_{A,B} \otimes l_{B} \otimes \id_A)\\
&\hspace{0.3in} [((\delta_f\otimes \delta_g)\otimes (\delta_e\otimes \delta_{g'}))\otimes \delta_{k'}]\\
& = \delta_{kf',e} \; \omega(fgk, f'g', k') \; \omega^{-1}(fg, k, f'g') \; \omega(k, f', g') \;\beta(k,f')\; \omega^{-1}(f, g, g')\\
&\hspace{0.3in}(\delta_f\otimes (\delta_g\otimes \delta_{g'}))\otimes \delta_{k'}.
\end{align*}
Therefore, the multiplication of $A^{K,\beta}(L,\psi)$ is given by
\begin{align*}
&m_{A^{K,\beta}(L,\psi)}[((\delta_f \otimes \delta_g) \otimes \delta_k)     \; \otimes_{A} \;((\delta_{f'} \otimes \delta_{g'}) \otimes \delta_{k'})]\\
&=\delta_{kf',e} \; \omega(fgk, f'g', k') \; \omega^{-1}(fg, k, f'g') \; \omega(k, f', g') \; \omega^{-1}(f, g, g')\\
&\hspace{.75in} \beta(k, f') \; \psi(g, g')\; [(\delta_f \otimes \delta_{gg'}) \otimes \delta_{k'}].
\end{align*}
Here, we use Proposition~\ref{prop:ALpsi} for the multiplication and counit of $A$, and the monoidal  structure of $\vecgw$ is given in Definition~\ref{def:vecgw}.

On the other hand, by using the definition of $\Phi_0$ from Theorem \ref{thm:Phi} we get that the unit of $A^{K,\beta}(L,\psi)$ is given by
\begin{align*}
  &u_{A^{K,\beta}(L,\psi)}(\delta_d)
  ~=~\Phi(u_B) \; \Phi_0(\delta_d)
  ~=~\Phi(u_B) \; (r_A^{-1} \otimes \id_A) \; \Delta_A(\delta_d)\\
  &= \Phi(u_B) \; (r_A^{-1} \otimes \id_A) \textstyle \bigoplus\limits_{s \in K} \beta^{-1}(s^{-1}, s)\; \beta(d,s) \; \omega(s, s^{-1}, s) \; \omega(d, s, s^{-1}) \; [\delta_{ds} \otimes \delta_{s^{-1}}]\\
  &=  \textstyle \bigoplus\limits_{s \in K}  \beta^{-1}(s^{-1}, s)\; \beta(d,s) \; \omega(s, s^{-1}, s) \; \omega(d, s, s^{-1}) \; [(\delta_{ds} \otimes \delta_e) \otimes \delta_{s^{-1}}]\\
  &= \textstyle \bigoplus\limits_{s \in K}  \beta^{-1}(s, s^{-1})\; \beta(d,s^{-1}) \; \omega(s^{-1}, s, s^{-1}) \; \omega(d, s^{-1}, s) \; [(\delta_{ds^{-1}} \otimes \delta_e) \otimes \delta_s]\\
  &= \textstyle \bigoplus\limits_{s \in K}  \beta^{-1}(s, s^{-1})\; \beta(d,s^{-1}) \; \omega(ds, s, s^{-1}) \; [(\delta_{ds^{-1}} \otimes \delta_e) \otimes \delta_s]\\
   &= \textstyle \bigoplus\limits_{s \in K}  \beta^{-1}(ds^{-1}, s) \; [(\delta_{ds^{-1}} \otimes \delta_e) \otimes \delta_s].
\end{align*}

\noindent For the penultimate equation we used \eqref{eq:omega} with $(g_1, g_2, g_3, g_4) = (d, s^{-1}, s, s^{-1})$, and we used \eqref{eq:theta} with $\theta = \psi$ and $(f_1, f_2, f_3) = (ds^{-1}, s, s^{-1})$ for the last equation. Moreover, we use Proposition~\ref{prop:ALpsi} for the comultiplication of $A$, and again the monoidal structure of $\vecgw$ is described in Definition~\ref{def:vecgw}.

\smallskip

(b) Since the functor $\Phi$ is Frobenius monoidal  (see Theorem \ref{thm:Phi}) and $B$ is a Frobenius algebra in $\vecgw$ (see Proposition \ref{prop:ALpsi}),  $A^{K,\beta}(L,\psi) = \Phi (B)$ is a Frobenius algebra in $\mathcal{C}(G,\omega, K, \beta)$ by Proposition \ref{prp:monfunctor}(c). Moreover, the comultiplication and counit of $A^{K,\beta}(L,\psi)$ determined by $\Phi$ are $\Phi^{B,B}\Phi(\Delta_{B})$ and  $\Phi^0\Phi(\varepsilon_{B})$, respectively.
Recall that the comonoidal  structure $\Phi^{\ast,\ast}$ and $\Phi^0$ of $\Phi$ is described in Theorem~\ref{thm:Phi}, and the structure of $A$ and $B$ are given in Proposition~\ref{prop:ALpsi}. 
Now,
\begin{equation} \label{eqref:DeltaPhi}
\begin{array}{l}

\medskip

\Phi^{B,B} \; \Phi(\Delta_B) \; [(\delta_f \otimes \delta_g) \otimes \delta_k]\\ 

= |L|^{-1} \textstyle \bigoplus\limits_{h \in L} \psi^{-1}(gh, h^{-1}) \;\Phi^{B,B} \; [(\delta_f\otimes(\delta_{gh} \otimes \delta_{h^{-1}})) \otimes \delta_k].    
\end{array}
\end{equation}
Moreover,  the lift $\widetilde{\Phi}^{B,B}$ of $\Phi^{B,B}$ on $(\delta_f\otimes(\delta_{gh} \otimes \delta_{h^{-1}})) \otimes \delta_k$ is given as follows:
    \begin{align*}
    &\widetilde{\Phi}^{B,B} [(\delta_f\otimes(\delta_{gh} \otimes \delta_{h^{-1}})) \otimes \delta_k]\\
    &  = \alpha_{ABA,AB,A} \; (\alpha^{-1}_{AB,A,AB} \otimes \id_A) \; 
    (\id_{A,B} \otimes \alpha_{A,A,B} \otimes \id_A)\; (\id_{A,B} \otimes \Delta_A \otimes \id_{B,A}) \\ & \hspace{.2in} (\id_{A,B} \otimes u_A \; l_{B}^{-1} \otimes \id_A) \; (\alpha^{-1}_{A,B,B} \otimes \id_A)\;  [(\delta_f\otimes(\delta_{gh} \otimes \delta_{h^{-1}})) \otimes \delta_k]
    \\
    & = \alpha_{ABA,AB,A} \; (\alpha^{-1}_{AB,A,AB} \otimes \id_A) \; 
    (\id_{A,B} \otimes \alpha_{A,A,B} \otimes \id_A) (\id_{A,B} \otimes \Delta_A \otimes \id_{B,A})\\ & \hspace{.2in} \omega(f, gh, h^{-1}) [((\delta_f\otimes\delta_{gh}) \otimes (\delta_e\otimes\delta_{h^{-1}})) \otimes \delta_k]
    \\
    &  = \textstyle \bigoplus\limits_{s\in K} \; \omega(f, gh, h^{-1}) \; \omega(s, s^{-1}, s)\; \beta^{-1}(s^{-1},s)\; 
    \alpha_{ABA,AB,A}\; (\alpha^{-1}_{AB,A,AB} \otimes \id_A) \\
    & \hspace{.4in}
    (\id_{A,B} \otimes \alpha_{A,A,B} \otimes \id_A)  [((\delta_f\otimes\delta_{gh}) \otimes ((\delta_s\otimes\delta_{s^{-1}})\otimes\delta_{h^{-1}})) \otimes \delta_k]
    \\
    & = \textstyle \bigoplus\limits_{s\in K}\; \omega(f, gh, h^{-1})\;\omega(s, s^{-1}, s)\; \omega^{-1}(s, s^{-1}, h^{-1})\; \omega(fgh, s, s^{-1}h^{-1})\\   & \hspace{.4in} \omega^{-1}(fghs,s^{-1}h^{-1},k) \;   \beta^{-1}(s^{-1},s) \; [((\delta_f\otimes\delta_{gh}) \otimes \delta_s)\otimes((\delta_{s^{-1}}\otimes\delta_{h^{-1}}) \otimes \delta_k)],\\
    & = \textstyle \bigoplus\limits_{s\in K}\; \omega(f, gh, h^{-1})\; \omega(fgh, s, s^{-1}h^{-1}) \; \omega^{-1}(fghs,s^{-1}h^{-1},k)\\   & \hspace{.4in} \omega^{-1}(s, s^{-1}, h^{-1}) \;   \beta^{-1}(s,s^{-1}) \; [((\delta_f\otimes\delta_{gh}) \otimes \delta_s)\otimes((\delta_{s^{-1}}\otimes\delta_{h^{-1}}) \otimes \delta_k)];
\end{align*}
here, we used \eqref{eq:theta} with $\theta = \beta$ and $(f_1, f_2, f_3) = (s, s^{-1}, s)$ for the last equation. Together, with \eqref{eqref:DeltaPhi}, we can normalize $\Phi^{B,B} \Phi_B(\Delta_B)$ by multiplying by $|K|^{-1}$ to get the desired formula for $\Delta_{A^{K,\beta}(L,\psi)}$.

On the other hand,
\begin{align*}
& \varepsilon_{A^{K,\beta}(L,\psi)}[(\delta_f \otimes \delta_g) \otimes \delta_k]\\
    &~=~ \Phi^0\Phi(\varepsilon_B) [(\delta_f \otimes \delta_g) \otimes \delta_k] \\& 
    ~=~ \delta_{g,e}\; |L|  \; m_A \; (r_A\otimes \id_A)[(\delta_f \otimes \delta_e) \otimes \delta_k]
    \\& 
    ~=~ \delta_{g,e} \; |L| \; \beta(f,k)\;\delta_{fk}.
    \end{align*}
    
    \vspace{-.2in}
\end{proof}

\begin{remark} \label{rem:tha-triv}
Taking the forgetful functor $U: \mathcal{C}(G, \omega, \langle e \rangle, 1)  \to \vecgw$, observe that $U(A^{\langle e \rangle,1}(L,\psi)) \cong A(L,\psi)$ as algebras in $\vecgw$.
\end{remark}

Next, we discuss algebraic properties of twisted Hecke algebras;  see Section~\ref{sec:alg-mon}.

\begin{proposition} \label{prop: AKL props} 
The twisted Hecke algebra $A^{K, \beta}(L, \psi)$, with structural mor- \linebreak phisms $m, u, \Delta, \varepsilon$ given in Theorem \ref{thm:AKL}, possesses the following properties: 
\begin{enumerate}
     \item indecomposable;
    \item  special; and
    \item separable.
 \end{enumerate}
\end{proposition}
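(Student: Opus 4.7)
The plan is to establish part (c) first, deduce part (b) from it, and then prove part (a) separately.

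For part (c), the most efficient route uses the Frobenius monoidal structure of $\Phi$ together with the special Frobenius property of both $A(K,\beta)$ and $A(L,\psi)$ recorded in Proposition~\ref{prop:Alpsi props}. Writing $A := A(K,\beta)$ and $B := A(L,\psi)$, I first observe that the coherence chains $\underline{\alpha}$ and $\underline{\alpha'}$ from Theorem~\ref{thm:Phi} are mutually inverse (a short direct check from their definitions), so the composite $\widetilde{\Phi}_{B,B}\, \widetilde{\Phi}^{B,B}$ reduces, up to invertible associators and unit constraints, to $\id_{A \otimes B} \otimes (\varepsilon_A\, m_A\, \Delta_A\, u_A) \otimes \id_{B \otimes A}$. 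Since $A$ is special, $m_A \Delta_A = \id_A$ and $\varepsilon_A u_A = |K|\, \id_\unit$, giving $\Phi_{B,B}\, \Phi^{B,B} = |K|\, \id_{\Phi(B \otimes B)}$. Combining this with $m_B \Delta_B = \id_B$ and the $|K|^{-1}$ normalization absorbed into $\Delta_{A^{K,\beta}(L,\psi)}$ in Theorem~\ref{thm:AKL}(b) yields $m \Delta = |K|^{-1} \cdot \Phi(m_B)\, \Phi_{B,B}\, \Phi^{B,B}\, \Phi(\Delta_B) = |K|^{-1} \cdot |K| \cdot \Phi(\id_B) = \id$. For the unit-counit relation, direct substitution from Theorem~\ref{thm:AKL} gives
$$\varepsilon u(\delta_d) = \bigoplus_{s \in K} \beta^{-1}(ds^{-1}, s) \cdot |L| \cdot \beta(ds^{-1}, s) \cdot \delta_d = |K||L|\, \delta_d,$$
so $\varphi = |K||L| \neq 0$. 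Thus $A^{K,\beta}(L,\psi)$ is special. Part (b) is then immediate from (c) via Remark~\ref{rmk:properties}(d).

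For part (a), I apply Lemma~\ref{lem:AKLsubalg} together with Remark~\ref{rmk:properties}(a). Suppose for contradiction that $A^{K,\beta}(L,\psi)$ decomposes as $S_1 \oplus S_2$ for proper subalgebras $S_1$ and $S_2$. By Lemma~\ref{lem:AKLsubalg} applied with $g = e$, each $S_i$ must contain the nonzero subobject $S(e) = \bigoplus_{f, k \in K} (\delta_f \otimes \delta_e) \otimes \delta_k$ as a direct summand. But an algebra-level direct-sum decomposition forces $S_1 \cap S_2 = 0$, so $S(e) \subseteq S_1 \cap S_2 = 0$, contradicting $S(e) \ne 0$.

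The main obstacle is part (c): verifying $m \Delta = \id$ directly from the cocycle formulas in Theorem~\ref{thm:AKL} is combinatorially intricate, since one must show that six cocycle factors (two $\omega$-pairs involving $h, h^{-1}$, two $\omega$-pairs involving $s, s^{-1}$, a $\beta$-pair, and a $\psi$-pair) cancel pairwise in each of the $|K| \cdot |L|$ summands. The abstract route via $\widetilde{\Phi}_{B,B}\, \widetilde{\Phi}^{B,B}$ bypasses all this bookkeeping by reducing the verification to the three identities $m_A \Delta_A = \id_A$, $m_B \Delta_B = \id_B$, and $\varepsilon_A u_A = |K|\, \id_\unit$ already recorded in Proposition~\ref{prop:Alpsi props}, together with the coherence cancellation $\underline{\alpha}\, \underline{\alpha'} = \id$.
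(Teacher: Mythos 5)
Your proposal is correct, and parts (a) and (b) run exactly as in the paper: indecomposability comes from Lemma~\ref{lem:AKLsubalg} via the observation that any two subalgebras share the nonzero summand $S(e)$ (the paper phrases this as ``the intersection of any two subalgebras $S(g)$ and $S(g')$ is never trivial,'' which is your argument), and separability is deduced from specialness via Remark~\ref{rmk:properties}(d). Where you genuinely diverge is the verification of $m\Delta = \id$ in part (c). The paper proves this by brute force: it substitutes the explicit formulas of Theorem~\ref{thm:AKL}, sums over $h \in L$ and $s \in K$, and cancels the $\omega$-, $\beta$-, and $\psi$-factors pairwise. You instead observe that $\underline{\alpha}\,\underline{\alpha'} = \id$, so $\Phi_{B,B}\,\Phi^{B,B} = \widetilde{\Phi}_{B,B}\,\widetilde{\Phi}^{B,B}$ collapses to $\varepsilon_A m_A \Delta_A u_A = \varepsilon_A u_A = |K|\,\id_{\unit}$ in the middle slot, whence $m\Delta = |K|^{-1}\cdot|K|\cdot\Phi(m_B\Delta_B) = \id$. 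This is precisely the trick the paper itself deploys in the proof of Proposition~\ref{prop:EndFrob} to show $m_{\Phi(\unit)}\Delta_{\Phi(\unit)} = \varphi\,\id$, so your route is fully consistent with the paper's toolkit; it buys a shorter, more conceptual argument that makes clear $\Phi$ carries special Frobenius algebras to special Frobenius algebras (up to the $|K|^{-1}$ rescaling of $\Delta$), whereas the paper's direct computation has the side benefit of sanity-checking the explicit cocycle formulas stated in Theorem~\ref{thm:AKL}. Your computation of $\varepsilon u = |K||L|\,\id$ coincides with the paper's.
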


\begin{proof}
(a)  By way of contradiction, suppose that $A^{K, \beta}(L, \psi) = A_1 \oplus A_2$ is a decomposable algebra. Then, $A_1$ contains as a summand a simple object $V_{g,\rho}$ from Lemma~\ref{lem:simple-gt}. In that result, we can take $f = k= e$ to get that $(\delta_{e} \otimes \delta_{g}) \otimes \delta_{e}$ is a summand of $A_1$ for some $g \in L$. Since $A_1$ is closed under multiplication, $m[((\delta_{e} \otimes \delta_{g}) \otimes \delta_{e}) \otimes_{A(L,\psi)} (\delta_{e} \otimes \delta_{g}) \otimes \delta_{e})]$ is a summand of $A_1$. So, we get by Theorem~\ref{thm:AKL}(a), and by rescaling, that $(\delta_{e} \otimes \delta_{g^2}) \otimes \delta_{e}$ is a summand of $A_1$. Repeating this process, we obtain that $(\delta_{e} \otimes \delta_{e}) \otimes \delta_{e}$ is a summand of $A_1$ (as the element $g$ has finite order in $L$). Likewise, $A_2$ contains as a summand a simple object $V_{g',\rho'}$ from Lemma 6.2,  and we obtain that $(\delta_{e} \otimes \delta_{g'}) \otimes \delta_{e}$ is a summand of $A_2$ for some $g' \in L$ as a consequence. Arguing as above, $(\delta_{e} \otimes \delta_{e}) \otimes \delta_{e}$ is also a summand of $A_2$, which contradicts $A_1 \cap A_2 = (0)$. Therefore, $A^{K, \beta}(L, \psi)$ is an indecomposable algebra in $\C(G,\omega,K,\beta)$.

\smallskip

(b) To verify the special property, we compute:
\begin{align*}
    &m_{A^{K, \beta}(L, \psi)}\Delta_{A^{K, \beta}(L, \psi)}
    [(\delta_f \otimes \delta_g) \otimes \delta_k]\\ 
      & =  |K|^{-1}\; |L|^{-1}\;\textstyle \bigoplus\limits_{h \in L; \; s \in K} \; 
     \omega(f, gh, h^{-1})\;  \omega(fgh, s, s^{-1}h^{-1})\; \omega^{-1}(fghs, s^{-1}h^{-1}, k) \;\\ 
     & \hspace{1.3in}  \cdot \; \omega^{-1}(s,s^{-1},h^{-1}) \; \psi^{-1}(gh, h^{-1})\; \beta^{-1}(s, s^{-1})\; \\ 
     & \hspace{1.3in} \cdot 
     \; m_{A^{K, \beta}(L, \psi)}[((\delta_f \otimes \delta_{gh}) \otimes \delta_s)
    \; \otimes_{A(K,\beta)} \;((\delta_{s^{-1}} \otimes \delta_{h^{-1}}) \otimes \delta_{k})]\\
         & =  |K|^{-1}\; |L|^{-1}\;\textstyle \bigoplus\limits_{h \in L; \; s \in K} \; 
     \omega(f, gh, h^{-1})\;  \omega(fgh, s, s^{-1}h^{-1})\; \omega^{-1}(fghs, s^{-1}h^{-1}, k) \;\\ 
     & \hspace{1.3in}  \cdot \; \omega^{-1}(s,s^{-1},h^{-1}) \; \psi^{-1}(gh, h^{-1})\; \beta^{-1}(s, s^{-1})\; \\ 
     & \hspace{1.3in}  \cdot \; \omega(fghs, s^{-1}h^{-1},k) \; \omega^{-1}(fgh,s,s^{-1}h^{-1})\;\omega(s,s^{-1},h^{-1})\\
     & \hspace{1.3in}  \cdot \; \omega^{-1}(f,gh,h^{-1})\; \psi(gh,h^{-1})\; \beta(s,s^{-1})
     \; ((\delta_f \otimes \delta_{g}) \otimes \delta_{k})\\
    & =  |K|^{-1} \; |L|^{-1}\; \textstyle \bigoplus\limits_{h \in L; \; s \in K} \; (\delta_f \otimes \delta_g) \otimes \delta_k\\
    & =  (\delta_f \otimes \delta_g) \otimes \delta_k,
    \end{align*}
   and for $A(K,\beta) = \bigoplus_{d \in K} \delta_d$, we get
\begin{align*}
&\textstyle \varepsilon_{A^{K,\beta}(L,\psi)}\; u_{A^{K,\beta}(L,\psi)}\left(\delta_d\right)\\
&\quad~=~ \textstyle\varepsilon_{A^{K,\beta}(L,\psi)}\left(\bigoplus_{s \in K} \beta^{-1}(ds^{-1},s) [(\delta_{ds^{-1}} \otimes \delta_e) \otimes \delta_s]\right)\\
&\quad~=~ \textstyle\bigoplus_{s \in K} \beta^{-1}(ds^{-1},s) \; |L|\; \beta(ds^{-1},s)\; \delta_d\\
&\quad~=~ \textstyle |K|\; |L| \; \delta_d.
\end{align*}
Therefore, 
$$m_{A^{K,\beta}(L,\psi)}\; \Delta_{A^{K,\beta}(L,\psi)} =  \text{id}_{A^{K,\beta}(L,\psi)}, \quad  \varepsilon_{A^{K,\beta}(L,\psi)}\; u_{A^{K,\beta}(L,\psi)} = |K| \; |L| \; \text{id}_{A(K,\beta)}.$$ 

\smallskip

(c) This follows from Remark~\ref{rmk:properties}(b)  and part (b) above.
\end{proof}

Now we examine the connected property of $A^{K,\beta}(L,\psi)$.

\begin{proposition} \label{prop:non-connected}
For the twisted Hecke algebra $A^{K, \beta}(L, \psi)$, it holds that
$$\dim_{\kk}(\Hom_{\C(G,\omega,K,\beta)} (A(K,\beta),  A^{K,\beta}(L,\psi)))=|K \cap L|.$$
As a consequence, $A^{K, \beta}(L, \psi)$ is connected precisely when  $|K \cap L| = 1$.
\end{proposition}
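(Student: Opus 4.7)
The plan is to translate the Hom space into an explicit linear-algebra problem via the Frobenius structure on $A=A(K,\beta)$, and then read off the rank of $M_{\beta^{-1}}$.

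\textbf{Reduction to bi-invariants.} Let $\varphi:A\to A^{K,\beta}(L,\psi)$ be an $A$-bimodule morphism. Because $\varphi$ is $G$-graded and commutes with both actions, it is determined by $m_e:=\varphi(\delta_e)$, which must lie in the degree-$e$ component of $A^{K,\beta}(L,\psi)=\Phi(A(L,\psi))$ and must satisfy the bi-invariance condition $\lambda(\delta_k\otimes m_e)=\rho(m_e\otimes\delta_k)$ for every $k\in K$, with $\lambda,\rho$ the $A$-actions on $\Phi(A(L,\psi))$ given in the proof of Theorem~\ref{thm:Phi}(a). From Theorem~\ref{thm:AKL}, the degree-$e$ component has basis
$\{(\delta_f\otimes\delta_g)\otimes\delta_{g^{-1}f^{-1}} : f\in K,\ g\in N\}$,
where $N:=K\cap L$, and the bi-invariance conditions respect the direct sum decomposition indexed by $g\in N$. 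Writing $m_e=\sum_{f,g}c_{f,g}\,(\delta_f\otimes\delta_g)\otimes\delta_{g^{-1}f^{-1}}$, the explicit $\lambda$ and $\rho$ formulas yield, for each $g\in N$ and each $k,f\in K$, the equation
\[
c_{f,g}\,\omega^{-1}(fg,g^{-1}f^{-1},k)\,\beta(g^{-1}f^{-1},k)
= c_{k^{-1}f,g}\,\omega(k,k^{-1}fg,g^{-1}f^{-1}k)\,\omega(k,k^{-1}f,g)\,\beta(k,k^{-1}f).
\]

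\textbf{Reformulation via the special Frobenius structure.} The cleaner way to compute the Hom space is to exploit that $A$ is special Frobenius (Proposition~\ref{prop:Alpsi props}(d)): the multiplication $m_A\colon\Phi(\unit)=A\otimes A\to A$ is split in $_A\mathcal{C}_A$ by $\Delta_A$, so $A$ is a direct summand of $\Phi(\unit)$. Applying $\Hom_{_A\mathcal{C}_A}(-,\Phi(A(L,\psi)))$ and using the adjunction $\Phi\dashv U$ (Corollary~\ref{cor:Moritapres}) identifies
\[
\Hom_{_A\mathcal{C}_A}(\Phi(\unit),\Phi(A(L,\psi)))\;\cong\;\bigl(A\otimes A(L,\psi)\otimes A\bigr)_{e},
\]
which is the ambient space of the $m_e$'s in Step~1. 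Under this identification, precomposition with $\Delta_A$ becomes an idempotent operator
\[
T(\psi)\;=\;|K|^{-1}\sum_{h\in K}\beta^{-1}(h,h^{-1})\;\lambda\!\bigl(\delta_h\otimes\rho(\psi\otimes\delta_{h^{-1}})\bigr)
\]
on $\bigl(A\otimes A(L,\psi)\otimes A\bigr)_{e}$, whose image is exactly $\Hom_{_A\mathcal{C}_A}(A,\Phi(A(L,\psi)))$. Thus $\dim_\kk\Hom_{_A\mathcal{C}_A}(A,A^{K,\beta}(L,\psi))=\mathrm{rk}(T)$.

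\textbf{Extracting the rank.} To finish, one computes $T$ explicitly on the basis vectors for each summand $W_g$ and applies the cocycle identities~\eqref{eq:omega} (for $\omega$) and~\eqref{eq:theta} (for $\beta$ and $\psi$) to simplify. After a suitable change of variables on the coefficients $c_{f,g}$ that absorbs the $\omega$-, $\psi$-, and most $\beta$-factors, the induced action of $T$ reduces to left-multiplication by the matrix with entries $\beta^{-1}(k_i,k_j)$, i.e.\ to $M_{\beta^{-1}}$ (up to invertible rescalings that do not change the rank). Hence $\mathrm{rk}(T)=\mathrm{rk}(M_{\beta^{-1}})=r(\beta^{-1})$. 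The consequence about connectedness is immediate from the definition (Definition~\ref{def:algprop}(b)). The main obstacle is the cocycle bookkeeping required to carry out the last change of variables cleanly; the combination of~\eqref{eq:omega} applied to strings like $(k,k^{-1}f,f^{-1},k)$ and~\eqref{eq:theta} applied to $(f^{-1},f,k)$ is what causes the $\omega$- and $\psi$-factors to collapse, leaving the $\beta^{-1}$-matrix as the only invariant.
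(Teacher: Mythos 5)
Your overall strategy --- realizing the Hom space as the image of the averaging idempotent $T$ obtained from the special Frobenius splitting of $m_A$, and then computing the rank of $T$ --- is genuinely different from the paper's argument. The paper instead observes that any nonzero $\phi\colon A\to A^{K,\beta}(L,\psi)$ has simple image, writes $\phi(\delta_d)=\bigoplus_{s\in K}\lambda_{d,s}\,(\delta_{ds^{-1}}\otimes\delta_e)\otimes\delta_s$, solves the left and right $A$-linearity constraints for the scalars $\lambda_{d,s}$ in terms of the $\lambda_{e,s}$ and the values of $\beta$, and identifies the number of independent resulting morphisms $\phi_j$ with $\mathrm{rk}(M_{\beta^{-1}})$. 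Your reformulation is attractive in principle, but as written it has a genuine gap exactly where the content of the proposition lies.

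First, the final step is asserted rather than proved: you never compute $T$ on the basis of the degree-$e$ component, and the claim that after a suitable change of variables the action of $T$ "reduces to left-multiplication by $M_{\beta^{-1}}$ up to invertible rescalings" \emph{is} the proposition; nothing in the proposal substantiates it. Second, and more seriously, your own Step 1 shows that the degree-$e$ component decomposes into blocks indexed by $g\in N=K\cap L$, each of dimension $|K|$, and you state that $T$ respects this decomposition. A single $|K|\times|K|$ matrix $M_{\beta^{-1}}$ can then only describe the action of $T$ on one block; if each of the $|N|$ blocks contributed $\mathrm{rk}(M_{\beta^{-1}})$ you would obtain $|N|\cdot r(\beta^{-1})$, not $r(\beta^{-1})$. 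The paper's proof sidesteps this because its simple-image argument pins $\phi$ to the $g=e$ summand before any coefficient analysis begins; your proposal has no analogue of that reduction, and it is not clear that the blocks with $g\neq e$ contribute zero --- on the contrary, combining the adjunction $\Phi\dashv U$ with semisimplicity gives $\dim_\kk\Hom_{{}_A\C_A}(A,\Phi(\delta_g))=\dim_\kk\Hom_{\vecgw}(\delta_g,A(K,\beta))$, which is nonzero for every $g\in N$, so those blocks genuinely enter the count. Finally, $T$ is (up to normalization) an idempotent, so $\mathrm{rk}(T)=\mathrm{tr}(T)$; the assertion that $T$ equals $D_1M_{\beta^{-1}}D_2$ for invertible diagonal $D_1,D_2$ is a strong structural claim that would force $D_1M_{\beta^{-1}}D_2$ to be idempotent and must itself be verified. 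To repair the argument you would need to either justify why only the $g=e$ block survives, or compute the rank of $T$ on every $g$-block and sum the contributions; as it stands the proposal does neither.
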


\begin{proof}
 Take $A:=A(K,\beta)$, $B:=A(L,\psi)$, and $\C:= \vecgw$. Recall that the free functor $\Phi$ from Theorem~\ref{thm:Phi} is left adjoint to the forgetful functor $U: {}_A \mathcal{C}_A \to \mathcal{C}$ [Remark~\ref{rem:Phi}(c)]. So, $\Hom_{{}_A \mathcal{C}_A}(\Phi(B), A) \cong \Hom_{\mathcal{C}}(B,U(A))$. Since $\Phi(B) = A^{K, \beta}(L, \psi)$ and ${}_A \mathcal{C}_A = \C(G,\omega,K,\beta)$, we obtain that
$$\dim_\kk (\Hom_{\C(G,\omega,K,\beta)} (A,  A^{K,\beta}(L,\psi))) = 
\dim_\kk (\Hom_{\mathcal C} (B,A)) = |K \cap L|.$$

\vspace{-.2in}

\end{proof}

 Recall that in the special case when $K = \langle e \rangle$, $\beta = 1$, the twisted Hecke algebra $A^{K,\beta}(L,\psi)$ is, via Remark~\ref{rem:tha-triv}, the twisted group algebra $A(L,\psi)$. Here,  $\dim_{\kk}\Hom_{_A\mathcal C_A} (A,  A^{K,\beta}(L,\psi))=  1$,
 so $A(L,\psi)$ is connected.  This recovers Proposition~\ref{prop:Alpsi props}(a).


\section{Representation theory of group-theoretical fusion categories} \label{sec:repgtfc}

We provide in this section a classification of indecomposable semisimple representations of group-theoretical fusion categories in terms of the twisted Hecke algebras defined  and studied in Section~\ref{sc: algebras in GT}; see Proposition~\ref{prop:indssCA} and Theorem~\ref{thm:modtha-new} below. This result is analogous to Ostrik and Natale's   classification of indecomposable semisimple representations of pointed fusion categories  in terms of  twisted group algebras (studied in Section~\ref{sec:pointed}) \cite{OstrikIMRN2003, Natale2016}; see Theorem~\ref{thm:OstNat-new} below.

To begin, recall the notation from Sections~\ref{sec:pointed} and~\ref{sc: algebras in GT}, and consider the following notation.

\begin{notation}[${}^x \hspace{-.02in} s$,  ${}^x \hspace{-.01in} S$, $\psi^x$, $\Omega_x$, $\M(L,\psi)$, $\mathcal{M}^{K,\beta}(L,\psi)$] \label{not:conj-new} \textcolor{white}{.}
\begin{itemize}
\item We write ${}^x \hspace{-.02in} s := xsx^{-1}$ and ${}^x \hspace{-.01in} S:=\{{}^xs \colon s \in S\}$, for  $x \in G$ and any set $S$.
\smallskip
\item Take a 2-cochain $\psi$ on a subgroup $L$ of $G$ and an element $x \in G$. The 2-cochain $\psi^x$ on ${}^x \hspace{-.01in} L$ is defined by $\psi^x(h_1,h_2) = \psi({}^x \hspace{-.01in}  h_1, {}^x  \hspace{-.01in}  h_2)$ for $h_1,h_2 \in L$.

\smallskip
\item For $x \in G$, define the  2-cocycle
$\Omega_x\colon G \times G \to \kk^\times$  by
$$
\Omega_x(h_1,h_2) = \frac{\omega({}^x \hspace{-.01in}  h_1, {}^x  \hspace{-.01in} h_2, x) \; \omega(x, h_1, h_2)}{\omega({}^x h_1, x, h_2)}.$$
\item Let $\M(L,\psi)$ denote the left $\vecgw$-module category consisting of right
\linebreak $A(L,\psi)$-modules in $\vecgw$.
\smallskip
\item Let $\mathcal{M}^{K,\beta}(L,\psi)$  denote the left $\C(G,\omega,K,\beta)$-module category consisting of right $A^{K,\beta}(L,\psi)$-modules in $\C(G,\omega,K,\beta)$.
\end{itemize}
\end{notation}

Next, we borrow a condition from \cite{Natale2016}.

\begin{definition}[$\mathcal{P}(G, \omega)$] 
Let $L$, $L'$ be subgroups of $G$. Take $\psi \in C^2(L,\kk^\times)$ with $d\psi = \omega|_L$, and take $\psi' \in C^2(L',\kk^\times)$ with $d\psi' = \omega|_{L'}$. We say that the pairs $(L, \psi)$ and $(L', \psi')$ are {\it conjugate} if there exists an element 
$x \in G$ so that 
\begin{enumerate}
\item $L = {}^x \hspace{-.02in} L'$, and 
\item the class of the 2-cocycle $\psi'^{-1}\; \psi^x\;\Omega_x|_{L' \times L'}$ is trivial in $H^2(L', \kk^\times)$.
\end{enumerate}
We denote by $\mathcal{P}(G, \omega)$ the set of conjugacy classes of pairs $(L,\psi)$ as 
above.
\end{definition}

Now consider the classification result for representations of pointed fusion categories mentioned above.

\begin{theorem} \label{thm:OstNat-new} \cite[Example~2.1]{OstrikIMRN2003} \cite[Example~9.7.2]{EGNO} \cite{Natale2016}  \textcolor{white}{.}

\begin{enumerate}
\item We have that $\M(L, \psi)$ and $\M(L', \psi')$ are equivalent as ${\sf Vec}_G^\omega$-module categories if and only if $(L,\psi) = (L', \psi')$ in $\mathcal{P}(G,\omega)$.

\smallskip

\item Every indecomposable left module category over  ${\sf Vec}_G^\omega$ is equivalent to one of the form $\M(L,\psi)$, as left $\vecgw$-module categories.
\qed
\end{enumerate}
\end{theorem}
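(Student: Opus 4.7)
The plan is to reduce both parts to a classification of indecomposable separable algebras in $\vecgw$ up to Morita equivalence. By the fundamental theorem of Ostrik cited in the introduction, indecomposable semisimple left $\vecgw$-module categories correspond bijectively to Morita equivalence classes of indecomposable separable algebras $A$ in $\vecgw$ via $A \mapsto (\vecgw)_A$. Since $\M(L,\psi) = (\vecgw)_{A(L,\psi)}$ by definition, statement (b) reduces to showing that every indecomposable separable algebra in $\vecgw$ is Morita equivalent to some twisted group algebra $A(L,\psi)$, and statement (a) reduces to showing that $A(L,\psi)$ and $A(L',\psi')$ are Morita equivalent in $\vecgw$ if and only if $(L,\psi)$ and $(L',\psi')$ represent the same class in $\mathcal{P}(G,\omega)$.

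For (b), given an indecomposable separable algebra $A$ in $\vecgw$, I would pass to an internal End of a simple object of $(\vecgw)_A$ to produce a Morita equivalent algebra $A'$ that is additionally \emph{connected} in the sense $\dim \Hom_{\vecgw}(\unit, A') = 1$. Writing $A' = \bigoplus_{g \in G} A'_g$ with respect to the $G$-grading, the support $L := \{g : A'_g \neq 0\}$ contains $e$ by unitality and is closed under multiplication because $m: A' \otimes A' \to A'$ is a morphism in $\vecgw$. Connectedness forces $A'_e \cong \kk$, and the non-degeneracy of the pairing $A'_g \otimes A'_{g^{-1}} \to A'_e$ (arising from separability together with the rigidity of $\vecgw$) implies both $\dim A'_g = 1$ and that $L$ is a subgroup. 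Identifying $A' = \bigoplus_{g \in L} \delta_g$ with multiplication $\delta_g \otimes \delta_{g'} \mapsto \psi(g,g') \delta_{gg'}$, associativity of $A'$ computed against the associator $\omega$ of $\vecgw$ translates precisely into $d\psi = \omega|_{L \times L \times L}$, so $A' \cong A(L,\psi)$.

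For (a), one direction constructs an explicit Morita equivalence from the conjugacy data $(x,\gamma)$: given $x \in G$ with $L = {}^x L'$ and $\gamma \in C^1(L',\kk^\times)$ trivializing $\psi'^{-1}\psi^x\Omega_x|_{L' \times L'}$, the object $M := \bigoplus_{g \in Lx} \delta_g$ carries a natural $A(L,\psi)$-$A(L',\psi')$-bimodule structure whose left and right actions are respectively twisted by $\psi$, $\psi'$, the restriction of $\omega$, and $\gamma$. Verifying that $M$ together with a dual bimodule satisfies the hypothesis of Proposition~\ref{prop:Morita-bimod}(a) in $\vecgw$ yields the Morita equivalence. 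For the converse, any equivalence arises from an invertible bimodule $M \in {}_{A(L,\psi)}(\vecgw)_{A(L',\psi')}$; decomposing $M$ by its $G$-grading and using the transitivity of the left $L$-action on the grading support produces an element $x \in G$ with $L = {}^x L'$, and comparing the left and right action axioms of $M$ against the $\omega$-twisted associator extracts the 1-cochain $\gamma$ trivializing $\psi'^{-1}\psi^x\Omega_x|_{L'}$.

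The main obstacle will be the cocycle bookkeeping in (a): the 2-cocycle $\Omega_x$ arises as the obstruction measuring the failure of conjugation by $x$ to intertwine $\omega|_L$ and $\omega|_{L'}$, and verifying both that $\Omega_x$ is indeed a 2-cocycle on $L'$ and that the bimodule axioms for $M$ close up correctly requires a careful application of the 3-cocycle identity \eqref{eq:omega} for $\omega$ to the triples arising from left $L$-actions, middle $x$-translations, and right $L'$-actions. This matching of associators between the two sides of the bimodule is dictated by the structure of the argument, but the scalar identities one must verify are the most computational step of the proof.
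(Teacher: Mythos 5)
The paper does not actually prove this statement: Theorem~\ref{thm:OstNat-new} is imported verbatim from the cited references (Ostrik's Example~2.1, EGNO Example~9.7.2, and Natale's paper on module categories over $\vecgw$), which is why it carries a \textnormal{\texttt{\char`\\qed}} and no proof environment. So there is no in-paper argument to compare against; your proposal is essentially a reconstruction of the standard proofs from those sources, and its overall architecture (Ostrik's correspondence between indecomposable semisimple module categories and Morita classes of indecomposable separable algebras, reduction to a connected internal End, reading off $(L,\psi)$ from the $G$-grading, and the conjugation/$\Omega_x$ criterion for when two such algebras are Morita equivalent) is the correct and standard one.

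Two points in your sketch are thinner than they should be. First, in part (b) you claim that non-degeneracy of the pairing $A'_g \otimes A'_{g^{-1}} \to A'_e$ forces $\dim A'_g = 1$; non-degeneracy only gives $\dim A'_g = \dim A'_{g^{-1}}$ and the closure of the support under inversion. The multiplicity-one statement really comes from the internal-End description you already invoke: $A' = \underline{\mathrm{End}}(M)$ decomposes with multiplicity spaces $\Hom(\delta_g \otimes M, M)$, and since $\delta_g$ is invertible, $\delta_g \otimes M$ is simple whenever $M$ is, so each such Hom space has dimension at most one by Schur. You should route the argument through that observation rather than through the pairing. Second, in part (a) the entire content of the theorem (and of Natale's contribution) lives in the cocycle bookkeeping you defer: that the bimodule $\bigoplus_{g \in Lx}\delta_g$ closes up into an invertible $A(L,\psi)$--$A(L',\psi')$ bimodule precisely when the class of $\psi'^{-1}\psi^x\Omega_x|_{L'\times L'}$ is trivial, and conversely. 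Acknowledging this as ``the most computational step'' is honest, but as written the proposal establishes the shape of the equivalence criterion without verifying it; for a self-contained proof those identities, via repeated use of \eqref{eq:omega} and \eqref{eq:theta}, would have to be carried out. Given that the paper itself treats the result as known, citing the references (as the paper does) is the intended resolution of both gaps.
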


This brings us to the main result of this section, and of this article.

\begin{theorem} \label{thm:modtha-new}
We have the following statements.
\begin{enumerate}
\item $\M^{K,\beta}(L, \psi)$ and $\M^{K,\beta}(L', \psi')$ are equivalent as $\C(G,\omega,K,\beta)$-module categories if and only if $(L,\psi) = (L', \psi')$ in $\mathcal{P}(G,\omega)$.

\smallskip

\item Every indecomposable left module category  over  $\mathcal{C}(G,\omega,K,\beta)$ is equivalent to one of the form $\M^{K,\beta}(L,\psi)$, as left $\mathcal{C}(G,\omega,K,\beta)$-module categories. 
\end{enumerate}
\end{theorem}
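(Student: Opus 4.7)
\emph{Plan.} I reduce both parts to the classification of indecomposable semisimple $\vecgw$-module categories (Theorem~\ref{thm:OstNat-new}) by combining Theorem~\ref{thm:Morita-updown} (applied with $\C = \vecgw$ and $A = A(K,\beta)$, which is special Frobenius by Proposition~\ref{prop:ALpsi-intro}) with the correspondence (Definition~\ref{def:Moritaequiv}) between Morita equivalence of algebras and equivalence of their module categories.

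Part (a) is then a chain of equivalences. By Definition~\ref{def:Moritaequiv}, $\M^{K,\beta}(L,\psi) \simeq \M^{K,\beta}(L',\psi')$ as $\C(G,\omega,K,\beta)$-module categories iff $A^{K,\beta}(L,\psi) = \Phi(A(L,\psi))$ and $A^{K,\beta}(L',\psi') = \Phi(A(L',\psi'))$ are Morita equivalent in $\C(G,\omega,K,\beta) = {}_{A(K,\beta)}\vecgw_{A(K,\beta)}$. Theorem~\ref{thm:Morita-updown} translates this to Morita equivalence of $A(L,\psi)$ and $A(L',\psi')$ in $\vecgw$; Definition~\ref{def:Moritaequiv} rephrases this as $\M(L,\psi) \simeq \M(L',\psi')$ over $\vecgw$; and Theorem~\ref{thm:OstNat-new}(a) then identifies this with $(L,\psi) = (L',\psi')$ in $\mathcal{P}(G,\omega)$.

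For part (b), I first invoke Ostrik's general classification \cite[Section~3.3]{OstrikTG2003}: any indecomposable semisimple $\C(G,\omega,K,\beta)$-module category $\M$ has the form $\C(G,\omega,K,\beta)_B$ for an indecomposable separable algebra $B$, with equivalence of module categories corresponding to Morita equivalence of the underlying algebras. By Proposition~\ref{prop: AKL props}, each $A^{K,\beta}(L,\psi)$ is itself indecomposable and separable, so it remains to show that every $B$ as above is Morita equivalent to some $A^{K,\beta}(L,\psi)$. Theorem~\ref{thm:OstNat-intro} together with Proposition~\ref{prop:ALpsi-intro} gives that the Morita classes of indecomposable separable algebras in $\vecgw$ are exactly $\{[A(L,\psi)] : (L,\psi) \in \mathcal{P}(G,\omega)\}$, and Theorem~\ref{thm:Morita-updown} then injects these via $\Phi$ into the Morita classes of indecomposable separable algebras in $\C(G,\omega,K,\beta)$. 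To upgrade this injection to a surjection I will use the well-known categorical Morita equivalence between $\vecgw$ and its dual ${}_{A(K,\beta)}\vecgw_{A(K,\beta)} = \C(G,\omega,K,\beta)$ (see, e.g., \cite[Section~7.12]{EGNO}), which in combination with Ostrik's theorem produces a bijection between the two Morita class sets; since both are already parametrised by $\mathcal{P}(G,\omega)$, the injection must be a bijection, and the desired equivalence $\M \simeq \M^{K,\beta}(L,\psi)$ follows.

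The main obstacle is exactly this surjectivity step: Theorem~\ref{thm:Morita-updown} alone only yields injectivity. Either one invokes the categorical Morita equivalence as above, or one constructs, for each indecomposable separable $B \in \C(G,\omega,K,\beta)$, an explicit algebra in $\vecgw$ whose image under $\Phi$ is Morita equivalent to $B$. The algebra $(A(K,\beta) \otimes B) \otimes A(K,\beta)$ from Lemma~\ref{lem:Morita-updown} is a natural candidate for such a descent, though isolating a representative of the form $A(L,\psi)$ within its Morita class would require additional combinatorial input.
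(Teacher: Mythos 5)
Your proposal is correct and follows essentially the same route as the paper: part (a) is the chain $\M^{K,\beta}(L,\psi)\simeq\M^{K,\beta}(L',\psi')$ $\Leftrightarrow$ Morita equivalence of the twisted Hecke algebras $\Leftrightarrow$ (via Theorem~\ref{thm:Morita-updown} with $A=A(K,\beta)$ special Frobenius) Morita equivalence of $A(L,\psi)$ and $A(L',\psi')$ $\Leftrightarrow$ Theorem~\ref{thm:OstNat-new}(a); and part (b) resolves surjectivity exactly as the paper does, by citing the categorical Morita equivalence of $\vecgw$ with $\C(G,\omega,K,\beta)$ (M\"uger, \cite[Theorem~7.12.11]{EGNO}) to equate the cardinalities of the two sets of indecomposable semisimple module categories with $|\mathcal{P}(G,\omega)|$ and then applying the finite pigeonhole argument to the injection furnished by part (a).
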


\begin{proof}
(a) By Theorem~\ref{thm:OstNat-new}, we need to show that $\M(L, \psi)$ and $\M(L', \psi')$ are equivalent as ${\sf Vec}_G^\omega$-module categories if and only if $\M^{K,\beta}(L, \psi)$ and $\M^{K,\beta}(L', \psi')$ are equivalent as $\C(G,\omega,K,\beta)$-module categories. But this holds by using Theorem~\ref{thm:Morita-updown}, with Propositions~\ref{prop:ALpsi} and~\ref{prop:Alpsi props}(a,c), applied to $\C = \vecgw$, $A = A(K,\beta)$, $B = A(L,\psi)$, and $B' = A(L', \psi')$. 

\smallskip

(b) For a fusion category $\mathcal{D}$, let ${\sf Indec}({\sf Mod}(\mathcal{D}))$ denote a set of equivalence class representatives of indecomposable left $\mathcal{D}$-module categories, and let $[\mathcal{M}]$ be the class of $\mathcal{D}$-module categories equivalent to $\mathcal{M}$ (as left $\mathcal{D}$-module categories).

Now by Theorem~\ref{thm:OstNat-new} and \cite[Sections~3 and~4]{MugerI} (see also \cite[Theorem~7.12.11]{EGNO}), there is a 1-to-1 correspondence between the finite sets, 
$${\sf Indec}({\sf Mod}(\mathcal{C}(G,\omega,K,\beta))) \quad \quad \text{ and } \quad \quad \mathcal{P}(G,\omega);$$
namely, both of these sets are in bijection with ${\sf Indec}({\sf Mod}(\vecgw))$. 
On the other hand, since  $A^{K,\beta}(L,\psi)$ is an indecomposable and separable algebra in $\mathcal{C}(G,\omega,K,\beta)$ [Proposition~\ref{prop: AKL props}], the finite collection $$\{[\mathcal{M}^{K,\beta}(L,\psi)]\}_{(L,\psi) \in \mathcal{P}(G,\omega)}$$ consists of equivalence classes of indecomposable  left $\mathcal{C}(G,\omega,K,\beta)$-module categories [Proposition~\ref{prop:indssCA}]. (Indeed, indecomposability is preserved under module category equivalence.) Moreover, by (a), this collection  is also in bijection with the finite set $\mathcal{P}(G,\omega)$. Therefore, as finite sets, $${\sf Indec}({\sf Mod}(\mathcal{C}(G,\omega,K,\beta))) = \{[\mathcal{M}^{K,\beta}(L,\psi)]\}_{(L,\psi) \in \mathcal{P}(G,\omega)},$$ and this verifies part~(b).
\end{proof}

Finally, we compare our work with recent work of P. Etingof, R. Kinser, and the last author in \cite{EKW}.

\begin{remark} \label{rem:EKW}
Morita equivalence class representatives of indecomposable, separable algebras in group-theoretical fusion categories $\C$  were used in the recent study of tensor algebras in $\C$; see \cite[Theorem~3.11 and Section~5]{EKW}.  (Note that a `separable algebra' here is the same as a `semisimple algebra' in \cite{EKW}  as we are working over an algebraically closed field.) Now by Theorem~\ref{thm:modtha-new}, our construction of the twisted Hecke algebras in $\C$ serve  as the base algebras of tensor algebras in $\C$, up to the notion of equivalence given in \cite[Definition~3.4]{EKW}.
\end{remark}

\begin{example} \label{ex:EKW}
Continuing the remark above, let ${\sf Rep}(H_8)$ be the category of finite-dimensional representations of the Kac-Paljutkin Hopf algebra, which is a group-theoretical fusion category $\C(D_8, \omega, \mathbb{Z}_2, 1)$; see \cite[Example~5.3 and Section~5.3]{EKW} for more details. A collection of Morita equivalence class representatives of indecomposable, separable algebras (or, up to equivalence, of base algebras of the tensor algebras) in ${\sf Rep}(H_8)$ is given in \cite[Theorem~5.23]{EKW}. The correspondence of those six algebras with the conjugacy classes of pairs $(L,\psi)$ is presented in \cite[Proposition~5.26]{EKW}. Thus, we can replace the algebras in \cite[Theorem~5.23]{EKW} corresponding to such pairs $(L,\psi)$ with the twisted Hecke algebras $A^{\mathbb{Z}_2, 1}(L,\psi)$ featured here. The advantage is that the six algebras of \cite[Theorem~5.23]{EKW} were found via ad-hoc methods \cite[Remark~5.28]{EKW}, whereas our construction provides a uniform collection of Morita equivalence classes representatives of algebras in ${\sf Rep}(H_8)$.
\end{example}


\section*{Acknowledgements.} The authors thank C{\'e}sar Galindo, Ryan Kinser, Victor Ostrik, and Harshit Yadav for insightful comments on a preliminary version of this article. We especially thank the anonymous referee for their detailed comments, which greatly improved the quality of our manuscript. This work began at the Women in Noncommutative Algebra and Representation Theory (WINART2) workshop, held at the University of Leeds in May 2019. We thank
the University of Leeds' administration and staff for their hospitality and productive atmosphere. 

Y. Morales was partially supported by the London Mathematical Society, workshop grant \#WS-1718-03. M. M\"uller was partially supported by  London Mathematical Society, workshop grant \#WS-1718-03 and by Universidade Federal de Vi\c cosa - Campus Florestal. J. Plavnik gratefully acknowledges the support of Indiana University, Bloomington, through a Provost's Travel Award for Women in Science. A. Ros Camacho was supported by the NWO Veni grant 639.031.758, Utrecht University and Cardiff University. A. Tabiri was supported by the Schlumberger Foundation Faculty for the Future Fellowship, AIMS-Google AI Postdoctoral Fellowship and AIMS-Ghana. C. Walton was supported by a research fellowship from the Alfred P. Sloan foundation. J. Plavnik and C. Walton were also supported by the U.S. NSF with research grants  DMS-1802503/1917319, and DMS-1903192/2100756, respectively.

\medskip

\noindent {\it Data Availability:} Data sharing is not applicable to this article as no datasets were generated or analyzed during the current study.

\medskip

\noindent {\it Ethical Statement/Conflict of Interest:} There are no conflicts of interest for this work.


\appendix

\section{Remainder of the proof of Theorem~\ref{thm:Moritapres}} \label{sec:MoritaPhi-appendix}

In this appendix, we fill in some details for the proof of Theorem~\ref{thm:Moritapres}.

\begin{proposition} \label{prop:MoritaPhi-1}
We have that  
\[
\begin{array}{c}
(P, \lambda_{P}^{\Gamma(S)}, \;  \rho_{P}^{\Gamma(S')}) \in {}_{\Gamma(S)}\mathcal{T}_{\Gamma(S')}, \quad  \quad (Q, \lambda_{Q}^{\Gamma(S')}, \;  \rho_{Q}^{\Gamma(S)}) \in {}_{\Gamma(S')}\mathcal{T}_{\Gamma(S)}, \quad \text{ where}\\\\
\hspace{-.05in}{\small
\begin{array}{ll}
\medskip
\lambda_{P}^{\Gamma(S)}= \Gamma(\lambda_{\overline{P}}^S) \; \Gamma_{S,\overline{P}}: \Gamma(S) \otimes_{\mathcal{T}} P \to P, &
\rho_{P}^{\Gamma(S')}= \Gamma(\rho_{\overline{P}}^{S'}) \; \Gamma_{\overline{P},S'}:  P \otimes_{\mathcal{T}} \Gamma(S') \to P,\\
\lambda_{Q}^{\Gamma(S')}= \Gamma(\lambda_{\overline{Q}}^{S'}) \; \Gamma_{S',\overline{Q}}: \Gamma(S') \otimes_{\mathcal{T}} Q \to Q, &
\rho_{Q}^{\Gamma(S)}= \Gamma(\rho_{\overline{Q}}^{S}) \; \Gamma_{\overline{Q},S}:  Q \otimes_{\mathcal{T}} \Gamma(S) \to Q.
\end{array}
}
\end{array}
\]
\end{proposition}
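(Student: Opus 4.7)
The strategy is to leverage the monoidal structure of $\Gamma$ to transport the bimodule axioms from $\overline{P}, \overline{Q}$ in $\mathcal{S}$ to $P, Q$ in $\mathcal{T}$. This is an extension of Proposition~\ref{prp:monfunctor}(a) from algebras to bimodules; indeed, if we think of a left $S$-module structure $\lambda_{\overline{P}}^S: S \otimes_\mathcal{S} \overline{P} \to \overline{P}$ as an action map, then $\Gamma(\lambda_{\overline{P}}^S) \circ \Gamma_{S, \overline{P}}$ is the natural candidate for an action of $\Gamma(S)$ on $\Gamma(\overline{P})$ in $\mathcal{T}$.

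The plan is to verify each of the required axioms --- left action associativity/unitality, right action associativity/unitality, and left-right compatibility --- one at a time. By symmetry, it suffices to establish the four axioms for $(P, \lambda_P^{\Gamma(S)}, \rho_P^{\Gamma(S')})$; the four for $Q$ follow identically with the roles of $S, S', \overline{P}$ replaced by $S', S, \overline{Q}$. For the left action associativity, I would show that the diagram
\[
\xymatrix@C-.2pc{
(\Gamma(S) \otimes_\mathcal{T} \Gamma(S)) \otimes_\mathcal{T} P \ar[r] \ar[d] & \Gamma(S) \otimes_\mathcal{T} (\Gamma(S) \otimes_\mathcal{T} P) \ar[d] \\
\Gamma(S) \otimes_\mathcal{T} P \ar[r] & P
}
\]
commutes by pasting together (i) an outer rectangle obtained by applying $\Gamma$ to the left-action associativity diagram for $\overline{P}$ over $S$ in $\mathcal{S}$, (ii) naturality squares for $\Gamma_{*,*}$, and (iii) the monoidal-functor associativity constraint relating $\Gamma_{S,S} \otimes_\mathcal{T} \id$ and $\id \otimes_\mathcal{T} \Gamma_{S, \overline{P}}$ with $\Gamma_{S \otimes S, \overline{P}}$ and $\Gamma_{S, S \otimes \overline{P}}$. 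Unitality of the left action similarly follows from the unitality coherence of $\Gamma$ (relating $\Gamma_{\unit, \overline{P}}$ to $\Gamma_0$) combined with the unit axiom $\lambda_{\overline{P}}^S (u_S \otimes \id_{\overline{P}}) = l_{\overline{P}}$.

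The same type of diagram chase yields right-$\Gamma(S')$ unitality and associativity on $P$. The bimodule compatibility
\[
\lambda_P^{\Gamma(S)}(\id_{\Gamma(S)} \otimes_\mathcal{T} \rho_P^{\Gamma(S')})\alpha_{\Gamma(S), P, \Gamma(S')} = \rho_P^{\Gamma(S')}(\lambda_P^{\Gamma(S)} \otimes_\mathcal{T} \id_{\Gamma(S')})
\]
is established by expanding both sides into diagrams whose outermost regions are $\Gamma$ applied to the bimodule compatibility for $\overline{P}$, glued along naturality squares of $\Gamma_{*,*}$ and one instance of the monoidal-functor associativity coherence.

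The main obstacle is notational rather than conceptual: all four axioms (left module, right module, bimodule compatibility, and the unitality triangles) require carefully juggling instances of $\Gamma_{*,*}$ and the associators $\alpha$ of $\mathcal{T}$, and keeping track of where the monoidal coherences of $\Gamma$ are invoked. There is no nontrivial mathematical content beyond Proposition~\ref{prp:monfunctor}(a); the whole proof amounts to observing that the algebra axioms used there are special cases of module axioms, so the same diagrammatic argument applies. In practice I would simply remark that the result follows by applying the monoidal structure of $\Gamma$ to the bimodule structure of $\overline{P}$ and $\overline{Q}$, and defer the routine diagrams to a remark or leave them to the reader.
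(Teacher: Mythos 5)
Your proposal is correct and follows essentially the same route as the paper: the paper likewise transports the module axioms along the monoidal structure of $\Gamma$, declares the left- and right-module axioms straightforward, and verifies only the bimodule compatibility explicitly via a diagram pasted from the associativity coherence of $\Gamma$, the naturality of $\Gamma_{*,*}$, the bimodule compatibility of $\overline{P}$ in $\mathcal{S}$, and the definitions of the transported actions. Your additional sketch of the left-action associativity and unitality (invoking the coherences with $\Gamma_{*,*}$ and $\Gamma_0$) fills in exactly the parts the paper leaves to the reader.
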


\begin{proof}
It is straight-forward to check that $P$ is a right $\Gamma(S')$-module in $\mathcal{T}$ with action given by $\rho_P^{\Gamma(S')}$.
In a similar way, it can be seen that $P$ is a left $\Gamma(S)$-module in $\mathcal{T}$ with action $\lambda_P^{\Gamma(S)}$. 
Let us now check the left and right action compatibility for $P$. Consider the diagram, where  $\otimes:= \otimes_{\mathcal{S}}$ and we suppress the $\otimes_*$ symbols in morphisms below.
{\tiny
\[
\hspace{-.1in}
\xymatrix@C-1.5pc@R-.3pc{
(\Gamma(S)\otimes_{\mathcal{T}} P)\otimes_{\mathcal{T}}\Gamma(S')\ar[rrrr]^{\alpha_{\Gamma(S),P,\Gamma(S')}}\ar[ddd]_{\lambda_{P}^{\Gamma(S)} \id}\ar[dr]_{\Gamma_{S,\overline{P}} \id}&&&& \Gamma(S)\otimes_{\mathcal{T}} (P\otimes_{\mathcal{T}} \Gamma(S'))\ar[ddd]^{\id\; \rho_P^{\Gamma(S')}} \ar[ld]^{\id\; \Gamma_{\overline{P},S'}}\\
&\Gamma(S\otimes \overline{P})\otimes_{\mathcal{T}}\Gamma(S')\ar[ddl]_{\Gamma(\lambda_{\overline{P}}^S) \id \hspace{-.1in}}\ar[d]^{\Gamma_{S\otimes\overline{P},S'}} &(1)&\Gamma(S)\otimes_{\mathcal{T}}\Gamma(\overline{P}\otimes S')\ar[ddr]^{\hspace{-.1in}\id\;\Gamma(\rho_{\overline{P}}^{S'})} \ar[d]_{\Gamma_{S,\overline{P} \otimes S}}&\\
&\Gamma((S\otimes \overline{P})\otimes S')\ar[d]^{\Gamma(\lambda_{\overline{P}}^{S}\; \id)}_{(3)\hspace{.3in}}\ar[rr]_{\Gamma(\alpha_{S,\overline{P},S'})}&&\Gamma(S\otimes (\overline{P}\otimes S'))\ar[d]_{\Gamma(\id\;\rho_{\overline{P}}^{S'})}^{\hspace{.3in}(4)}&\\
P\otimes_{\mathcal{T}}\Gamma(S')\ar[drr]_{\rho_{P}^{\Gamma(S')}}\ar[r]^{\Gamma_{\overline{P},S'}}&\Gamma(\overline{P}\otimes S')\ar[dr]^(.55){\Gamma(\rho_{\overline{P}}^{S'})}&(2)&\Gamma(S\otimes \overline{P})\ar[dl]_(.55){\Gamma(\lambda_{\overline{P}}^S)}&\Gamma(S) \otimes_{\mathcal{T}} P \ar[dll]^{\lambda_{P}^{\Gamma(S)}} \ar[l]_{\Gamma_{S,\overline{P}}}\\
&&P&&
}
\]
}

\noindent Here, $(1)$ commutes as $\Gamma$ is a monoidal functor, and $(2)$ commutes since $\overline{P}~\in~ {}_S\C_{S'}$. The diagrams $(3)$ and $(4)$ commute due to the naturality of $\Gamma_{*,*}$,
and the triangles correspond to the definition of the left and right actions of $P$ in $ {}_{\Gamma(S)}\mathcal{T}_{\Gamma(S')}$.
Therefore, $(P, \lambda_{P}^{\Gamma(S)}, \;  \rho_{P}^{\Gamma(S')}) \in {}_{\Gamma(S)}\mathcal{T}_{\Gamma(S')} $.
Analogously, $(Q, \lambda_{Q}^{\Gamma(S')}, \;  \rho_{Q}^{\Gamma(S)}) \in {}_{\Gamma(S')}\mathcal{T}_{\Gamma(S)} $.
\end{proof}

\begin{proposition} \label{prop:MoritaPhi-2}
The epimorphisms
\[
\begin{array}{ll}
\medskip
\tau: P \otimes_{\Gamma(S')} Q \twoheadrightarrow \Gamma(S) &\in {}_{\Gamma(S)}\mathcal{T}_{\Gamma(S)},\\
\mu: Q \otimes_{\Gamma(S)} P \twoheadrightarrow \Gamma(S') &\in {}_{\Gamma(S')}\mathcal{T}_{\Gamma(S')},
\end{array}
\]
satisfy diagrams $(\ast)$ and $(\ast \ast)$ in Proposition~\ref{prop:Morita-bimod}(b).
\end{proposition}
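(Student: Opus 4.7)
The plan is to verify diagram $(\ast)$ for $\tau$ and $\mu$ in detail; diagram $(\ast\ast)$ will then follow by the evident symmetry interchanging the roles $S \leftrightarrow S'$, $P \leftrightarrow Q$, $\tau \leftrightarrow \mu$.

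First, I would establish that the analogous diagrams $(\ast)$ and $(\ast\ast)$ hold in $\mathcal{S}$ for $\overline{\tau}$ and $\overline{\mu}$. Although Proposition~\ref{prop:Morita-bimod}(a) only provides the existence of bimodule isomorphisms $\overline{\tau}$ and $\overline{\mu}$, it is standard in the monoidal Morita theory of \cite{FRS-2} that one may arrange $\overline{\tau}, \overline{\mu}$ to form a \emph{Morita context}, i.e., to satisfy the pentagons $(\ast)$ and $(\ast\ast)$ in $\mathcal{S}$, adjusting $\overline{\mu}$ by an automorphism of $S'$ if necessary. I take this as the starting point.

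Next, I would use that $\pi^{\Gamma(S')}_{P,Q}$, $\pi^{\Gamma(S)}_{P \otimes_{\Gamma(S')} Q,\, P}$, and $\Gamma_{\overline{P},\overline{Q}}$ (epic by the hypothesis on $\Gamma$) are all epimorphisms, to reduce the commutativity of $(\ast)$ to an equality of morphisms with domain $\Gamma(\overline{P} \otimes_\mathcal{S} \overline{Q} \otimes_\mathcal{S} \overline{P})$, obtained from each side by precomposing with the composite of these epimorphisms (together with a further application of $\Gamma_{\overline{P} \otimes \overline{Q},\, \overline{P}}$ and of the monoidal associator in $\mathcal{T}$). Using formulas \eqref{Tau hat - Mu hat} to rewrite $\tau,\mu$ in terms of $\widehat{\tau}, \widehat{\mu}$, and then unfolding the actions $\lambda^{\Gamma(S)}_P, \rho^{\Gamma(S')}_P$ (Proposition~\ref{prop:MoritaPhi-1}), the isomorphism $\overline{\alpha}$ (Definition~\ref{def:alphabar}), and the unitors $l^{\Gamma(S)}_P, r^{\Gamma(S')}_P$ (Proposition~\ref{prop:ACAmonoidal}), both sides of $(\ast)$ become compositions of $\Gamma(\overline{\tau})$ or $\Gamma(\overline{\mu})$ with $\Gamma$ applied to the projections $\pi^{S'}_{\overline{P},\overline{Q}}, \pi^{S}_{\overline{Q},\overline{P}}$ and to the $\mathcal{S}$-level unitors and associator, all threaded with the coherence maps $\Gamma_{*,*}$ of $\Gamma$. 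At this point I would apply $\Gamma$ to the $\mathcal{S}$-level diagram $(\ast)$ for $\overline{\tau}, \overline{\mu}$ (from the first step) to identify the two resulting expressions.

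The main obstacle will be the bookkeeping: because $\Gamma$ is only lax monoidal, each traversal across a relative tensor product $\otimes_{\Gamma(S)}$ or $\otimes_{\Gamma(S')}$ introduces additional structure morphisms $\Gamma_{*,*}$ and projections $\pi^{\Gamma(S)}_{*,*}, \pi^{\Gamma(S')}_{*,*}$, and these must be carefully shuffled through the diagram to land in positions where the lifted $\mathcal{S}$-level identity is directly applicable. I would organize the argument as a single large commutative diagram whose regions commute by (i) the monoidal coherence for $\Gamma$, (ii) naturality of $\Gamma_{*,*}$ and of the associator $\alpha$, (iii) the universal properties defining $\pi^{\Gamma(S)}_{*,*}, \pi^{\Gamma(S')}_{*,*}$, and $\overline{\alpha}$, and (iv) the Morita context identities for $\overline{\tau}, \overline{\mu}$ from the first step.
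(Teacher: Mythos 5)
Your proposal is correct and follows essentially the same route as the paper: the paper's proof is precisely the single large commutative diagram you describe, whose outer pentagon is $(\ast)$ in $\mathcal{T}$ and whose interior regions commute by the monoidal coherence and naturality of $\Gamma_{*,*}$, the universal properties of the projections $\pi$ and of $\overline{\alpha}$, the identities \eqref{Tau hat - Mu hat}, and --- as its central region --- the image under $\Gamma$ of the $\mathcal{S}$-level diagram $(\ast)$ for $\overline{\tau},\overline{\mu}$. Your preliminary step of normalizing $\overline{\tau},\overline{\mu}$ into a genuine Morita context (so that the $\mathcal{S}$-level pentagons actually hold, which Proposition~\ref{prop:Morita-bimod}(a) alone does not guarantee) is left tacit in the paper, so making it explicit is a minor strengthening rather than a different argument.
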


\begin{proof}
Diagram $(\ast)$ corresponds to the following; $\otimes$ is understood from context:
\begin{center}
\[
\resizebox{\displaywidth}{!}{
\xymatrix@C-2.5pc{
\left[P \otimes_{\Gamma(S')} Q \right] \otimes_{\Gamma(S)} P 
\ar@/_7pc/[dddddddd]^(.32){\tau \otimes_{\Gamma \left( S \right)} \id_P} 
\ar @{}[ddddddd]|{(7')\hspace{1.8in}} \ar[rrrrrr]^{\overline{\alpha}_{P,Q,P}} &&&&&& P \otimes_{\Gamma(S')} \left[ Q  \otimes_{\Gamma(S)} P \right]  
\ar@/^7pc/[dddddddd]_(.32){\id_P \otimes_{\Gamma \left( S' \right)} \mu}  
\ar @{}[ddddddd]|{\hspace{1.8in}(7)}
\\
& \left( P \otimes Q \right) \otimes_{\Gamma(S)} P \ar @{}[dl]|{(5')} \ar[ul]_(.4){\pi^{\Gamma(S')}_{P,Q}\otimes_{\Gamma(S)}\id} && {(1)} &&  P \otimes_{\Gamma(S')} \left( Q  \otimes P \right) \ar[ur]^(.4){\id\otimes_{\Gamma(S')}\pi^{\Gamma(S)}_{Q,P}} \ar @{}[dr]|{(5)}& \\
 \left( P \otimes_{\Gamma(S')} Q \right) \otimes P \ar[uu]_{\pi^{\Gamma(S)}_{PQ,P}} \ar@/_4pc/[dddd]^{\tau \otimes \id_P} && \left( P \otimes Q \right) \otimes P \ar[ul]_{\pi^{\Gamma(S)}_{PQ,P}} \ar[rr]^{\alpha_{P,Q,P}} \ar[dl]^{\Gamma_{\overline{P},\overline{Q}}\otimes\id} \ar[ll]_(.45){\pi^{\Gamma(S')}_{P,Q}\otimes\id} && P \otimes \left( Q \otimes P \right) \ar[rr]^(.45){\id\otimes \pi^{\Gamma(S)}_{Q,P}} \ar[ur]^{\pi^{\Gamma(S')}_{P,QP}} \ar[dr]_{\id\otimes \Gamma_{\overline{Q},\overline{P}}} && P \otimes \left( Q  \otimes_{\Gamma(S)} P \right) \ar[uu]^{\pi^{\Gamma(S')}_{P,QP}} \ar@/^4pc/[dddd]_{\id_P \otimes \mu}  \\
  (6') & \Gamma \left( \overline{P} \otimes \overline{Q} \right) \otimes P \ar[ddl]_{\Gamma(\pi^{S'}_{\overline{P},\overline{Q}})\otimes \id}\ar[dr]^{\Gamma_{\overline{PQ},\overline{P}}} && (2) && P \otimes \Gamma \left( \overline{Q} \otimes \overline{P} \right) \ar[ddr]^{\id\otimes\Gamma(\pi^S_{\overline{Q},\overline{P}})}  \ar[dl]_{\Gamma_{\overline{P},\overline{QP}}} & (6)  \\
 &(8')& \Gamma \left( \left[ \overline{P} \otimes \overline{Q} \right] \otimes \overline{P} \right) \ar@/^2pc/[rr]^{\Gamma(\alpha_{\overline{P},\overline{Q},\overline{P}})} \ar[d]^{\Gamma(\pi^{S'}_{\overline{P},\overline{Q}}\otimes\id)} && \Gamma \left( \overline{P} \otimes  \left[ \overline{Q} \otimes \overline{P} \right] \right) \ar[d]_{\Gamma(\id\otimes\pi^{S}_{\overline{Q},\overline{P}})} &(8)&   \\
 \Gamma \left( \overline{P} \otimes_{S'} \overline{Q} \right) \otimes P \ar[rr]_(.48){\Gamma_{\overline{PQ},\overline{P}}}\ar[d]^{\Gamma(\overline{\tau})\otimes\id} && \Gamma \left( \left[ \overline{P} \otimes_{S'} \overline{Q} \right] \otimes \overline{P} \right)  \ar[dl]_{\Gamma(\overline{\tau}\otimes\id)} \ar @{}[ddl]|{(10') \hspace{.27in}} \ar[d]^{\Gamma(\pi^{S}_{\overline{PQ},\overline{P}})} \ar @{}[dll]|{(9')\hspace{.8in}} & (3) & \Gamma \left( \overline{P} \otimes \left[ \overline{Q}  \otimes_S \overline{P} \right] \right) \ar @{}[ddr]|{\hspace{.27in}(10)} \ar @{}[drr]|{\hspace{.8in}(9)}  \ar[d]_{\Gamma(\pi^{S'}_{\overline{P},\overline{QP}})}  \ar[dr]^{\Gamma(\id\otimes\overline{\mu})} && P \otimes \Gamma \left( \overline{Q} \otimes_S \overline{P} \right) \ar[d]_{\id \otimes\Gamma(\overline{\mu})}\ar[ll]^(.45){\Gamma_{\overline{P},\overline{QP}}}  \\
\Gamma \left( S \right) \otimes P \ar[dd]_(.4){\pi^{\Gamma(S)}_{\Gamma(S),P}}\ar[r]_{\Gamma_{S,\overline{P}}}  & \Gamma \left( S \otimes \overline{P} \right)  \ar[d]_{\Gamma(\pi^{S}_{S,P})}  
&\; \Gamma \left( \left[ \overline{P} \otimes_{S'} \overline{Q} \right] \otimes_S \overline{P} \right) \ar@/_2pc/[rr]_{\Gamma(\overline{\alpha}_{\overline{P},\overline{Q},\overline{P}})} \ar[dl]^{\Gamma(\overline{\tau}\otimes_{S}\id)} && \Gamma \left( \overline{P} \otimes_{S'} \left[ \overline{Q} \otimes_S \overline{P} \right] \right) \ar[dr]_{\Gamma(\id\otimes_{S'}\overline{\mu})} & \Gamma \left( \overline{P} \otimes S' \right)
\ar[d]^{\Gamma(\pi^{S'}_{\overline{P},S'})}  & P \otimes \Gamma \left( S' \right) \ar[dd]^(.4){\pi^{\Gamma(S')}_{P,\Gamma(S')}} \ar[l]_{\Gamma_{\overline{P},S'}} \\
  & \Gamma \left( S \otimes_S \overline{P} \right) \ar@{}[dl]|{(11')}\ar[drr]^{\Gamma(l_{\overline{P}}^{S})} && (4) && \Gamma \left( \overline{P} \otimes_{S'} S' \right)\ar@{}[dr]|{(11)} \ar[dll]_{\Gamma(r_{\overline{P}}^{S'})} &   \\
\Gamma \left( S \right) \otimes_{\Gamma \left( S \right)} P
\ar[rrr]_{l_P^{\Gamma(S)}} &&& P &&& P \otimes_{\Gamma \left( S' \right)} \Gamma \left( S' \right) \ar[lll]^{r_P^{\Gamma(S')}}
\\
}
}
\]

\end{center}
Diagram~(1) is the definition of $\overline{\alpha}$ (see Definition~\ref{def:alphabar}). Diagram~(2) commutes as $\Gamma$ is a monoidal  functor, and~(3) results from applying $\Gamma$ to the definition of  $\overline{\alpha}$. Diagram~(4) is the result of applying the functor $\Gamma$ to the diagram (*). Diagrams~(5) and~(7) follow from~\eqref{eq:tenAmap}. Diagram~(6) is~\eqref{Tau hat - Mu hat}. Diagrams~(8) and~(9) commute from naturality of  $\Gamma_{*,*}$. Diagram~(10) commutes  by applying $\Gamma$ to~\eqref{eq:tenAmap}. The proof of diagram~(11) is given below.  Finally, the commutativity of~($5^{\prime}$)--($11^{\prime}$) follow analogously to the proof of~(5)--(11), respectively. Therefore, diagram ($\ast$) commutes. In an analogous manner, diagram ($\ast \ast$) commutes. 

\[
{\footnotesize
\xymatrix@C-2pc{
\Gamma(\overline{P}) \otimes_{\mathcal{T}} \Gamma(S') 
\ar[rr]^{\Gamma_{\overline{P},S'}}
\ar[d]_{\pi_{\Gamma(\overline{P}),\Gamma(S')}^{\Gamma(S')}}
\ar@/^1.7pc/[ddr]^{\rho_{\Gamma(\overline{P})}^{\Gamma(S')}}
&&
\Gamma(\overline{P} \otimes_{\mathcal{S}} S')
\ar[d]^{\Gamma(\pi_{\overline{P},S'}^{S'})}
\ar@/_1pc/[ddl]_(.4){\Gamma(\rho_{\overline{P}}^{S'})}\\
\Gamma(\overline{P}) \otimes_{\Gamma(S')} \Gamma(S')
\ar[dr]_{r_{\Gamma(\overline{P})}^{\Gamma(S')}}&
\text{\scriptsize{(def. of $r_{\Gamma(\overline{P})}^{\Gamma(S')}$) \; \;(def. of $\rho_P^{\Gamma(S')}$) \; \;  \; (def. of $r_{\overline{P}}^{S'}$)}}&
\Gamma(\overline{P} \otimes_{S'} S')
\ar[dl]^{\Gamma(r_{\overline{P}}^{S'})}\\
&\Gamma(\overline{P})&
}
}
\]
\end{proof}



\bibliography{WINART2}

\end{document}